\documentclass[reqno, 12pt]{amsart}
\usepackage{amsmath,amssymb,mathrsfs,amsthm,amsfonts,mathtools}
\usepackage[inline]{enumitem} 
\usepackage[usenames,dvipsnames]{xcolor}
\usepackage{graphicx} 
\usepackage[paper=letterpaper,margin=1in]{geometry}

\definecolor{hypercolor}{HTML}{003399}
\usepackage{hyperref}
 	
\hypersetup{colorlinks=true, linkcolor=hypercolor,citecolor=hypercolor}

\usepackage{newtxtext}

\numberwithin{equation}{section}
\allowdisplaybreaks

\newtheorem{thm}{Theorem}[section]
\newtheorem{lem}[thm]{Lemma}
\newtheorem{prop}[thm]{Proposition}
\newtheorem{cor}[thm]{Corollary}
\theoremstyle{definition}

\theoremstyle{remark}

\newcommand{\sg}{\mathcal{Q}}					
\newcommand{\sgsum}{\mathcal{R}}				
\newcommand{\sgg}{\mathcal{W}}					
\newcommand{\heatsg}{\mathcal{P}}				
\newcommand{\Jop}{\mathcal{J}}					
\newcommand{\jfn}{\mathsf{j}}
\newcommand{\Top}{\mathcal{T}}					

\newcommand{\Aop}{\mathcal{A}}
\newcommand{\Bop}{\mathcal{B}}
\newcommand{\Cop}{\mathcal{C}}
\newcommand{\Gop}{\mathcal{G}}
\newcommand{\Hop}{\mathcal{H}}

\newcommand{\pair}{\mathrm{Pair}}
\newcommand{\dgm}{\mathrm{Dgm}}

\newcommand{\textc}{\mathrm{c}}

\newcommand{\yc}{y_\mathrm{c}}

\newcommand{\vecu}{\vec{u}}
\newcommand{\vecv}{\vec{v}}
\newcommand{\vecalpha}{\vec{\alpha}}
\newcommand{\vecbeta}{\vec{\beta}}

\newcommand{\scrI}{\mathscr{I}}

\newcommand{\scrK}{\mathscr{K}}
\newcommand{\calI}{\mathcal{I}}
\newcommand{\calJ}{\mathcal{J}}
\newcommand{\calK}{\mathcal{K}}
\newcommand{\calL}{\mathcal{L}}

\newcommand{\type}{\operatorname{type}}
\newcommand{\hk}{p}								
\newcommand{\hkk}{p'}							
\newcommand{\logf}{\mathrm{Log}}				
\newcommand{\compl}{\mathrm{c}}					
\newcommand{\Wevent}{\Omega}
\renewcommand{\emptyset}{\varnothing}

\newcommand{\gmc}{\mathcal{G}}
\newcommand{\pspace}{\mathfrak{X}}				

\newcommand{\base}{\mu}							
\newcommand{\kernel}{\mathsf{k}}				
\newcommand{\noise}{\xi}
\newcommand{\Pathsp}{\Gamma}
\newcommand{\ppath}{\gamma}
\newcommand{\PM}{M}
\newcommand{\PMm}{M^{\prime}}
\newcommand{\inters}{\tau}						

\newcommand{\e}{\varepsilon}
\newcommand{\tilG}{\tilde{G}}
\newcommand{\tilg}{\tilde{g}}
\renewcommand{\aa}{a}
\newcommand{\bb}{b}
\newcommand{\inull}{\tilde{n}}

\newcommand{\Ebar}{\bar{E}}
\newcommand{\DZz}{\hk{}\hspace{-1pt}D\hspace{-1pt}Z\hspace{1pt}{}}
\newcommand{\DZ}{\hk{}\hspace{-1pt}D\hspace{-1pt}Z\hspace{1pt}{}}

\newcommand{\Cdot}{\,\raisebox{0.15ex}{\scalebox{.75}{$\bullet$}}\,}
\newcommand{\Ldot}{\,\raisebox{0.25ex}{\scalebox{.5}{$\blacktriangleleft$}}\,}
\newcommand{\Rdot}{\,\raisebox{0.25ex}{\scalebox{.5}{$\blacktriangleright$}}\,}

\newcommand{\N}{\mathbb{N}}
\newcommand{\R}{\mathbb{R}}
\newcommand{\Z}{\mathbb{Z}}
\newcommand{\Csp}{C}
\newcommand{\Cloc}{\hat{C}}

\newcommand{\Lsp}{L}
\newcommand{\Msp}{\mathcal{M}}
\newcommand{\filt}{\mathscr{F}}
\newcommand{\eps}{\varepsilon}

\newcommand{\norm}[1]{\Vert #1\Vert}

\newcommand{\NOrm}[1]{\Big\Vert #1\Big\Vert}
\newcommand{\normopm}[1]{\Vert #1\Vert_{\mathrm{op}/}}

\newcommand{\NOrmopm}[1]{\Big\Vert #1\Big\Vert_{\mathrm{op}/}}
\newcommand{\ip}[1]{\langle #1\rangle}
\newcommand{\Ip}[1]{\big\langle #1\big\rangle}

\newcommand{\cdott}{\hspace{1pt}\cdot\hspace{1pt}}

\newcommand{\E}{\mathbf{E}}
\newcommand{\EE}{\mathbb{E}}
\renewcommand{\P}{\mathbf{P}}
\newcommand{\PP}{\mathbb{P}}
\newcommand{\Var}{\mathbf{Var}}
\newcommand{\Varr}{\mathbb{V}\mathrm{ar}}
\newcommand{\ind}{1}
\newcommand{\img}{\mathbf{i}}
\newcommand{\sign}{\mathrm{sign}}					
\renewcommand{\d}{\mathrm{d}}		

\newcommand{\til}{\tilde}

\title[Log Log Fluctuations of the SHF]{Log Log Fluctuations of the Stochastic Heat Flow}
\author{Yu Gu}
\address[Yu Gu]{Department of Mathematics, University of Maryland, College Park, USA}

\author{Li-Cheng Tsai}
\address[Li-Cheng Tsai]{Department of Mathematics, University of Utah, USA}

\begin{document}
\begin{abstract}
We study the stochastic heat flow with constant initial data and analyze its spatial average on the scale of $\varepsilon\ll1$. We prove that the logarithm of the averaged process satisfies a pointwise central limit theorem: After being centered by $-\tfrac{1}{2}\log\log \varepsilon^{-1}$ and scaled down by $\sqrt{\log\log \varepsilon^{-1}}$, it converges in distribution to a standard Gaussian.
\end{abstract}

\maketitle

\section{Introduction}
\label{s.intro}

\subsection{Main results}
In this paper, we obtain the asymptotic pointwise fluctuations of the logarithm of the Stochastic Heat Flow (SHF) averaged over a small spatial region, for any fixed coupling constant $\theta\in\R$.
The SHF was first constructed in \cite{caravenna2023critical} as the finite dimensional distributional limit of discrete polymers in random environments.
In this paper we work with the axiomatic formulation of SHF in \cite{tsai2024stochastic}. 
Let $\Msp_+(\R^d)$ denote the space of positive locally finite measures on $\R^d$, equipped with the vague topology induced by continuous compactly supported functions on $\R^d$.
The SHF with coupling constant $\theta\in\R$ is the continuous $\Msp_+(\R^4)$-valued process $Z^{\theta}=Z^{\theta}_{s,t}$ in $s\leq t\in\R$ that is uniquely characterized by the axioms in \cite{tsai2024stochastic}, which we recall in Section~\ref{s.prelim.axioms}.
The existence of $Z^{\theta}$ under this formulation was also established in \cite{tsai2024stochastic}.
Let $\hk(s,x):=e^{-|x|^2/2s}/2\pi s$ denote the standard 2d heat kernel, fix any $\theta\in\R$, and consider
\begin{align}
	\label{e.h.Z}
	H_\e(y) :=\log Z_\e(y)\ ,
	&&
	Z_\e(y) := \int_{\R^4} Z^{\theta}_{0,1}(\d x, \d x')\,\hk(\e^2,y-x')\ ,
	\qquad
	x,x'\in\R^2\ .
\end{align}
\begin{thm}\label{t.main}
 There exists a sequence of deterministic constants $\alpha_\e\to 0$ such that
\begin{align}\label{e.mainresult}
	\frac{1}{\sqrt{\log\log\e^{-1}}}\Big(
		H_\e(0) + \frac{1+\alpha_\e}{2} \log\log\e^{-1}
	\Big)
	\Longrightarrow
	\text{standard Gaussian,}
    \quad
    \text{ as $\e\to 0$\ .}
\end{align}
\end{thm}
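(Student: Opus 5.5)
We propose to prove Theorem~\ref{t.main} by a multiscale decomposition that writes $H_\e(0)$ as an approximate sum of weakly dependent increments, one per dyadic-exponential scale, followed by a martingale CLT. Set $L:=\log\e^{-1}$ and consider the intermediate scales $\e_k:=e^{-e^{k}}$, $k=0,1,\dots,K$, with $K\approx\log L=\log\log\e^{-1}$. The scales are chosen this way because of the 2d critical structure. For the SHF the variance of a mollified average satisfies
\begin{align*}
	\Var\big[Z_\e(0)\big]\approx \log\log\e^{-1},
\end{align*}
which comes from the second-moment formula $\E[Z^\theta(\d x)Z^\theta(\d x')]$ and the kernel behaving like $\frac{1}{|x-x'|^2\log^2|x-x'|^{-1}}$. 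Each window of scales $[\e_{k+1},\e_k]$ therefore carries an $O(1)$ amount of variance, and the $\log\log$ arises as the number of windows. So we aim to show
\begin{align*}
	H_\e(0)\approx\sum_{k\le K} X_k,
	\qquad
	X_k:=\log\frac{Z_{\e_{k+1}}(0)}{Z_{\e_k}(0)} .
\end{align*}
We then want the $X_k$ to be approximately independent, each roughly $\log(1+\eta_k)$ with $\E\eta_k=0$ and $\E\eta_k^2=1+o(1)$. Expanding the logarithm gives mean $\approx-\tfrac12\E\eta_k^2$, hence the centering $-\tfrac{1+\alpha_\e}{2}\log\log\e^{-1}$, and total variance $\approx K$.

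The key steps, in order, are as follows.

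\emph{Step 1.} Using the Chapman--Kolmogorov and independence axioms of $Z^\theta$ from Section~\ref{s.prelim.axioms}, write $Z_{0,1}=Z_{0,1-\e_k^2}*Z_{1-\e_k^2,1}$. The ratio $Z_{\e_{k+1}}/Z_{\e_k}$ is then essentially determined by the flow on the time window $[1-\e_k^2,1-\e_{k+1}^2]$ near $0$, tilted by the measure $Z_{0,1-\e_k^2}$ on the spatial region of size $\e_k$. Because the windows use disjoint time intervals, the increments are independent conditionally on the past, which gives a martingale-difference structure with respect to the backward filtration.

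\emph{Step 2.} Compute the conditional second moments of $\eta_k:=Z_{\e_{k+1}}/Z_{\e_k}-1$. Use the explicit two-point SHF moment, whose kernel involves the Dickman-type function $\int_0^\infty \frac{t^{s-1}e^{\theta s}}{\Gamma(s)}\d s$. Show $\E[\eta_k^2\mid\filt_k]\to1$ in probability, and bound higher moments by the third or fourth moment estimates, which are finite at fixed $\theta$ for fine enough scales. The constants $\alpha_\e$ absorb the $o(1)$ corrections to the mean.

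\emph{Step 3.} Taylor expand $\log(1+\eta_k)=\eta_k-\tfrac12\eta_k^2+O(|\eta_k|^3)$ on the event $\{|\eta_k|\le\tfrac12\}$. Apply the martingale CLT to $\sum\eta_k/\sqrt K$ and a law of large numbers to $\sum(\eta_k^2-1)/K$.

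\emph{Step 4.} Control the boundary terms $\log Z_{\e_0}$ and $\log(Z_\e/Z_{\e_K})$. These are $O(1)$ in probability; positivity of the SHF is needed so that the logarithm is not too negative.

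We expect the main obstacle to be Step 3, specifically that $\eta_k$ is not small. With only $O(1)$ variance per window the linearization fails, and the log of a mean-one variable with variance $1$ does not have mean $-\tfrac12$. The first thing we would try is to refine the scales, using $\e_k=e^{-e^{k/M}}$ with large $M$. Then each $\eta_k$ has variance about $1/M$, so the Taylor remainder summed over $KM$ windows is $O(K/\sqrt M)$ in $L^1$ at worst, and better via the third moment being $O(M^{-3/2})$ per window. This requires third-moment bounds of the form $\E|\eta_k|^3\lesssim (\E\eta_k^2)^{3/2}$ uniformly. Establishing these, together with a lower-tail bound on $Z_{\e_k}$ that prevents $\eta_k$ from being near $-1$, is the technical core.
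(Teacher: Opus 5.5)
Your proposal has a genuine gap, and it sits where the paper does its real work.

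\textbf{The heuristic and Step~2.} The starting heuristic is already off. With flat initial data the two-point function of the SHF is logarithmically correlated: $\sg^{2,\theta}(t)1\lesssim 1+\logf$, cf.\ \eqref{e.sg.sgg2.bd}. Hence $\E Z_\e(0)^2\sim\log\e^{-1}$, not $\log\log\e^{-1}$. The profile $1/(|x-x'|^2\log^2|x-x'|^{-1})$ you use mirrors the small-time behaviour of the time kernel $\jfn$ (see \eqref{e.jfn.estimate}), not the spatial covariance. Only $\log Z_\e(0)$ has variance $\log\log\e^{-1}$, and the gap between the two is exactly intermittency: the second moment is carried by pair interactions spanning many scales, i.e.\ by rare events. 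This is why Step~2 cannot be carried out from ``the explicit two-point moment''. That formula gives unconditional moments, whereas you need the conditional variance of each increment given the rest of the flow. That conditional variance is a quadratic functional of a random, intermittent tilting measure, namely the polymer endpoint measure at scale $\e_k$. Showing that these conditional variances match their deterministic heuristic values, uniformly over a diverging number of windows, is the whole difficulty, and your plan does not address it. Likewise, moments of $\eta_k$ are moments of ratios and require negative moments. A uniform bound $\E|\eta_k|^3\lesssim(\E\eta_k^2)^{3/2}$ cannot be extracted from positive moments, which grow like $(\log\e^{-1})^{\binom n2}$.

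\textbf{Step~1 and the missing decoupling.} Step~1 has the same problem. $Z_{\e_{k+1}}(0)/Z_{\e_k}(0)$ depends on the whole flow on $[0,1]$, not on one time window, and the $X_k$ are not martingale differences for any filtration you have specified. Genuine martingale increments exist, e.g.\ from $t\mapsto\hk(\e^2)\Ldot Z_{0,t}\Rdot 1$, but their conditional laws again depend on the random endpoint measure.

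The missing idea is a quantitative decoupling. On the geometric times $t_i=\e^2\bb^{-2i}$, compare the true partition function with the product $\prod_i\hk_{i-1}\Ldot Z_i\Rdot 1$. This product consists of genuinely independent factors with \emph{deterministic} test functions. One must show their ratio stays in $[\tfrac12,\tfrac32]$ with high probability, even though both terms tend to $0$ and their difference has diverging second moment. The errors must not pile up over the diverging number of intervals. The paper does this in three pieces:
\begin{itemize}
\item the exact expansion of Proposition~\ref{p.expansion};
\item the error bounds $\E\,\DZ_{I}{}^{2}\le c^{|I|}(\log\bb^{-1})^{-(|I|-1)}(N-j_I)^{-2}$ of Corollary~\ref{c.DZ.bd};
\item the combinatorial resummation on $\Wevent$ in Proposition~\ref{p.decoupling.bd}.
\end{itemize}
The CLT is then applied only to the independent product (Lemma~\ref{l.clt}), where the moment bounds of Corollary~\ref{c.Wmom} are available.

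\textbf{Step~4.} This is not a matter of positivity. The coarse scales, where each window carries $O(1)$ variance, couple to the fine ones through a random measure. You need an $\e$-uniform lower-tail bound for $\hk(\e^2)\Ldot Z_{0,1}\Rdot 1$ relative to $\hk(\e^2)\Ldot Z_{0,t_{N-\ell}}\Rdot 1$, as in \eqref{e.lwbd}. The paper obtains this from the conditional GMC representation and Talagrand's concentration inequality, fed by the quenched overlap bound of Corollary~\ref{c.quenched}.
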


Utilizing the spatial translation invariance of $H_\e$, one immediately obtains the following corollary that says the ``typical'' value of $Z_\e$ is of order $1/\sqrt{\log\e^{-1}}$.
\begin{cor}\label{c.main}
 There exists $\beta_\eps<\beta_\eps'$ such that $\beta_\eps,\beta_\eps'\to\tfrac12$ as $\eps\to0$ and for
\begin{align}
D_{\e} := \Big\{ y\in\R^2\, \Big| \, 
        \frac{1}{(\log\e^{-1})^{\beta_\eps'}} 
        \leq Z_\e(y) 
        \leq \frac{1}{(\log\e^{-1})^{\beta_\eps}} 
    \Big\}
\end{align} 
and all bounded $D\subset\R^2$,
\begin{align}
	\E |D\cap D_{\e}^{\compl}|   \xlongrightarrow{\e} 0\ .
\end{align}
\end{cor}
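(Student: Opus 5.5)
We deduce Corollary~\ref{c.main} from Theorem~\ref{t.main} via translation invariance and Fubini. Set $L_\e:=\log\log\e^{-1}$. The natural choice is
\begin{align*}
	\beta_\e := \frac{1+\alpha_\e}{2} - \delta_\e\ ,
	\qquad
	\beta'_\e := \frac{1+\alpha_\e}{2} + \delta_\e\ ,
\end{align*}
where $\delta_\e\to0$ is chosen so that $\delta_\e L_\e/\sqrt{L_\e}=\delta_\e\sqrt{L_\e}\to\infty$, for instance $\delta_\e:=L_\e^{-1/4}$. Since $\alpha_\e\to0$, both exponents tend to $\tfrac12$ and $\beta_\e<\beta'_\e$. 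Taking logarithms, and using $\log\big((\log\e^{-1})^{\beta}\big)=\beta L_\e$, the condition $y\in D_\e$ is equivalent to
\begin{align*}
	\Big| \frac{H_\e(y)+\frac{1+\alpha_\e}{2}L_\e}{\sqrt{L_\e}} \Big| \leq \delta_\e\sqrt{L_\e}\ .
\end{align*}

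Next, by Fubini (measurability of $(y,\omega)\mapsto Z_\e(y)$ holds because $Z_\e$ is continuous in $y$, being a heat kernel smoothing),
\begin{align*}
	\E|D\cap D_\e^{\compl}| = \int_D \P\big[y\notin D_\e\big]\,\d y\ .
\end{align*}
The axioms of \cite{tsai2024stochastic}, recalled in Section~\ref{s.prelim.axioms}, include spatial shift invariance of the SHF: $Z^\theta_{0,1}(\d x+y,\d x'+y)$ has the same law as $Z^\theta_{0,1}(\d x,\d x')$. With constant initial data this implies $Z_\e(y)\overset{d}{=}Z_\e(0)$. Hence the integrand equals $\P[0\notin D_\e]$ for every $y$, and so $\E|D\cap D_\e^{\compl}|=|D|\,\P[0\notin D_\e]$.

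Finally, let $X_\e$ be the left-hand side of \eqref{e.mainresult}. For any fixed $K>0$ we have $\delta_\e\sqrt{L_\e}>K$ for all small $\e$, so $\limsup_{\e\to0}\P[|X_\e|>\delta_\e\sqrt{L_\e}]\le\limsup_{\e\to0}\P[|X_\e|>K]=\P[|N|>K]$. Here $N$ is standard Gaussian, and we used the Portmanteau theorem together with the fact that the set $\{|x|\ge K\}$ is closed with a Gaussian-null boundary. Letting $K\to\infty$ gives $\P[0\notin D_\e]\to0$, which proves the claim.

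There is no real obstacle here. The only care needed is in choosing $\delta_\e$: it must tend to zero slowly enough that $\delta_\e\sqrt{L_\e}\to\infty$, while still ensuring $\beta_\e,\beta'_\e\to\tfrac12$. We also need the shift-invariance axiom as stated in Section~\ref{s.prelim.axioms}.
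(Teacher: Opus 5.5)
Your proof is correct and follows the paper's argument. The paper takes $\delta_\e=(\log\log\e^{-1})^{-1/3}$ where you take $L_\e^{-1/4}$, rewrites $D_\e$ as a window of half-width $(\log\log\e^{-1})^{2/3}$ around the centered $H_\e$, and concludes via Fubini, translation invariance and Theorem~\ref{t.main}. One small correction: spatial shift invariance is not among the axioms listed in Section~\ref{s.prelim.axioms}. It does follow from them, because the shifted flow satisfies the same translation-invariant axioms, and those axioms characterize the law uniquely.
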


\subsection{Challenge caused by intermittency} 
%
The main challenge in proving Theorem~\ref{t.main} is that it requires obtaining the typical behaviors of $Z_\eps(0)$ in the presence of intermittency.
It is well-known that $\E Z_\eps(0)=1$ and $\E Z_\eps(0)^2\sim \log \eps^{-1}$. 
For an analog of $Z_\e(0)$, it was further shown in \cite{liu2024moments} that $\E (Z_\eps(0))^n \sim (\log \eps^{-1})^{\binom{n}{2}}$, for every fixed $n\in\N$.
These moment asymptotics suggest that $Z_\eps(0)$ is typically small, while the moments are actually dominated by rare and anomalously large values of $Z_\eps(0)$, which is known as intermittency.
Positive-integer moments have been and remain a major tool for understanding the SHF and related models. 
In the presence of intermittency, however, it becomes significantly more challenging to extract typical behaviors of $Z_\eps(0)$ from these moments.

To put things into perspective, we note that a series of works have established the convergence of the analogs of $Z_\e(0)$ to lognormal distributions, see \cite{caravenna17,dunlap2022forward,caravenna2022gaussian,caravenna2025singularity} for results on the directed polymers, the mollified stochastic heat equation, and the averaged SHF in certain weak disorder regimes. In all those works, the analogs of $Z_\e(0)$ converge in law to lognormal $\exp(\sigma N(0,1)-\sigma^2/2)$ with some $\sigma\in(0,\infty)$. In particular, the limit of $Z_\eps(0)$ is of order $1$, and the second moments remain bounded as $\eps\to0$.
On the other hand, the current paper deals with the genuinely critical regime and proves 
that,  in the sense of \eqref{e.mainresult},
\begin{align}
	\label{e.biglognormal}
	Z_\e(0) \text{ approximates } \exp\Big( \sigma_{\e} N(0,1) - \frac{1}{2}\sigma^2_{\e} \Big) \text{ in law,}
	&&
	\sigma^2_\e := \log\log\e^{-1}\to\infty\ . 
\end{align}
Here $Z_\e(0)$ decays to $0$ at the rate of $1/\sqrt{\log\e^{-1}}$ and the second moment of $Z_\e(0)$ tends to $\infty$ at the rate of $\log\e^{-1}$.

Intermittency has significant implications in the proof.
To illustrate this point, 
for some $\bb\ll1$, consider the exponentially scaled times $\{t_i:=\e^{2}\bb^{-2i}\}_{i=0,\ldots,N}$ which interpolate between $\eps^2$ and $1$. 
A key property we establish in the paper is that $Z_\e(0)$ approximately decouples under the exponential timescale. While the ``decoupling'' was already observed a few times in previous works, in those weak-disorder regimes that are non-intermittent, the approximate decoupling holds because the second moment of (the analogues of)
\begin{align}
	\label{e.Z-Zdecoupled}
	Z_\e(0) - \prod_{i=1}^{N} \int_{\R^4} Z^{\theta}_{t_{i-1},t_{i}}(\d x, \d x') \hk(t_{i-1}, x) 
\end{align}
converges to $0$, so that $Z_\eps(0)$ can be approximated by the product of independent random variables. 
See e.g.\ \cite[Theorem~2.2]{cosco2025central} where this was explicitly shown.
In the intermittent regime, however, the second moment of \eqref{e.Z-Zdecoupled} \emph{diverges to $\infty$} while both terms there \emph{decay to $0$}.
Even though the approximate decoupling still holds, it holds for a very different reason.
In fact, since both terms in \eqref{e.Z-Zdecoupled} decay to 0, we need to consider their \emph{ratio}
\begin{align*}
	\frac{Z_\e(0)}{\prod_{i=1}^{N} \int_{\R^4} Z^{\theta}_{t_{i-1},t_{i}}(\d x, \d x') \hk(t_{i-1}, x)}
\end{align*}
(more precisely a truncated version of it; see \eqref{e.step1.1}) and bound it away from $0$ and $\infty$.

What makes the intermittent regime particularly challenging is that we need to work with a divergent number of intervals that ultimately leads to the divergence of the mean and variance in \eqref{e.mainresult}. 
In previous works, the analogue of $\bb$ was defined as $\e^{1/N}$ so the exponential time scale reads as $\eps^q$ with the variable  $q=\tfrac{2N-2i}{N}\in [0,2)$, and the general strategy there was to send $\e\to 0$ first while keeping $N$ fixed then send $N\to\infty$ later.
In the current work, we (have to) send $\e\to 0$ first and $\bb\to 0$ later, which requires working with $N=\lfloor \log\e^{-1}/\log\bb^{-1}\rfloor$ that diverges in $\e$. From a probabilistic point of view, the decoupling can be understood as follows. Take any $s<t$, the semigroup property of SHF enables us to write $Z^\theta_{0,t}=Z^\theta_{0,s} \Cdot Z^\theta_{s,t}$; see \eqref{e.Cdot}. One can view $Z^\theta_{0,s}$ as the point-to-point partition function of a directed polymer on $[0,s]$ and the decoupling we are looking for essentially amounts to approximating the point-to-point partition function by the point-to-line partition function multiplied by the heat kernel $p(s,\cdot)$: \[
Z^\theta_{0,s} (\d x, \d y)\approx \int_{x'\in\R^2} Z_{0,s}^\theta(\d x, \d x') \ p(s,x-y)\,\d y\ .
\]Although the error in such approximations can be controlled for each $[s,t]=[t_{i-1},t_i]$, one of the main technical challenges in proving Theorem~\ref{t.main} is to ensure that such errors do not pile up as the number of intervals goes to infinity.


\subsection{Toward KPZ}
Another motivation of our study is to understand the scaling limit of the height functions of related random growth models or the free energy of directed polymers.
Indeed, at an informal level, the logarithm of the Stochastic Heat Equation (SHE) gives the Kardar--Parisi--Zhang (KPZ) equation that describes random growth.
Since the SHF can be obtained as the scaling limit of the 2d mollified SHE, it is interesting to consider taking the logarithm of the SHF and to study the resulting process.
However, since the SHF is genuinely measure-valued, as shown in Theorem 1.1 of \cite{caravenna2025singularity} and quantified in Corollary~\ref{c.main} of the current paper, even just taking the logarithm of the SHF is subtle.
Theorem~\ref{t.main} of the current paper can be viewed as a result toward the 2d KPZ equation beyond the subcritical regime, done through taking the logarithm of spatially averaged SHF and studying the pointwise fluctuations.
An interesting next question concerns the fluctuations as a field, namely a random Schwartz distribution.
In the subcritical regime, the fluctuations as a field of the 2d KPZ equation have been shown to converge to the Edwards--Wilkinson Gaussian limit; see \cite{chatterjee18,caravenna18a,gu2020,nakajima2023fluctuations}.
In this regime, understanding the pointwise fluctuations plays a crucial role in obtaining the field convergence.
We hope that the current paper could similarly offer an entry point for understanding the 2d KPZ equation beyond the subcritical regime.

\subsection{Some related literature}
\label{s.intro.liter}

In the subcritical/weak disorder regime, the lognormal fluctuations of the partition function of directed polymers in $1+2$ random environments and the solutions to the mollified 2d stochastic heat equation were first derived in \cite{caravenna17}, and it was further shown there that, at and beyond the critical regime, the same quantity goes to zero in probability.  The work of  \cite{dunlap2022forward,dunlap2023forward2} generalized the results in the subcritical setting to the model of semilinear stochastic heat equations and derived distributional limits in the form of a forward-backward SDE. Later a simplified proof of lognormality was given in \cite{caravenna2022gaussian} from the perspective of chaos expansion. Most relevant to our work is \cite{cosco2025central}, where a short and simple proof of the lognormal fluctuations was given by establishing the decoupling in \eqref{e.Z-Zdecoupled}. 

A quasi-critical regime was introduced in \cite{caravenna2023quasi}, which in some sense interpolates between the subcritical and critical regimes. For directed polymers, lognormal fluctuations were established in  \cite{caravenna2025singularity}, in the quasi-critical and critical regimes, where the inverse temperature (resp.\ the coupling constant $\theta$) was tuned in a way so that the partition function stays bounded in $L^2$. 
It was further shown in \cite{caravenna2025singularity} that SHF is singular with respect to the Lebesgue measure and the convergence of $Z_\eps(y)\to0$ for Lebesgue a.e. $y\in \R^2$.

The double $\log$ appearing in the statement of Theorem~\ref{t.main} comes from a combination of the logarithmic correlation of SHF together with the nonlinear transformation $H_\eps=\log Z_\eps$. Incidentally, a similar phenomenon has been observed in the study of intermittent behaviors of a passive tracer in the curl of 2d Gaussian free field \cite{morfe2025critical}.



\subsection{Outline of proof}
\label{s.intro.proof}
First, let us state the result that we actually prove as Theorem~\ref{t.main.}, which is slightly stronger than Theorem~\ref{t.main}.
To streamline notation, we use the time-reversal invariance of the SHF to move the heat kernel in \eqref{e.h.Z} to the starting time, and further change the starting time from $0$ to $\e^2$.
The latter causes no loss of generality because $[\e^2,1]$ can be scaled back to $[0,1]$ by the invariance of the SHF (see \cite[Corollary~1.6]{tsai2024stochastic}) at the expense of changing $\theta$ slightly, and we allow $\theta$ to vary in any fixed bounded interval in Theorem~\ref{t.main.}.
\begin{thm}\label{t.main.}
Allowing $\theta$ to depend on $\e$ with $|\theta|=|\theta_\e|\leq c_0<\infty$, we have
\begin{align}
	\label{e.t.main.}
	\frac{1}{\sqrt{\log\log\e^{-1}}}\Big(
		\log \int_{\R^4} Z^{\theta}_{\e^2,1}(\d x, \d x')\,\hk(\e^2,x) + \frac{1+\alpha_\eps}{2} \log\log\e^{-1}
	\Big)
	\Longrightarrow
	\text{standard Gaussian}.
\end{align}
\end{thm}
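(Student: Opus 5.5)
Write $\mathcal Z_\e:=\int_{\R^4} Z^{\theta}_{\e^2,1}(\d x,\d x')\,\hk(\e^2,x)$. The plan is to decompose $\log \mathcal Z_\e$ along the exponential time scale $t_i=\e^2\bb^{-2i}$, $i=0,\dots,N$, with $N=\lfloor \log\e^{-1}/\log\bb^{-1}\rfloor$. On this scale $\log \mathcal Z_\e$ should be approximately a sum of independent, identically distributed pieces
\begin{align*}
	X_i:=\log \int_{\R^4} Z^{\theta}_{t_{i-1},t_i}(\d x,\d x')\,\hk(t_{i-1},x),
\end{align*}
plus an error that is $o(\sqrt{\log\log\e^{-1}})$ in probability. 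The classical CLT over $N\to\infty$ summands alone is not enough, because we need mean $-\tfrac12\log\log\e^{-1}$ and variance $\log\log\e^{-1}$. The variance of each $X_i$ must therefore be of order $1/\log\e^{-1}$, and more precisely the variance of $X_i$ for the block $[t_{i-1},t_i]$ must be $\approx \log(t_i/t_{i-1})/\log(t_i/\e^2)$-like. Summing gives $\sum_i 1/i\approx \log N\approx\log\log\e^{-1}$.

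To get this I would use scale invariance. By \cite[Corollary~1.6]{tsai2024stochastic}, the block $[t_{i-1},t_i]$ rescaled to $[\bb^2,1]$ is an SHF with shifted coupling $\theta_i=\theta+\log(t_{i-1})$-type shift. In the critical window, the effective coupling on scale $t_{i-1}$ starting from $\e^2$ averaging is weak, of size $\sim 1/(i\log\bb^{-1})$. Equivalently, the second moment of the block variable is $1+O(1/i)$. That is the weak-disorder regime, where the known lognormal results apply (\cite{caravenna2022gaussian,cosco2025central}). Hence $X_i\approx \sigma_i N_i-\sigma_i^2/2$ with $\sigma_i^2\approx 1/i$, up to uniformly controlled errors in $i$. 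Lindeberg for the triangular array then gives the Gaussian limit with the stated centering. The constants $\alpha_\e$ absorb the $\bb$-dependent discrepancies and the limit $\bb\to0$ taken after $\e\to0$.

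The decoupling step is the main work. The semigroup property \eqref{e.Cdot} writes $Z^\theta_{\e^2,1}$ as a composition of the block flows. I would show that the truncated ratio
\begin{align*}
	\frac{\mathcal Z_\e}{\prod_{i=1}^N \int Z^{\theta}_{t_{i-1},t_i}(\d x,\d x')\hk(t_{i-1},x)}
\end{align*}
stays tight away from $0$ and $\infty$, in the sense that its logarithm is $O(1)\cdot o(\sqrt{\log N})$. Do this inductively in $i$. At each step, replace the point-to-point kernel at the junction $t_i$ by point-to-line times $\hk(t_i,\cdot)$; the measure from earlier blocks is spread on scale $\sqrt{t_{i-1}}\ll\sqrt{t_i}$, so the replacement incurs a multiplicative error $1+\delta_i$. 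I would control $\delta_i$ by second-moment computations of the normalized (tilted) measures, using the explicit two-point moment of the SHF to get $\E\delta_i^2\lesssim \bb^{c}/i$ or similar. The key point is that the errors must be summable over $i\le N$, or at least have sum $o(\sqrt{\log N})$.

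The obstacle I expect is preventing pile-up. The denominators are small, so the second moments of unnormalized differences blow up. My first attempt is to work with the normalized polymer measure after each block, conditioned on a high-probability good event where the block partition function is at least $i^{-C}$. On that event I would bound the relative error via a change of measure. I would also use independence across blocks to make the log-errors a martingale, whose variance sums to $O(\bb^c\log N)$. Sending $\bb\to0$ then yields the claim.
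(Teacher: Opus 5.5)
There is a genuine gap, and it starts with the variance profile, which you have reversed. The variance contributed by $[t_{i-1},t_i]$ is of order $\log(t_i/t_{i-1})/\log t_i^{-1}\approx 1/(N-i)$, not $\log(t_i/t_{i-1})/\log(t_i/\e^2)\approx 1/i$; this is \eqref{e.W.2ndmom}, $(N-i)\,\E(\hk_{i-1}\Ldot W_i\Rdot 1)^2\to 1$. The coupling is weak near the microscopic scale $\e^2$ and of order one near the macroscopic time $1$. The sum is still $\approx\log N$, so your centering survives, but the difficulty sits at the opposite end from where you put it.

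The decoupling estimates degrade as $j\to N$. The key bound is $\E\,\DZ_I^2\le c^{|I|}(\log\bb^{-1})^{-(|I|-1)}(N-j_I)^{-2}$, and $\P[\Wevent_i^\compl]\lesssim (N-i)^{-2}$. So decoupling can only be run up to $t_{N-\ell}$, with error $O((\ell\log\bb^{-1})^{-1/2})$, sending $\ell\to\infty$ after $\e\to 0$. The remaining stretch $[t_{N-\ell},1]$ is in genuinely order-one critical disorder. There the ratio $\hk(\e^2)\Ldot Z_{0,1}\Rdot 1/\hk(\e^2)\Ldot Z_{0,t_{N-\ell}}\Rdot 1$ is not close to $1$, and you must still show its logarithm is $o(\sqrt{\log\log\e^{-1}})$.

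The upper side is just Markov's inequality; the lower tail is the crux, and nothing in your plan produces it. Second-moment bounds only control upward deviations. Moreover, your ``good event where the block partition function is at least $i^{-C}$'' having high probability is itself a lower-tail statement. The paper obtains the lower tail from the conditional GMC identity $\PM^{\theta}=\gmc(\PM^{\theta-\aa},\sqrt{\aa}\noise)$ and Talagrand's Gaussian concentration, following Moreno-Flores. The quenched overlap ratios $R,R'$ there are controlled by Corollary~\ref{c.quenched}. That corollary in turn requires rerunning the decoupling estimates with the test functions $\hk(t_{N-\ell}r^2,\cdot-y)$ instead of $1$.

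The bulk mechanism you sketch also does not match what works. The error from replacing the kernel at $t_i$ is not a local multiplicative factor $1+\delta_i$ whose logarithms form a martingale. It is $D_i$ composed with the future flow, and it interacts with later replacements through chains $D_m\Cdot\cdots\Cdot D_{n-1}\Cdot Z_n$.

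The paper instead writes the ratio exactly as a sum over families of disjoint index intervals (Proposition~\ref{p.expansion}). It then bounds the $L^2$ norm of the whole sum on $\Wevent$ using:
\begin{itemize}
\item independence across blocks and a connected-component decomposition;
\item a gain of only $1/\log\bb^{-1}$ per extra link, not $\bb^c$;
\item the cancellation $\hk_{i-1}\Ldot\E D_i=0$ for unpaired intervals;
\item an integer-partition count.
\end{itemize}

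Finally, the weak-disorder lognormal theorems you cite are asymptotic limit statements and give no uniform control over a triangular array of blocks. What Lindeberg actually needs is the moment bound $\E(\hk_{i-1}\Ldot W_i\Rdot 1)^n\lesssim (N-i)^{-\lceil n/2\rceil}$, from the $n$-point delta-Bose formula. You also need the truncation on $\Wevent_i$ to expand the logarithm.
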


We will work with the exponentially scaled times
\begin{align}
	\label{e.ti}
	&t_i = t_{i,N,\bb} := \e^{2}\bb^{-2i}\ ,
	\
	i=0,\ldots, N-1\ ,
	&&
	N = N_{\e,\bb} := \lfloor \log\e^{-1}/\log \bb^{-1}\rfloor\ ,
\end{align}
The last few factors in \eqref{e.Z-Zdecoupled} behave poorly, so those time intervals need separate treatment.
To this end, we take an $\ell$, consider $[t_{i-1},t_{i}]$ for $i=1,\ldots,N-\ell$ and separately $[t_{N-\ell},1]$.
We will send $\e\to 0$ (whence $N\to\infty$) first, and $(\ell,\bb)\to(\infty,0)$ later.
Accordingly, our proof of Theorem~\ref{t.main} consists of two steps.
The first step deals with the intervals $[t_{i-1},t_{i}]$ for $i\leq N-\ell$, and the second step deals with $[t_{N-\ell},1]$.

In the first step of the proof, we show that
\begin{align}
	\label{e.step1.1}
	\frac{1}{2} \leq
	\frac{\int_{\R^4} Z^{\theta}_{\e^2,t_{N-\ell}}(\d x, \d x')\,\hk(\e^2,x)}{\prod_{i=1}^{N-\ell} \int_{\R^4} Z^{\theta}_{t_{i-1},t_{i}}(\d x, \d x') \hk(t_{i-1}, x)}
	\leq \frac{3}{2}\ 
\end{align}
with high probability, as $\ell\to\infty$ uniformly in $\e\leq 1/2$ and $\bb\leq 1/c$, and that 
\begin{align}
	\label{e.step1.2}
	\frac{1}{\sqrt{\log\log\e^{-1}}} \Big( 
		\log \text{ denominator of \eqref{e.step1.1} }
		+ \frac{1+o(1)}{2}\log\log\e^{-1}
	\Big)
	\Longrightarrow \text{ standard Gaussian.}
\end{align}
Given that the denominator of \eqref{e.step1.1} is a product of independent variables, the statement \eqref{e.step1.2} can be proven by the classical central limit theorem with the aid of suitable moment estimates.
We do this in Lemma~\ref{l.clt} and Appendix~\ref{s.a.Wmom}.
On the other hand, proving \eqref{e.step1.1} makes up the bulk of the entire paper.
To prove it, we start by developing an exact expansion of the fraction in \eqref{e.step1.1} in Proposition~\ref{p.expansion} and spend Sections~\ref{s.boundingerror}--\ref{s.decoupling} bounding the error terms in the expansion.

In the second step of the proof, we show that for $r'_{\ell,\bb},r_{\ell,\bb}\to\infty$ fast enough as $(\ell,\bb)\to(\infty, 0)$,
\begin{align}
	\label{e.step2}
	\P\Big[
		e^{-r_{\ell,\bb}} \leq
		\frac{\int_{\R^4} Z^{\theta}_{\e^2,t_{N-\ell}}(\d x, \d x')\,\hk(\e^2,x)}{\int_{\R^4} Z^{\theta}_{\e^2,1}(\d x, \d x')\,\hk(\e^2,x)}
		\leq e^{r'_{\ell,\bb}}
	\Big] 
	\longrightarrow 1
\end{align}
uniformly in $\e\leq 1/2$.
The key step is to derive a lower-tail bound on the fraction in \eqref{e.step2}.
To this end, we invoke the conditional Gaussian Multiplicative Chaos (GMC) structure from \cite{clark2025conditional} and apply the argument in \cite{morenoflores2014positivity}, which was based on (one of) Talagrand's Gaussian concentration inequalities, to the conditional GMC.
Doing this requires some bounds that we developed in Sections~\ref{s.boundingerror}--\ref{s.decoupling}.
We note that a lower-tail bound of (a variant of) $Z_\e(0)$ has been obtained in \cite{nakashima2025upper}.

The precise versions of \eqref{e.step1.1}--\eqref{e.step2} will be given in Lemma~\ref{l.clt}, Corollary~\ref{c.clt}, and \eqref{e.upbd}--\eqref{e.lwbd} respectively.
Indeed, combining them gives Theorem~\ref{t.main.}.

\subsection*{Outline of paper}
In Section~\ref{s.prelim}, we recall the axiomatic formulation of \cite{tsai2024stochastic}, prepare a few tools, and develop an expansion of the quantity of interest $\int_{\R^4} Z^{\theta}_{\e^2,1}(\d x, \d x')\hk(\e^2,x) $. 
Section~\ref{s.boundingerror} is devoted to bounding the second moment of an expression involved in the expansion.
Based on the bound, in Section~\ref{s.decoupling} we bound all error terms together in the expansion.
This completes the first step stated in Section~\ref{s.intro.proof}.
The second step is carried out in Section~\ref{s.last.piece} by establishing certain comparison bounds.
Some technical steps are placed in the appendices to streamline the presentation.

\subsection*{Acknowledgment}
We thank Cl\'{e}ment Cosco for pointing us to the paper \cite{cosco2025central} and the role of the semigroup property in that paper. 
The research of Y.G.\ is partially supported by the National Science Foundation under grant no.\ DMS-2203014.
The research of LCT is partially supported by the NSF through DMS-2243112 and the Alfred P Sloan Foundation
through the Sloan Research Fellowship FG-2022-19308.

\section{Axiomatic formulation, tools, decoupling expansion}
\label{s.prelim}

\subsection{Axiomatic formulation of SHF}
\label{s.prelim.axioms}
We recall the axiomatic formulation of the SHF from \cite{tsai2024stochastic} that we will work with.
Deferring the definition of the product $\Cdot$ and the function $\sg^{n,\theta}$, we recall that by \cite[Theorem~1.3]{tsai2024stochastic}, the SHF with coupling constant $\theta\in\R$ is uniquely characterized by the following Axioms.
\begin{enumerate}
\item \label{d.shf.}
$Z^{\theta}=Z^{\theta}_{s,t}$ is an $\Msp_+(\R^4)$-valued, continuous process in $s\leq t\in\R$.
\item \label{d.shf.ck}
For all $s< t< u$, $Z^{\theta}_{s,t}\Cdot Z^{\theta}_{t,u}=Z^{\theta}_{s,u}$.
\item \label{d.shf.inde}
For all $s< t< u$, $Z^{\theta}_{s,t}$ and $Z^{\theta}_{t,u}$ are independent.
\item \label{d.shf.mome}
For $n=1,\ldots, 4$, $s<t$, and $x=(x_1,\ldots,x_n),x'=(x'_1,\ldots,x'_n)\in\R^{2n}$,
\begin{align}
    \E\, \bigotimes_{i=1}^n Z^{\theta}_{s,t}(\d x_i,\d x'_i)
    =
    \d x\, \d x' \, \sg^{n,\theta}(t-s,x,x')\ .
\end{align}
\end{enumerate}
The existence of $Z^{\theta}$ under this formulation was established in \cite[Proposition~1.5]{tsai2024stochastic}.

The product $\Cdot$ was introduced in \cite{clark2024continuum} and reads as follows
\begin{align}
	\label{e.Cdot}
	Z^{\theta}_{s,t}\Cdot Z^{\theta}_{t,u}
	&:=
	\lim_{\delta\to 0}
	Z^{\theta}_{s,t}\Cdot_{\delta} Z^{\theta}_{t,u}\ ,
\\
	\label{e.Cdot.}
	\big( Z^{\theta}_{s,t}\Cdot_{\delta} Z^{\theta}_{t,u} \big)(\d x, \d x')
	&:=
	\int_{\R^{4}} Z^{\theta}_{s,t}(\d x, \d y)\, \hk(\delta^2,y-y')\, Z^{\theta}_{t,u}(\d y', \d x')\ .
\end{align}
The work \cite{clark2024continuum} showed that \eqref{e.Cdot.} converges vaguely a.s.\ and in $\Lsp^2(\P)$, (more precisely a slight variant of \eqref{e.Cdot.} where the two $t$s on the right hand side are respectively replaced by $t_\delta,t'_{\delta}$ with $t_{\delta}\leq t\leq t'_{\delta}$ and $t'_{\delta}-t_{\delta}=\delta$).
Later \cite{tsai2024stochastic} showed that \eqref{e.Cdot.} converges vaguely in $\Lsp^{n}(\P)$ for all $n<\infty$ and the heat kernel $\hk$ in \eqref{e.Cdot.} can be replaced by a general class of functions.

The function $\sg^{n,\theta}(t,x,x')$ is the integral kernel of the 2d delta-Bose semigroup $\sg^{n,\theta}(t)$, a strongly continuous semigroup on $\Lsp^2(\R^{2n})$.
Its explicit formula for $n\in\N$ was obtained in \cite[Theorem~1.1]{gu2021moments} based on \cite{rajeev99,dimock04}.
For $n=1$, we simply have $\sg^{1,\theta}(t)=\hk(t)$.
The $n=2$ formula dates back to \cite{albeverio1995fundamental,bertini98} and reads as follows. 
Letting
\begin{align}
	\label{e.jfn}
	\jfn^{\theta}(t)
	&:=
	\int_{0}^\infty \d a \, \frac{t^{a-1}e^{\theta a}}{\Gamma(a)}\ ,
\\
	\begin{split}
	\label{e.sgg2}	
	\sgg^{2,\theta}(t,x,x')
 	&:=
 	\int_{0<s<s'<t} \d s \d s' \int_{\R^4} \d y \d y'\,
 	\prod_{j=1,2} \hk(s,x_j-y) 
 \\
 	&\qquad\cdot  
 	4\pi \jfn^{\theta}(s'-s) \hk(\tfrac{s'-s}{2},y-y') \prod_{j=1,2} \hk(t-s',y'-x'_j)\ ,
	\end{split}
\end{align}
we have that
\begin{align}
	\label{e.sg2}
    \sg^{2,\theta}(t)
    =
    \hk(t)^{\otimes 2} + \sgg^{2,\theta}(t)\ .
\end{align}
For the bulk of this paper, we will only use the formulas for $n=1,2$.
Only in Appendix~\ref{s.a.Wmom} do we use the general $n$ formula, and we will recall it there.

\subsection{Notation and tools}
\label{s.prelim.tools}
In this paper, we adopt notation that we hope is suggestive for writing the action of measure-valued processes on test functions such as
\begin{align}
	\int_{\R^4} Z^{\theta}_{s,t}(\d x, \d x')\, f(x) g(x')
	:=
	f \Ldot Z^{\theta}_{s,t} \Rdot g\ ,
\end{align}
and similarly for products of them such as $ (f\Ldot Z^{\theta}_{s,t})^{\otimes 2} \Rdot h$, where $h=h(x_1,x_2)$, $x_1,x_2\in\R^2$.
The left/right solid arrow points to the test function at the earlier/later time.
Together with the semigroup property in Axiom~\ref{d.shf.ck}, we have identities such as
\begin{align}
	\label{e.dot.ex}
	f \Ldot Z^{\theta}_{s,u} \Rdot g
	=
	f \Ldot Z^{\theta}_{s,t} \Cdot Z^{\theta}_{t,u} \Rdot g\ .
\end{align}

We will often decompose $Z^{\theta}$ into its mean and mean-zero part.
Since $\sg^{1,\theta}(t)=\hk(t)$, letting $\heatsg_{s,t}=\heatsg_{s,t}(\d x, \d x'):= \d x \d x'\, \hk(t-s,x-x')$, we have that $\E Z^{\theta}=\heatsg$.
Accordingly, we write $W^{\theta} := Z^{\theta} - \heatsg$ for the mean-zero part of $Z^{\theta}$.
By definition, $\E W^{\theta}=0$, and it is readily checked from \eqref{e.sgg2} that $\E W^{\theta}_{s,t}{}^{\otimes 2} = \d x\, \d x'\, \sgg^{2,\theta}(t-s,x,x')$.
As said, we will mostly work with the $n=1,2$ moments, but we also mention that the $n$th moment of $W^{\theta}$  permits an explicit formula
\begin{align}
	\label{e.Wmomformula}
	\E\, W^{\theta}_{s,t}{}^{\otimes n} (\d x, \d x')
	:=
	\E\, \bigotimes_{i=1}^n W^{\theta}_{s,t}(\d x_i,\d x'_i)
	=
	\d x\, \d x'\, \sgg^{n,\theta}(t-s,x,x')\ ,
\end{align}
where the expression of $\sgg^{n,\theta}$ is recalled in \eqref{e.sgg}.

Next, we note that upon taking expectation, the products $\Cdot,\Ldot,\Rdot$ translate into operator product and inner product.
Write $\ip{\cdott,\cdott}$ for the inner product on $\Lsp^2(\R^d, \d x)$.
For $f,g\in \Lsp^2(\R^{2n})$, 
\begin{align}
	\label{e.dot.underE}
	\E \, f \Ldot (Z^{\theta}_{s,t} \Cdot W^{\theta}_{t, u})^{\otimes n} \Rdot g
	=
	\Ip{ f\, , \sg^{n,\theta}(t-s)\, \sgg^{n,\theta}(u-t)\, g }\ ,
\end{align}
as can be verified from Axiom~\ref{d.shf.mome}, \eqref{e.Cdot}, and \eqref{e.Wmomformula}, and similar relations hold for products with more terms.

The function $\jfn^{\theta}(t)$ in \eqref{e.jfn} approximates $t^{-1}(\log t+\theta)^{-2}$ for small $t$, more precisely
\begin{align}
	\label{e.jfn.estimate}
	\frac{(1-\epsilon_-(c_0,t))}{t\,((\log t+ \theta)_-^2+1)}
	\leq
	\jfn^{\theta}(t) 
	\leq 
	\frac{(1+\epsilon_+(c_0,t))}{t\,((\log t+ \theta)_-^2+1)}\ ,
	\qquad
	\theta\leq c_0\, ,\ t\in(0,1]\ ,
\end{align}
where $\epsilon_\pm(c_0,\cdott)$ are bounded and satisfy $\epsilon_{\pm}(c_0,t)\to 0$ as $t\to 0$.
In particular,
\begin{align}
	\label{e.jfn.intbd}
	\int_0^t \d s\, \jfn^{\theta}(s)
	\leq
	\frac{c(c_0)}{(\log t+ \theta)_-+1}\ ,
	\qquad
	t\in(0,1]\ , \ \theta\leq c_0\ .
\end{align}
The estimate \eqref{e.jfn.estimate} can be obtained for example from the following inverse Laplace formula.
Note that the Laplace transform of $\jfn^{\theta}$ gives $\int_{0}^\infty \d t\, e^{zt}\jfn^{\theta}(t)=1/(\log(-z)-\theta)$ for $-z>e^{\theta}$, so inverting the Laplace transform through the Mellin-like inversion formula gives
\begin{align}
	\jfn^{\theta}(t)
	=
	\int_{\subset} \frac{\d z}{2\pi\img}\frac{e^{-zt}}{\log(- z) - \theta}\ ,
\end{align}
where $\subset$ denotes the counterclockwise contour $\{[-e^{\theta},\infty)+\img 0\}\cup \{[-e^{\theta},\infty)-\img 0\}$ that wraps $[-e^{\theta},\infty)$ in the complex plane from just above and below the real axis.

Next, let us state a couple useful bounds.
Let $\filt_{[s,t]}$ be the sigma algebra generated by $Z^{\theta}_{s',t'}$ for $s\leq s'\leq t'\leq t$, recall $t_i$ from \eqref{e.ti}, let
\begin{align}
	Z_i &= Z_i^{\theta} := Z^{\theta}_{t_{i-1},t_{i}}\ ,
	&
	W_i &= W_i^{\theta} := W^{\theta}_{t_{i-1},t_{i}}\ ,
	&
	\hk_i &:= \hk(t_i,\cdott)\ ,
\end{align}
and write $\hk_{i,r,y}:=\hk(t_ir^2,\cdott-y)$ to simplify notation.
The following bounds are not hard to verify from \eqref{e.ti} and \eqref{e.sgg2} with the aid of \eqref{e.jfn.estimate}--\eqref{e.jfn.intbd}:
\begin{align}
	\label{e.W.2ndmom}
	&\lim_{\bb\to 0} \sup_{\e\leq 1/2,i \in[1,N-1]}
		\big| (N-i) \E (\hk_{i-1} \Ldot W^{\theta}_i \Rdot 1)^2 - 1 \big|
	= 0\ ,
\\	
	\label{e.Z.hkk}
	&t_{i}r^2 \int_{\R^2} \d y \, \E (\hk_{i-1} \Ldot Z^{\theta}_{i} \Rdot \hk_{i,r,y})^2
	\leq
	c(c_0) r^2\log\frac{1+r^2}{r^2}\ ,
\end{align}
where \eqref{e.Z.hkk} holds for all $i\in[1,N-1]$, $\bb\leq 1/2$, $|\theta|\leq c_0$, and $r\in(0,\infty)$.

To alleviate heavy notation, we will often drop the dependence on $\e,\bb,\theta$ as we have been doing by writing $N_{\e,\bb}=N$, $t_{i,\e,\bb}=t_i$, and $Z^{\theta}_i=Z_i$.
Unless otherwise noted, all bounds in this paper hold for $|\theta|\leq c_0$ for any fixed $c_0<\infty$ and other constants $c$ in this paper may depend on $c_0$ \emph{without explicit specification}.
We write $c=c(a_1,a_2,\ldots)$ for a general deterministic constant in $[2,\infty)$ that may vary from place to place but depend only on the designated variables, except that we may omit the dependence on $c_0$.

\subsection{Decoupling expansion}
\label{s.prelim.decoupling}
Let us develop an exact expansion of the fraction in \eqref{e.step1.1}.
Set
\begin{align}
	\label{e.Di}
	D_i &= D_i^{\theta} := Z_{i}^{\theta} - ( Z_{i}^{\theta} \Rdot 1 )\, \hk_{i}\ ,
\\
	\label{e.scrI}
	\begin{split}	
	\scrI &= \scrI(N,\ell)
	:= 
	\big\{ 
		\calI = \{ I_1,\ldots,I_{|\calI|} \} \, \big| \, 
		|\calI| \geq 1, 
\\
		&I_1,\ldots, I_{|\calI|} \text{ disjoint intervals of integers in } [1,N-\ell], 
		\text{ each with } |I_i| \geq 2
	\big\}\ ,
	\end{split}		
\end{align}
and write any nonempty interval of integers $I$ as $[i_I,j_I]\cap\Z$ with $i_I\leq j_I\in\Z$. One may view $D_i$ as the error in approximating the point-to-point partition function $Z_i^\theta$ by the point-to-line partition function $ Z_{i}^{\theta} \Rdot 1$ multiplied by the heat kernel $ \hk_{i}$. 
\begin{prop}\label{p.expansion}
We have the expansion:
\begin{align}
	\label{e.decoupling}
	\frac{ \hk(\e^{2}) \Ldot Z_{\e^2,t_{N-\ell}} \Rdot 1}{ \prod_{i=1}^{N-\ell} \hk_{i-1} \Ldot Z_{i} \Rdot 1 }
	&=
		1
		+
		\sum_{\calI\in\scrI}
		\prod_{I\in\calI} 
		\frac{
			\hk_{i_{I}-1} \Ldot D_{i_{I}} \Cdot \cdots \Cdot D_{j_{I}-1} \Cdot Z_{j_I} \Rdot 1 
		}{
			(\hk_{i_{I}-1} \Ldot Z_{i_{I}} \Rdot 1) \cdots (\hk_{j_{I}-1} \Ldot Z_{j_{I}} \Rdot 1)
		}\ .
\end{align}
\end{prop}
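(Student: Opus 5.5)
The identity is purely algebraic: iterate the semigroup property and substitute $Z_i = D_i + (Z_i\Rdot 1)\,\hk_i$ at each intermediate time. Set $M:=N-\ell$. By Axiom~\ref{d.shf.ck} and $t_0=\e^2$,
\begin{align*}
	\hk(\e^2)\Ldot Z_{\e^2,t_{M}}\Rdot 1
	=
	\hk_0\Ldot Z_1\Cdot Z_2\Cdot\cdots\Cdot Z_{M}\Rdot 1\ .
\end{align*}
In each of the first $M-1$ factors we write $Z_i = D_i + (Z_i\Rdot 1)\,\hk_i$, which is \eqref{e.Di}, and leave the last factor $Z_M$ untouched. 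Expanding multilinearly gives $2^{M-1}$ terms, indexed by the set $S\subset[1,M-1]$ of positions where $D_i$ is chosen.

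The key observation is that choosing $(Z_i\Rdot 1)\hk_i$ at position $i$ cuts the chain. The measure $(Z_i\Rdot 1)\,\hk_i$ has the form $\mu(\d x)\,\hk(t_i,x')\,\d x'$, so its $\Cdot$-product with the next factor gives
\begin{align*}
	\big(\cdots \Ldot Z_i\Rdot 1\big)\,\big(\hk_i\Ldot Z_{i+1}\Cdot\cdots\big)\ .
\end{align*}
This needs $\lim_{\delta\to0}$ in \eqref{e.Cdot} to act on a measure with a continuous density in the second variable. Since $\hk_i*\hk(\delta^2)=\hk(t_i+\delta^2)$, this reduces to continuity in the test function of $f\Ldot Z\Rdot g$, which I take as standard from the vague/$\Lsp^n$ convergence results cited after \eqref{e.Cdot.}. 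I expect this justification to be the only technical point.

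Hence each term factorizes over the maximal runs of consecutive integers in $[1,M]$ that are separated by the cut positions $[1,M-1]\setminus S$. A run $I=[i_I,j_I]$ consists of $D_{i_I},\ldots,D_{j_I-1}$ followed by a cut factor at $j_I$, or by $Z_M$ if $j_I=M$. It contributes
\begin{align*}
	\hk_{i_I-1}\Ldot D_{i_I}\Cdot\cdots\Cdot D_{j_I-1}\Cdot Z_{j_I}\Rdot 1\ ,
\end{align*}
where we used $(Z_{j}\Rdot1)\hk_j$ with $\hk_j\Ldot(\cdot)$ feeding the next run and $1\Ldot$-closing. More precisely, $\hk_{i-1}\Ldot Z_i\Rdot 1$ is what a cut at $i-1$ followed by $Z_i$ produces. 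A singleton run $\{i\}$ contributes exactly $\hk_{i-1}\Ldot Z_i\Rdot 1$, which is the corresponding denominator factor. Dividing by $\prod_{i=1}^M \hk_{i-1}\Ldot Z_i\Rdot1$, which is a.s.\ positive, singletons give the factor $1$. The runs of length $\ge2$ give the ratios in \eqref{e.decoupling}, each divided by the denominator factors over its own indices.

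It remains to check the bijection. The sets $S\subset[1,M-1]$ correspond exactly to collections $\calI$ of disjoint integer intervals in $[1,M]$ with $|I|\ge2$, via $S=\bigcup_{I}[i_I,j_I-1]$. Indeed, every maximal block of $S$ together with the following index forms such an interval, and distinct intervals are automatically separated by the run structure. The case $S=\emptyset$ gives $\calI=\emptyset$ and the leading $1$; all other $S$ give $\calI\in\scrI$. This yields \eqref{e.decoupling}.
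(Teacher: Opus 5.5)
Your proposal is correct and is essentially the paper's proof. The paper applies the same splitting $Z_i = D_i + (Z_i\Rdot 1)\,\hk_i$ one interval at a time, starting from \eqref{e.decoupling.1}, to reach \eqref{e.decoupling.} and then divides; you instead expand all $N-\ell-1$ factors at once and spell out the bijection $S\leftrightarrow\calI$ and the $\delta\to0$ limit in $\Cdot$, both of which the paper covers with ``continue similarly.'' The only loose point is your unjustified claim that the denominator is a.s.\ positive, which is inessential: the undivided identity \eqref{e.decoupling.} holds regardless, and the ratio is only used on $\Wevent$, where each factor is at least $1/2$.
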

\begin{proof}
To begin, write
\begin{align}
	\label{e.decoupling.1}
	\hk(\e^{2}) \Ldot Z_{\e^2,t_{N-\ell}} \Rdot 1
	=
	\big( \hk_{0} \Ldot Z_{1} \Rdot 1 \big)
	\ 
	\hk_{1} \Ldot Z_{t_1,t_{N-\ell}} \Rdot 1 
	+
	\hk_{0} \Ldot D_{1} \Cdot Z_{t_1,t_{N-\ell}} \Rdot 1\ .
\end{align}
On the right hand side, the first term decouples the actions on $[\e^2,t_1]$ and $[t_1,t_{N-\ell}]$, while the second term is the error term.
Continue the procedure by writing
\begin{align}
	\hk_{1} \Ldot Z_{t_1,t_{N-\ell}} \Rdot 1
	&=
	\big( \hk_{1} \Ldot Z_{t_1,t_{2}} \Rdot 1 \big)\ 
	\hk_{2} \Ldot Z_{t_2,t_{N-\ell}} \Rdot 1
	+
	\hk_{1} \Ldot D_{2} \Cdot Z_{t_2,t_{N-\ell}} \Rdot 1\ ,
\\
	\hk_{0} \Ldot D_{1} \Cdot Z_{t_1,t_{N-\ell}} \Rdot 1
	&=
	\big( \hk_{0} \Ldot D_{1} \Cdot Z_{t_1,t_2} \Rdot 1	\big) \ \hk_2 \Ldot Z_{t_2,t_{N-\ell}} \Rdot 1
	+
	\hk_{0} \Ldot D_{1} \Cdot D_{2} \Cdot Z_{t_2,t_{N-\ell}} \Rdot 1\ ,
\end{align}
and continue similarly.
Doing so gives
\begin{subequations}
\label{e.decoupling.}
\begin{align}
	\label{e.decoupling.pre.1}
	\hk(\e^{2}) &\Ldot Z_{\e^2,t_{N-\ell}} \Rdot 1
	=
	\prod_{i=1}^{N-\ell} \hk_{i-1} \Ldot Z_{i} \Rdot 1	
\\
	&+
	\sum_{\calI\in\scrI}
	\prod_{I\in\calI} \hk_{i_I-1} \Ldot D_{i_I} \Cdot \cdots \Cdot D_{j_I-1} \Cdot Z_{j_I} \Rdot 1	\cdot
	\prod_{i\notin \cup\calI} \hk_{i-1} \Ldot Z_{i} \Rdot 1\ .
\end{align}
\end{subequations}
Dividing both sides by the first product gives the desired result.
\end{proof}

To control the denominator on the right hand side of \eqref{e.decoupling}, we introduce the event
\begin{align}
	\label{e.Wevent}
	\Wevent = \Wevent(N_{\e,\bb},\ell) := \cap_{i=1}^{N-\ell} \Wevent_{i}\ ,
	\qquad
	\Wevent_i
	:=
	\big\{ |\hk_{i-1}\Ldot W_i\Rdot 1| \leq 1/2 \big\}\ .
\end{align}
Note that $\hk_{i-1}\Ldot Z_i\Rdot 1= 1 + \hk_{i-1}\Ldot W_i\Rdot 1$, so on the event $\Wevent$ we have $\hk_{i-1}\Ldot Z_i\Rdot 1 \geq 1/2$.
This event holds with high probability as $\ell\to\infty$.
To see why, use \eqref{e.Wi.mom} for $n=4$ to get $\P[\Wevent_i^\compl]\leq c(c_0,n)(N-i)^{-2}$ for all $i\leq N-c$, and combine them through the union bound to get
\begin{align}
	\label{e.Wevent.prob}
	\sup_{\e,\bb\in(0,1/2]} \P[\Wevent^\compl]
	\longrightarrow 0 \ ,
	\quad
	\text{as } \ell\to\infty\ ,
	\qquad
	\text{where }\Wevent = \Wevent(N_{\e,\bb},\ell)\ .
\end{align}

The denominator on the left hand side of \eqref{e.decoupling} is a product of independent variables.
With the aid of \eqref{e.W.2ndmom} and a higher moment bound, which we derive in \eqref{e.Wi.mom}, it is not hard to apply the classical central limit theorem to obtain the following convergence.
We include a proof of it at the end of Appendix~\ref{s.a.Wmom}.

\begin{lem}\label{l.clt}
There exist  $f(\eps,\ell,\bb)$ such that $\lim_{(\ell,\bb)\to (\infty,0)}\limsup_{\eps\to0} f= 0$  and that the random variable
\begin{align}
	H'_{\e,\ell,\bb}
	:=
	\frac{1 }{\sqrt{\log\log\e^{-1}}}
	\Big(
		\log\prod_{i=1}^{N-\ell} \hk_{i-1}\Ldot Z_{i} \Rdot 1
		+
		\frac{1+f(\eps,\ell,\bb)}{2}\log\log\e^{-1}
	\Big)
\end{align}
converges to Gaussian in the following sense
\begin{align}
    \label{e.clt}
	\lim_{\ell,\bb\to(\infty, 0)}
	\limsup_{\e\to 0}
	\Big| \P\big[ H'_{\e,\ell,\bb} \in [\alpha,\beta] \big] - \int_{\alpha}^{\beta} \frac{\d x}{\sqrt{2\pi}} \, e^{-x^2/2} \Big|
	=
	0\ ,
    \qquad
    \alpha<\beta\in\R\ .
\end{align}
\end{lem}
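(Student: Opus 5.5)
We write $X_i := \hk_{i-1}\Ldot W_i\Rdot 1$, so that $\hk_{i-1}\Ldot Z_i\Rdot 1 = 1+X_i$. The plan is to apply the Lindeberg--Feller central limit theorem to the sum $\sum_{i\le N-\ell}\log(1+X_i)$. The variables $X_i$ are independent by Axiom~\ref{d.shf.inde}, have mean zero, and by \eqref{e.W.2ndmom} satisfy
\begin{align*}
	\E X_i^2 = \frac{1+\delta_i}{N-i}\ ,
	\qquad
	\sup_{\e\le 1/2,\, i}|\delta_i|\le \eta(\bb)\to 0 \ \text{ as } \bb\to0\ .
\end{align*}
We will also use the higher moment bound \eqref{e.Wi.mom}, namely $\E X_i^{4}\le c\,(N-i)^{-2}$ for $i\le N-c$, and, if needed, a slightly higher moment of the same form.

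Since $\log(1+X_i)$ is singular near $X_i=-1$, we work on the event $\Wevent$ from \eqref{e.Wevent}. Its complement has probability tending to $0$ as $\ell\to\infty$ by \eqref{e.Wevent.prob}, so it is negligible in \eqref{e.clt}. We therefore replace $X_i$ by the truncation $\tilde X_i := X_i\ind_{|X_i|\le1/2}$, which agrees with $X_i$ on $\Wevent$. On $|x|\le 1/2$ we Taylor expand $\log(1+x)=x-\tfrac{x^2}{2}+R(x)$ with $|R(x)|\le c|x|^3$. This splits the sum into three parts.

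\textbf{(a) Linear part.} The term $\sum_i \tilde X_i$ has variance
\begin{align*}
	\sum_{i\le N-\ell}\frac{1+\delta_i}{N-i}=(1+O(\eta))\log N+O(1)\ ,
\end{align*}
up to truncation corrections. These corrections are controlled by $\E X_i^2\ind_{|X_i|>1/2}\le 4\E X_i^4\le c(N-i)^{-2}$, which is summable. Likewise, the means satisfy $|\E\tilde X_i|\le c(N-i)^{-2}$, which is summable. Since $N\approx \log\e^{-1}/\log\bb^{-1}$, we have $\log N=\log\log\e^{-1}-\log\log\bb^{-1}$, so $\log N/\log\log\e^{-1}\to1$ as $\e\to0$ for fixed $\bb$. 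Lyapunov's condition follows from $\sum_i\E X_i^4\le c\sum_{k\ge\ell}k^{-2}=O(1)$, which is $o((\log N)^2)$.

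\textbf{(b) Quadratic part.} We write $\tfrac12\sum_i\tilde X_i^2=\tfrac12\sum_i\E \tilde X_i^2+\tfrac12\sum_i(\tilde X_i^2-\E\tilde X_i^2)$. The fluctuation term has variance at most $\sum_i\E X_i^4=O(1)$, hence it is $O_P(1)$, which is negligible after dividing by $\sqrt{\log\log\e^{-1}}$. The deterministic term equals $\tfrac12(1+O(\eta))\log N+O(1)$, and this produces the centering $\tfrac{1+f}{2}\log\log\e^{-1}$.

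\textbf{(c) Cubic remainder.} We bound $\E\sum_i|\tilde X_i|^3\le\sum_i(\E X_i^4)^{3/4}\le c\sum_{k\ge \ell}k^{-3/2}$. This is $O(1)$, in fact $o(1)$ as $\ell\to\infty$, so the remainder is also negligible.

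To define $f$, we set
\begin{align*}
	f(\e,\ell,\bb):=\frac{\sum_{i\le N-\ell}\E\tilde X_i^2}{\log\log\e^{-1}}-1 \ ,
\end{align*}
absorbing the summable mean corrections as well. From the asymptotics above, $\limsup_{\e\to0}|f|\le c\,\eta(\bb)$, so $f$ satisfies the required limit. With this choice the centered quantity $H'_{\e,\ell,\bb}$ equals $\sum_i(\tilde X_i-\E\tilde X_i)/\sqrt{\log\log\e^{-1}}$ plus terms that go to $0$ in probability. The normalized variance of this leading sum is $1+O(\eta(\bb))+o_\e(1)$.

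It remains to pass to the limit. Applying Lindeberg--Feller, and accounting for the variance mismatch of order $\eta(\bb)$ and the probability $\P[\Wevent^{\compl}]$, the difference in \eqref{e.clt} is bounded by a quantity that vanishes as $\e\to0$ and then $(\ell,\bb)\to(\infty,0)$. The main obstacle is the higher moment bound \eqref{e.Wi.mom}, i.e.\ $\E X_i^4\lesssim(N-i)^{-2}$ uniformly in $\e$ and $\bb$. Its proof needs the explicit $n=4$ delta-Bose kernel $\sgg^{4,\theta}$ together with estimates \eqref{e.jfn.estimate}--\eqref{e.jfn.intbd}. We take it as given, since it is deferred to the appendix. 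Given that bound, the remaining steps are routine.
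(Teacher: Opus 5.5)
Your proof is correct and follows essentially the same route as the paper. Your truncated summand $\log(1+\tilde X_i)$ is exactly the paper's $Y_i=\ind_{\Wevent_i}\log \hk_{i-1}\Ldot Z_i\Rdot 1$. Both arguments Taylor expand it, control the errors with \eqref{e.W.2ndmom} and the fourth-moment case of \eqref{e.Wi.mom}, apply Lindeberg's CLT for fixed $(\ell,\bb)$, and remove the truncation via \eqref{e.Wevent.prob}. The only difference is organizational: the paper applies Lindeberg directly to $\sum_i Y_i$ after computing $\E Y_i$ and $\Var Y_i$, whereas you apply it to the linear part and show separately that the quadratic fluctuation and cubic remainder are $O_P(1)$.
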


\section{Bounding the errors}
\label{s.boundingerror}

In this section, we bound the second moment of the numerator in the fraction in \eqref{e.decoupling}.
Let
\begin{align}
	\label{e.DZ}
	\DZ_{I} := \hk_{m-1} \Ldot D_{m} \Cdot \cdots \Cdot D_{n-1} \Cdot Z_{n} \Rdot 1\ ,
	\quad
	I = [m, n]\ 
\end{align}
be the quantity of interest.
Throughout this section, we write intervals of indices as $[i,j]:=[i,j]\cap\Z$, implicitly assuming the \emph{restriction onto integers}.
We will also write $I=[i_I,j_I]=[m,n]$ interchangeably for convenience of notation.
To prepare for the analysis in Section~\ref{s.last.piece}, we actually consider a slightly different expression 
\begin{align}
	\label{e.DZz}
	\int_{\R^2} \d y \, \E\,\DZz_{I,r}(y)^2\ ,
	&&
	\DZz_{I,r}(y) := \hk_{m-1} \Ldot D_{m} \Cdot \cdots \Cdot D_{n-1} \Cdot Z_{n} \Rdot \hk(t_nr^2,\cdott-y)\ .
\end{align}
As will be seen in the proof of Corollary~\ref{c.DZ.bd}, one can easily deduce a bound on $\E\DZ_{I}^2$ from a suitable bound on the integral in \eqref{e.DZz}.

To bound the integral in \eqref{e.DZz}, we begin by deriving an expression of it.
\begin{lem}\label{l.DZz.int}
For $I=[m,n]$ with $m<n$ and the $B_{\omega}$ defined after this lemma,
\begin{align}
	\label{e.Dzz=sumB}
	\int_{\R^2} \d y \, \E\ \DZz_{I,r}(y)^2
	=
	\sum_{\omega\subset(m,n]} B_\omega\ .
\end{align}
\end{lem}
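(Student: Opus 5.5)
The identity \eqref{e.Dzz=sumB} should come from an exact second-moment computation: square $\DZz_{I,r}(y)$, factorize the expectation over the independent intervals, and split each interval's contribution into a ``heat'' part and a ``$\sgg^{2,\theta}$'' part. The subsets $\omega\subset(m,n]$ should record which indices carry the $\sgg^{2,\theta}$ part. The one thing I must guess is the form of $B_\omega$, which is stated only after the lemma; I expect it to be the term produced by this expansion.

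\emph{Step 1: tensor-square and factorize.} Write $\DZz_{I,r}(y)^2 = \hk_{m-1}^{\otimes 2}\Ldot D_m^{\otimes 2}\Cdot\cdots\Cdot D_{n-1}^{\otimes 2}\Cdot Z_n^{\otimes 2}\Rdot \hk(t_nr^2,\cdott-y)^{\otimes 2}$. The $D_i$ are functionals of $Z_i$ on the disjoint intervals $[t_{i-1},t_i]$, so they are independent by Axiom~\ref{d.shf.inde}. Hence, as in \eqref{e.dot.underE}, $\E$ turns the $\Cdot$-products into a composition of deterministic kernels $\E D_i^{\otimes 2}$ and $\E Z_n^{\otimes 2}=\sg^{2,\theta}(t_n-t_{n-1})$. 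Integrating in $y$ replaces the terminal test function by $\int\d y\,\hk(t_nr^2,x_1-y)\hk(t_nr^2,x_2-y)=\hk(2t_nr^2,x_1-x_2)$.

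Justifying the passage to the limit in \eqref{e.Cdot} is routine: the $\Cdot_\delta$-approximations converge in $\Lsp^n(\P)$ by \cite{tsai2024stochastic}, and the functions involved are positive or Gaussian, so the expectations can be exchanged.

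\emph{Step 2: compute $\E D_i^{\otimes 2}$.} Use $D_i=Z_i-(Z_i\Rdot 1)\hk_i$ together with the moment axiom for $n=2$ and \eqref{e.sg2}. The kernel $\E D_i^{\otimes2}(\d x,\d x')$ is a combination of four terms: $\sg^{2,\theta}$, the two cross terms obtained by integrating out one endpoint of $\sg^{2,\theta}$ and tensoring with $\hk_i$, and $\hk_i^{\otimes 2}$ weighted by the fully integrated kernel. Each term splits as $\hk^{\otimes 2}$-part plus $\sgg^{2,\theta}$-part, so $\E D_i^{\otimes2}=\Aop_i+\Bop_i$ with $\Aop_i$ built from heat kernels only and $\Bop_i$ linear in $\sgg^{2,\theta}(t_i-t_{i-1})$. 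Likewise $\E Z_n^{\otimes 2}=\hk^{\otimes2}+\sgg^{2,\theta}$.

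\emph{Step 3: the first index is forced.} For $i=m$ the heat part is killed by the left test function. Since $\hk_{m-1}\Ldot\heatsg_{t_{m-1},t_m}=\hk(t_{m-1}+t_m-t_{m-1})=\hk_m$ and $\heatsg\Rdot1=1$, the mean of $\hk_{m-1}\Ldot D_m$ is $\hk_m-\hk_m=0$. Consequently $\hk_{m-1}^{\otimes2}\Ldot\Aop_m=0$, and only $\Bop_m$ survives at index $m$. This is why the sum runs over $\omega\subset(m,n]$ rather than $[m,n]$.

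Expanding the remaining product $\prod_{i=m+1}^{n}(\Aop_i+\Bop_i)$, with the convention $\Aop_n+\Bop_n:=\hk^{\otimes 2}+\sgg^{2,\theta}$, gives $2^{n-m}$ terms indexed by $\omega=\{i\in(m,n]:\Bop_i\text{ chosen}\}$. Each term is
\[
B_\omega=\Ip{\hk_{m-1}^{\otimes2},\Bop_m\textstyle\prod_{i\in(m,n]}(\Aop_i\ind_{i\notin\omega}+\Bop_i\ind_{i\in\omega})\,\hk(2t_nr^2,\cdott)},
\]
and summing these gives \eqref{e.Dzz=sumB}.

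The main obstacle is bookkeeping rather than analysis. The cross terms in $\E D_i^{\otimes2}$ involve the partially integrated quantities $(Z_i\Rdot1)$, so I must check that each of them still splits cleanly into a heat part and a $\sgg^{2,\theta}$ part. This uses $\sgg^{2,\theta}\Rdot 1$ acting in one coordinate, which follows from \eqref{e.sgg2}. The expansion must then be organized to match the paper's definition of $B_\omega$, which may group terms differently from my $\Aop_i,\Bop_i$.
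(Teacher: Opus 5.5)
Your Steps 1--3 are correct, and they are the first half of the paper's proof written in operator form. Write $D_i=E_i+\Ebar_i$ with $E_i:=W_i-(W_i\Rdot 1)\hk_i$ (mean zero) and $\Ebar_i:=\E D_i=\heatsg_{t_{i-1},t_i}-\hk_i$. Then your split is exactly $\Aop_i=\Ebar_i^{\otimes 2}$ and $\Bop_i=\E E_i^{\otimes 2}$, with cross terms vanishing because $\E E_i=0$. So expanding $\prod_i(\Aop_i+\Bop_i)$ is the same as the paper's expansion $\DZz_{I,r}(y)=\sum_\omega A_\omega(y)$ followed by the orthogonality $\E A_\omega A_{\omega'}=0$. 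Splitting $D_i$ before squaring also removes your worry about the cross terms.

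Your labeling, however, is reversed. In \eqref{e.B} the collision times $s_i<s'_i$ and the factors $\jfn(s'_i-s_i)$ run over $i\in[m,n]\setminus\omega$. So the paper's $\omega$ marks where the \emph{mean} $\Ebar_i$ is taken, and your term indexed by $\omega$ is the paper's $B_{(m,n]\setminus\omega}$. This is harmless for the summed identity, but it matters later: the factor $\bb^{2|\omega|}$ in Lemma~\ref{l.B.bd} counts mean factors.

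The genuine gap is that you stop where the content of the lemma begins. The statement is an identity with the explicit time integrals \eqref{e.B}. Defining $B_\omega$ as whatever operator inner product the expansion produces makes \eqref{e.Dzz=sumB} a tautology, and leaves unproven exactly the formula that Lemmas~\ref{l.ui}--\ref{l.B.bd} use. What is missing is the evaluation of each term, in three steps.

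(i) Collapse each maximal run $[i_h,j_h]\subset\omega$ of means via $\Ebar_{i_h}\Cdot\cdots\Cdot\Ebar_{j_h}=\heatsg_{t_{i_h-1},t_{j_h}}-\hk_{j_h}$. Then note that $\hk_{i_h-1}\Ldot(\heatsg_{t_{i_h-1},t_{j_h}}-\hk_{j_h})=0$ kills the $(W_{i_h-1}\Rdot 1)\hk_{i_h-1}$ part of $E_{i_h-1}$.

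(ii) Every remaining $i\in[m,n)\setminus\omega$ contributes a two-term difference: $W_i$ versus $(W_i\Rdot1)\hk_i$, or $W_i\Cdot\heatsg_{t_i,t_{j_h}}$ versus $W_i\Cdot\hk_{j_h}$ when a run follows. Squaring gives $\sum_{\vecbeta_i}\sign\,\vecbeta_i$ over the choices $\hk(s-s',y-y')$ or $\hk(s,y)$ in each replica, with only $(+,+)$ at $i=n\notin\omega$. Meanwhile \eqref{e.sgg2} supplies the factor $4\pi\jfn(s'_i-s_i)\hk(\tfrac{s'_i-s_i}{2},y'_i-y_i)$.

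(iii) Integrate the spatial variables in the order $y_m,y'_m,y_{m_+},\ldots,y$. Start from $\hk(s_m,y_m)^2=\frac{1}{4\pi s_m}\hk(\tfrac{s_m}{2},y_m)$ and iterate the Gaussian identity
\[
\int_{\R^2}\d y'\,\hk\big(\tfrac{s'-v}{2},y'\big)\prod_{j=1,2}f^{\beta_j}(s',y',s,y)=\frac{1}{4\pi(s-u)}\,\hk\big(\tfrac{s-\tilde v}{2},y\big).
\]
Its output $(u,\tilde v)$ is exactly $(u_i,v_i)$ of \eqref{e.ui} when $(s',s,v)=(s'_i,s_{i_+},v_{i_-})$. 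For mixed signs this is the product $\hk(s,y)\,\hk(s-\tfrac{s'+v}{2},y)$.

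This recursion is where the factors $1/(s_i-u_{i_-})$ come from. The $4\pi$'s from the collision kernels cancel all but one of the $1/4\pi$'s. The last $y$-integration leaves $1/(4\pi(s_{n+1}-u_{(n+1)_-}))$ with $s_{n+1}=t_n+t_nr^2$. This evaluation, not the expansion, is what the lemma asserts.
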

\noindent{}%
The definition of $B_\omega$ will involve the index set $[m,n]_{\omega} := [m,n]\setminus\omega$, where we have and will continue to write $J_{\omega}:=J\setminus\omega$.
Let $i_-=i_-(\omega):=\max [m,i)_{\omega}$ be the previous index of $i$ in the index set.
The definition will also involve sums over $\vecbeta_i\in\{+,-\}^2$ for $i\in[m,n]_{\omega}$.
Write $\vecbeta_{A}:=(\vecbeta_{i})_{i\in A}$ to simplify notation.
For $\vecbeta=(\beta_1,\beta_2)\in\{\pm\}^2$, interpret $\pm$ as $\pm 1$ and let $\sign\,\vecbeta:=\beta_1\beta_2$.
Now define
\begin{subequations}
\label{e.B}	
\begin{align}
	&B_{\omega}
	=
	\prod_{i\in[m,n]_{\omega}} 
	\int_{t_{i-1}<s_i<s'_i<t_i} \d s_{i} \, \d s'_{i}
	\ \cdot \
	\frac{1}{ s_{m}} \jfn(s'_{m}-s_{m})
\\
	&\cdot
	\prod_{i\in(m,n]_{\omega}} 
	\sum_{\vecbeta_{i_-}} 
	\frac{\sign\,\vecbeta_{i_-}}{s_{i}-u_{i_-}} \jfn(s'_{i}-s_{i})
	\cdot
	\Big( \sum_{\vecbeta_{(n+1)_-}} \sign\, \vecbeta_{(n+1)_-} \Big)^{\ind_{n\in\omega}}
	\frac{1}{4\pi(s_{n+1}-u_{(n+1)_-})}\ ,
\end{align} 
\end{subequations}
with the convention that $s_{n+1}:=t_n+t_nr^2$, where the sums run over $\vecbeta_{i_-},\vecbeta_{(n+1)_-}\in\{+,-\}^2$ as said, and $u_{i}=u_{i}(\vecbeta_{[m,i]_{\omega}})$ is given by the following iteration together with $v_{i} = v_{i}(\vecbeta_{[m,i]_{\omega}})$.
For $u_i$ with $i\in[m,n)_\omega$ and $v_{i}$ with $i_-\in[m,n)_\omega$, the iteration reads
\begin{align}
	\label{e.ui}
	u_{i} =
	\begin{cases}
		s'_{i}\ , 	&	\vecbeta_i = (+,+)\ , \\
		0\ ,		&	\vecbeta_i = (-,-)\ , \\
		\frac{s'_{i} + v_{i_-}}{4}\ ,
		&	\sign\,\vecbeta_i = -1\ ,
	\end{cases}
	\qquad
	v_{i} =
	\begin{cases}
		v_{i_-}\ ,		&	\vecbeta_i = (+,+)\ , \\
		0\ ,			&	\vecbeta_i = (-,-)\ , \\
		\frac{s_{i_+}(s'_{i}+v_{i_-})}{4s_{i_+}-s'_{i}-v_{i_-}}
		&	\sign\,\vecbeta_i = -1\ ,
	\end{cases}	
\end{align}
with $v_{m_-}:=0$; when $n\notin\omega$, $u_{(n+1)_-}=u_{n}:=s'_{n}$.

\begin{proof}[Proof of Lemma~\ref{l.DZz.int}]
We begin by developing an expansion of the left hand side of \eqref{e.Dzz=sumB}.
Recall (from Section~\ref{s.prelim.tools}) that $\E Z_{s,t}=\heatsg_{s,t}$ and that $W_{s,t}:= Z_{s,t}-\heatsg_{s,t}$.
Write $\heatsg_{i,j}:=\heatsg_{t_i,t_j}$ with slight abuse of notation.
We separate the means of $D_i$ and $Z_n$ from themselves by writing
\begin{align}
	&D_i = E_i + \Ebar_i\ ,
	&&
	E_i := W_i - (W_i \Rdot 1)\, \hk_i\ ,
	&&
	\Ebar_i := \E D_i = \heatsg_{i-1,i} - \hk_i\ ,
	&&
	i\in [m,n)\ ,
\\
	&Z_n = E_n + \Ebar_n\ ,
	&&
	E_n := W_n\ ,
	&&
	\Ebar_n
	:=
	\E Z_n = \heatsg_{n-1,n}\ ,&&&
\end{align}
where the $\hk_i$s are interpreted as $\d y\, \d y' \hk(t_i,y')\in\Msp_+(\R^4)$.
Write $\hk_{n,r,y}=\hk(t_nr^2,\cdott-y)$ to simplify notation, note that $\hk_{m-1}\Ldot \Ebar_{m}=0$, and write
\begin{align}
	\label{e.DZz.expansion.}
	\DZz_{I,r}(y)
	=
	\hk_{m-1} \Ldot E_{m} \Cdot (E_{m+1}+\Ebar_{m+1}) \Cdot \cdots \Cdot (E_{n}+\Ebar_{n}) \Rdot \hk_{n,r,y}\ .
\end{align}
Expanding the last expression gives
\begin{align}
	\label{e.DZz.expansion..}
	\DZz_{I,r}(y)	
	=
	\sum_{\omega\subset(m,n]} A_{\omega}(y)\ ,
	\qquad
	A_{\omega}(y)
	:=
	\hk_{m-1} \Ldot E_{m} \Cdot \cdots \Rdot \hk_{n,r,y}\ ,
\end{align}
where the factors in the product are $E_{i}$ when $i\notin\omega$ and $\Ebar_{i}$ otherwise.
Observe that $\E A_{\omega}A_{\omega'}=0$ whenever $\omega\neq\omega'$.
To see why, assume without loss of generality $\omega\setminus\omega'\neq\emptyset$, take $i\in\omega\setminus\omega'$, and take the conditional expectation of $A_{\omega}A_{\omega'}$ given $\sigma\{Z_{s,t}\,|\,[s,t]\subset [0,t_{i-1}]\cup[t_{i},1]\}$ using the fact that $\E E_{i}=0$.
Taking the second moment of the sum in \eqref{e.DZz.expansion..} and integrating the result over $y$ give
\begin{align}
	\label{e.DZz.expansion}
	\E\,\DZz_{I,r}(y)^2
	=
	\sum_{\omega\subset(m,n]} \int_{\R^2} \d y\, \E A_{\omega}(y)^2\ .
\end{align}

Next, we express the second expectation in \eqref{e.DZz.expansion} as an integral.
Write $\omega=[i_1,j_1]\cup\ldots\cup [i_k,j_k]$ with $j_{h}+1<i_{h+1}$, use the readily verified identity 
$
	\E D_{i} \Cdot \cdots \Cdot \E D_{j}
	=
	\heatsg_{i-1,j} - \hk_{j}
$
for $[i,j]=[i_1,j_1],\ldots,[i_k,j_k]$.
Then, let $E_i^+ := W_i$ and $E^-_i := (W_i\Rdot 1) \hk_{i}$, write $E_i=E_i^+-E_i^-$ for each $i\in[m,n)_{\omega}$, and note that $E^-_{i-1}\Cdot (\heatsg_{i-1,j} - \hk_{j} )=0$.
Doing so gives
\begin{align}
	\label{e.A.expansion}
	A_\omega(y)
	=
	\hk_{m-1} \Ldot 
	\Big( \prod_{h=1}^k \cdots  E_{i_{h}-1}^+ \Cdot \big( \heatsg_{i_h-1,j_{h}} - \hk_{j_{h}} \big) \Big)
	\cdots
	\Rdot \hk_{n,r,y} \ ,
\end{align}
where the first $\cdots$ consists of products of $E_{i}^+-E^-_{i}$ with $i\in(j_{h-1},i_{h})$ and $j_{0}:=m-1$, the second $\cdots$ consists of $E_{i}^+-E^-_{i}$ with $i\in(j_{k},n)$ followed by $W_{n}$ when $j_k<n$, and should be omitted altogether when $j_k=n$.
Also, when $\omega=\emptyset$, the terms within the parentheses should be omitted altogether.
Square \eqref{e.A.expansion}, expand the result, take $\E$ with the aid of \eqref{e.Wmomformula} for $n=2$, \eqref{e.sgg2}, and relations like \eqref{e.dot.underE}, and simplify the result using the heat semigroup property.
Doing so gives
\begin{subequations}
\label{e.A.integral}
\begin{align}
	\E & A_{\omega}(y)^2
	=
	\prod_{i\in[m,n]_{\omega}}
	\int_{t_{i-1}<s_i<s'_i<t_{i}} \d s_{i} \, \d s'_{i} \int_{\R^4} \d y_{i}\, \d y'_{i}
	\cdot
	\hk(s_{m},y_{m})^2
\\
	\label{e.D.bd..integrals.2}
	&\cdot
	\prod_{i\in[m,n]_{\omega}}
	4\pi \jfn(s'_{i}-s_{i}) 
	\hk(\tfrac{s'_{i}-s_{i}}{2},y'_{i}-y_{i}) 
	\sum_{\vecbeta_i}\sign\,\vecbeta_i
	\prod_{j=1,2} f^{\beta_{ij}}(s'_{i},y'_{i},s_{i_+},y_{i_+})\ .
\end{align}
\end{subequations}
Here $i_+=i_+(\omega) :=\min (i,n+1]_{\omega}$, with the convention that $s_{n+1}:=t_n+t_nr^2$ and $y_{n+1}:=y$.
For $i\in[m,n)_{\omega}$, the sum runs over $\vecbeta_i\in\{+,-\}^2$. 
When $n\notin\omega$, the sum for $\vecbeta_{n}$ runs over the \emph{single} element $(+,+)$.
The function $f^{\beta}$ is defined as
\begin{align}
	f^{\beta}(s',y',s,y)
	:=
	\begin{cases}
		\hk(s-s',y-y')\ ,	&\text{when } \beta=+\ ,
		\\
		\hk(s,y)\ ,		&\text{when }\beta=-\ .
	\end{cases}
\end{align}
For $i\in[m,n)_{\omega}$, the vector $\vecbeta_{i}$ parameterizes the choice between $E^{+}_{i}$ and $-E^{-}_{i}$ in \eqref{e.A.expansion} when $i_+ = i+1$ and the choice between $E^+_{i}\Cdot\heatsg_{i,\,i_+-1}$ and $-E^+_{i}\Cdot\hk_{i_+-1}$ when $i_+ > i+1$ (which means $i=i_{h}-1$ and $i_+-1=j_h$ for some $h$).
When $n\notin\omega$, the second last factor in \eqref{e.A.expansion} is $W_n$, which unlike other terms is not a difference and results in the single value $(+,+)$ for $\vecbeta_{n}$.

To complete the proof, we integrate \eqref{e.A.integral} over $y$ and evaluate the spatial integrals in the order of $y_m,y'_m,y_{m+1},\ldots$.
First, write $\hk(s_m,y_m)^2\hk(\tfrac{s'_{m}-s_{m}}{2},y'_{m}-y_{m}) =\frac{1}{4\pi s_m} \hk(\frac{s_m}{2},y_m)\hk(\tfrac{s'_{m}-s_{m}}{2},y'_{m}-y_{m})$ and integrate over $y_m$ to get $\frac{1}{4\pi s_m}\hk(\frac{s'_m}{2},y'_{m})$.
Next, for the integral over $y'_{m}$, the relevant integrands are $\hk(\frac{s'_m}{2},y'_{m})$ and the last product in \eqref{e.D.bd..integrals.2} for $i=m$.
Perform the integration over $y'_{m}$ with the aid of the heat semigroup property and simplify the result.
Doing so gives
\begin{align}
	\int_{\R^{2}} \d y'_{m}\, \hk(\tfrac{s'_m}{2},y'_{m})
	\prod_{j=1,2} f^{\beta_{mj}}(s'_{m},y'_{m},s_{m_+},y_{m_+})
	=
	\frac{1}{4\pi(s_{m_+}-u_{m})} \hk(\tfrac{s_{m_+}-v_{m}}{2},y_{m_+}) \ .
\end{align}
Proceed inductively to integrate over $y_{m_+},y'_{m_+},\ldots$, note that $y_{n+1}:=y$, and reindex the result $(i,i_+)\mapsto (i_-,i)$.
Doing so gives the desired result.
\end{proof}

The fact that $u_{i}$ depends on $v_{i_-}$ and the latter depends on many $\vecbeta_j$s is inconvenient.
On the other hand, it is not hard to check from the iteration in \eqref{e.ui} that $v_{i} \leq s'_{i}$, and recall that the $t_i$s follow the exponential time scale \eqref{e.ti}.
Hence, as long as $\bb$ is small, the contribution of $v_{i_-}$ in \eqref{e.B} should be small.
The following lemma quantifies this fact.
Let
\begin{align}
	\label{e.wi}
	w_i(\vecbeta)
	:=
	\begin{cases}
		s'_{i}\ ,&\quad \vecbeta=(+,+)\ , \\
		0\ ,&\quad \vecbeta=(-,-)\ , \\
		s'_{i}/4\ ,&\quad \sign\,\vecbeta=-1\ . 
	\end{cases}
\end{align}
\begin{lem}\label{l.ui}
Setup as above.
There exists $c$ such that for all $\bb\leq 1/c$ and $i\in(m,n]_\omega$,
\begin{align}
	\label{e.ui.expansion}
	\frac{1}{s_{i}-u_{i_-}(\vecbeta_{[m,i_-]_{\omega}})}
	=
	\frac{1}{s_{i}-w_{i_-}(\vecbeta_{i_-})}
	\sum_{j\in[m,i_-]} u_{\lambda(i,j)}\big(\vecbeta_{\lambda(i,j)_\omega}\big)\ ,
	\qquad
	\lambda(i,j) := [j,i_-)\ ,
\end{align}
where $u_{\lambda(i,i_-)}=u_{\emptyset}:=1$, and the remaining $u$s satisfy $|u_{\lambda}|\leq (c\bb^2)^{|\lambda|}$ for $\lambda=\lambda(i,j)$.
\end{lem}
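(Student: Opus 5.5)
The plan is to write $u_{i_-}=w_{i_-}(\vecbeta_{i_-})+\delta_{i_-}$, where the error $\delta_{i_-}$ comes only from $v_{(i_-)_-}$. By \eqref{e.ui} and \eqref{e.wi}, $\delta_{i_-}=0$ unless $\sign\,\vecbeta_{i_-}=-1$, in which case $\delta_{i_-}=v_{(i_-)_-}/4$. We then expand geometrically:
\begin{align*}
	\frac{1}{s_i-u_{i_-}}
	=
	\frac{1}{s_i-w_{i_-}}\cdot\frac{1}{1-\frac{\delta_{i_-}}{s_i-w_{i_-}}}\ .
\end{align*}
The point is that $v_{(i_-)_-}\leq s'_{(i_-)_-}\leq t_{(i_-)_- }\leq t_{i_--1}$, while $s_i-w_{i_-}\geq s_i-s'_{i_-}\cdot\max(1,\tfrac14)$. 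This last bound is the delicate one when $\vecbeta_{i_-}=(+,+)$; but then $\delta=0$, so only the case $w_{i_-}=s'_{i_-}/4$ matters. In that case $s_i-w_{i_-}\geq s_i-t_{i_-}/4\geq \tfrac34 t_{i-1}\geq \tfrac34 t_{i_-}$. Hence the ratio $\delta_{i_-}/(s_i-w_{i_-})$ is at most $c\,t_{i_--1}/t_{i_-}=c\bb^2$, and the expansion converges for $\bb\leq 1/c$.

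The issue is that the stated form is not a plain geometric series in one small quantity. It is a sum over $j\in[m,i_-]$ of terms $u_{\lambda(i,j)}$, with $|u_\lambda|\leq (c\bb^2)^{|\lambda|}$, depending only on $\vecbeta_{\lambda_\omega}$. I would therefore organize the expansion by \emph{how far back} the dependence reaches. Unfold the recursion for $v$ from \eqref{e.ui}. For $\sign\,\vecbeta_k=-1$,
\begin{align*}
	v_k=\frac{s_{k_+}(s'_k+v_{k_-})}{4s_{k_+}-s'_k-v_{k_-}}\ ,
\end{align*}
which equals $\tfrac{s'_k+v_{k_-}}{4}$ times $\big(1-\tfrac{s'_k+v_{k_-}}{4s_{k_+}}\big)^{-1}$. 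Since $s'_k+v_{k_-}\leq 2t_k\leq 2\bb^2 s_{k_+}$, this is again a convergent series. For $\vecbeta_k=(+,+)$ we have $v_k=v_{k_-}$, and for $(-,-)$ we have $v_k=0$, which truncates the dependence.

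Then, by induction on $i$, I would prove that $v_{i_-}/s_i$ admits an expansion $\sum_{j} \tilde u_{[j,i_-)}$. Each term depends only on $\vecbeta$ over $[j,i_-]_\omega$ and is bounded by $(c\bb^2)^{|[j,i_-]|}$ times a fixed factor. The bound comes from each step back gaining the ratio $t_{k-1}/t_k=\bb^2$, or more if there are $\omega$-gaps, since the index distance grows. I would then substitute into the geometric expansion of $1/(1-\delta/(s_i-w))$ and collect all products of such terms according to the earliest index $j$ they involve. The number of compositions contributing to a given $j$ grows at most like $2^{|\lambda|}$, which is absorbed by enlarging $c$. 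The term with no $\delta$ is $u_\emptyset=1$, attached to $j=i_-$.

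The main obstacle is the bookkeeping. We must ensure that the $(+,+)$ case, where $v$ is passed along unchanged without gaining a factor $\bb^2$, still yields decay in $|\lambda|$. This works because $v_k\leq s'_{k'}$ for the last index $k'$ where the chain was not $(+,+)$, so the smallness is measured by $t_{k'}/t_{i-1}\leq \bb^{2(i-1-k')}$. I will attribute the lost factors of $\bb^2$ along the $(+,+)$ stretch to that ratio. This uses only $t_i=\e^2\bb^{-2i}$ from \eqref{e.ti}.
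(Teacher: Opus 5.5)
Your proposal is correct and follows essentially the paper's route. The paper first proves by induction that $v_i=\sum_{j\in[m,i]_\omega}v_{ij}(\vecbeta_{[j,i]_\omega})$ with the absolute bound $|v_{ij}|\le t_j$; this makes the $(+,+)$ case trivial ($v_{ij}=v_{i_-\,j}$), so your ``main obstacle'' disappears. It then Taylor-expands $1/(s_i-u_{i_-})$ in $v_{(i_-)_-}$ and regroups by the earliest index $j$, gaining $t_j/t_{i-1}\le\bb^{2|\lambda(i,j)|}$.

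One small correction: $t_k\le\bb^2 s_{k_+}$ is false when $k_+=k+1$, so $(s'_k+v_{k_-})/(4s_{k_+})$ is only bounded by $(1+\bb^2)/4$, not by $O(\bb^2)$. This still gives convergence, and only the part $v_{k_-}/s_{k_+}<\bb^2$ needs to be small, so nothing breaks.
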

\begin{proof}
To begin, we use induction on $i\in[m,n]_\omega$ to show that
\begin{align}
	\label{e.vi.induction}
	v_i(\vecbeta_{[m,i]_{\omega}})
	=
	\sum_{j\in[m,i]_\omega} v_{ij}(\vecbeta_{[j,i]_{\omega}})\ ,
	\qquad
	|v_{ij}| \leq t_j\ .
\end{align}
To encode the iteration in \eqref{e.ui}, let $f(s',s,\vecbeta,v):= v\ind_{\vecbeta=(+,+)} + s(s'+v)\ind_{\sign\,\vecbeta=-1}/(4s-s'-v)$.
For $i=m$, simply take $v_{mm}(\vecbeta_m)=f(s'_m,s_{m_+},\vecbeta_m,0)$.
For $i>m$, use the iteration in \eqref{e.ui} and induction hypothesis to write
\begin{align}
	v_i(\vecbeta_{[m,i]_{\omega}})
	=
	f\Big( s'_{i},s_{i_+},\vecbeta_{i}, \sum_{j\in[m,i_-]_\omega} v_{i_-\, j}(\vecbeta_{[j,i_-]_{\omega}}) \Big)\ .
\end{align}
When $\vecbeta_i=(+,+)$ or $(-,-)$, the statement \eqref{e.vi.induction} indeed follows.
When $\vecbeta_i=(+,-)$ or $(-,+)$, Taylor expand the right hand side to get
\begin{align}
	\label{e.vi.taylor}
	\frac{s_{i_+}s'_{i}}{ 4s_{i_+}-s'_{i} }
	\Big( 1 + \frac{\sum_{j\in[m,i_-]_\omega} v_{i_-\, j}(\vecbeta_{[j,i_-]_{\omega}}) }{s'_i} \Big)
	\sum_{k\geq 0} \Big( \frac{\sum_{j\in[m,i_-]_\omega} v_{i_-\, j}(\vecbeta_{[j,i_-]_{\omega}})}{4s_{i_+}-s'_{i}} \Big)^{k}\ .
\end{align}
We want \eqref{e.vi.taylor} to be the right hand side of \eqref{e.vi.induction} for $\vecbeta_i=(+,-)$ or $(-,+)$.
Expand \eqref{e.vi.taylor}.
In the result, let $(s_{i_+}s'_{i})/(4s_{i_+}-s'_{i})$ be $v_{ii}(+,-)=v_{ii}(-,+)$.
Then, collect all the remaining terms that depend only on $\vecbeta_{[i_-,i_-]}$ and let $v_{ii_-}(\vecbeta_{i_-},+,-)=v_{ii_-}(\vecbeta_{i_-},-,+)$ be the sum of those terms.
Progress similarly to define $v_{ij}$ in decreasing order of $j$.
Given that $s'_{i} < t_{i} < s_{i_+}$ and given the exponential scale of the $t_j$s in \eqref{e.ti}, by taking $\bb$ small enough (independent of $i$) and using the induction hypothesis $|v_{i_-\,j}| \leq t_j$, it is straightforward (though tedious) to check that $|v_{ij}| \leq t_j$.

Having obtained \eqref{e.vi.induction}, let us use it to prove the desired statement.
In the definition of $u_i$ in \eqref{e.ui}, Taylor expand in $v_{i_-}$, insert \eqref{e.vi.induction} into the result, and expand accordingly.
From the result, construct $u_{[j,i_{-})_{\omega}}$ in decreasing order of $j$ similarly to how we constructed $v_{ij}$, starting with $u_{[i_-,i_-)}:=1$.
Given that $s'_{i_-} < t_{i_-} < s_{i}$ and by taking $\bb$ small enough (independent of $i$), it is straightforward (though tedious) to check that the remaining $u$s satisfy the claimed bound.
\end{proof}

Equipped with Lemma~\ref{l.ui}, we next bound the sums over the $\vecbeta$s in \eqref{e.ui}, thereby giving a bound on $|B_{\omega}|$ that no longer has the sums over the $\vecbeta$s.
Put $\inull:=(n+1)_-:=\max[m,n+1)\setminus\omega$ to simplify notation.
For $\lambda\subset [m,\inull_-)_{\omega}$, let 
\begin{align}
	\label{e.g.omega.lambda}
	&\tilg_{\omega,\lambda, i_-}(s',s)
	:=
	\begin{cases}
		\frac{1}{s-s'} - \frac{1}{s}\ ,
	\\
		\frac{1}{s-s'}\ ,
	\\
		\frac{1}{s-s'}\ ,		
	\end{cases}
	\
	g_{\omega,\lambda, i_-}
	:=
	\tilg_{\omega,\lambda, i_-}
	\begin{cases}
		1\ ,				& i_- \in [m,n)_{\omega}\setminus\lambda\ ,
	\\
		\bb^{2(i-i_-)}\ ,	& i_- \in \lambda\ ,
	\\
		1\ ,	& i_-=n \text{ or } i =m\ ,
	\end{cases}	
\\
	\label{e.G.lambda}
	&G_{\omega,\lambda}
	:=
	\prod_{i\in[m,n]_{\omega}} 
	\int_{t_{i-1}<s_i<s'_i<t_i} \d s_{i} \, \d s'_{i} \
	g_{\omega,\lambda,i_-}(s'_{i_-},s_{i})\,\jfn(s'_{i}-s_{i})
	\cdot
	g_{\omega,\lambda,\inull}(s'_{\inull},s_{n+1}) \ ,
\end{align} 
with the convention that $s'_{m_-}:=0$ and $s_{n+1}:=t_n+t_nr^2$.

\begin{lem}\label{l.B.bd.}
There exists $c$ such that, for $\omega\subset [m,n]\subset[1,N-1]$ with $m<n$ and $\bb\leq 1/c$,
\begin{align}
\label{e.B.bd.}
	|B_{\omega}|
	\leq
	c^{n-m+1} \sum_{\lambda\subset[m,\inull_-)_{\omega}} G_{\omega,\lambda}\ .
\end{align}
\end{lem}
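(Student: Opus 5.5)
Starting from the formula \eqref{e.B} for $B_\omega$, I will substitute Lemma~\ref{l.ui} into each factor $\frac{1}{s_i-u_{i_-}}$ for $i\in(m,n]_\omega$, and also into the terminal factor $\frac{1}{4\pi(s_{n+1}-u_{\inull})}$. Each such factor then becomes
\[
\frac{1}{s_i-w_{i_-}(\vecbeta_{i_-})}\sum_{j\in[m,i_-]}u_{\lambda(i,j)},
\]
where $|u_{\lambda(i,j)}|\le (c\bb^2)^{|\lambda(i,j)|}$.

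The main gain is that the leading factor $\frac{1}{s_i-w_{i_-}(\vecbeta_{i_-})}$ depends only on $\vecbeta_{i_-}$. So in the term where every factor takes $j=i_-$ (all $u_\emptyset=1$), the sums over the $\vecbeta$s factorize. For $i_-\in[m,n)_\omega$, using \eqref{e.wi}, I will compute
\[
\sum_{\vecbeta}\frac{\sign\,\vecbeta}{s-w(\vecbeta)}
=\frac{1}{s-s'}+\frac1s-\frac{2}{s-s'/4}.
\]
I will bound this by $c\,(\frac{1}{s-s'}-\frac1s)$. Both sides vanish to first order in $s'/s$: the left side is $\frac{s'}{s^2}(1-\tfrac12)+O(s'^2/s^3)$ and the right side is $\frac{s'}{s(s-s')}$. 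Moreover $s-s'$ is comparable to $s$, because $s'\le t_{i_-}\le \bb^2 t_{i-1}<\bb^2 s$. This cancellation is exactly the first case of $\tilg$, and it is what produces the gain $\frac{1}{s-s'}-\frac1s\approx s'/s^2$.

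The boundary cases are simpler. When $i_-=n$ there is a single $\vecbeta=(+,+)$, giving $\frac{1}{s-s'}$. When $i=m$ the factor is $1/s_m=1/(s_m-s'_{m_-})$, using $s'_{m_-}=0$. Both match $\tilg$. The terminal factor with $n\in\omega$ is handled in the same way.

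For the remaining terms, some factor uses $j<i_-$, so it carries $u_{\lambda(i,j)}$ and couples $\vecbeta$s across several indices. I will give up the sign cancellation at index $i_-$ for such factors and bound the sum over $\vecbeta$ by absolute values. This gives at most $4\cdot\frac{c}{s-s'}$ per index, using $s-w\ge s-s'$, which matches the second case of $\tilg$. The lost cancellation is paid for by the factor $(c\bb^2)^{|\lambda(i,j)|}$.

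The bookkeeping I intend is to let $\lambda\subset[m,\inull_-)_\omega$ record the indices $i_-$ at which the cancellation is abandoned. For each such index, the available factor $\bb^{2|\lambda(i,j)|}$ with $|\lambda(i,j)|\ge 1$ will be traded for a factor $\bb^{2(i-i_-)}$, using that the $t$'s are exponentially spaced. When $i-i_->1$, indices in $\omega$ lie in between, and their (empty) contribution together with $s-s'\ge t_{i-1}$ should supply the extra powers of $\bb$. This is the second case of $g$.

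The main obstacle is this combinatorial step. Each $u_{\lambda(i,j)}$ involves the $\vecbeta$s on all of $\lambda(i,j)_\omega$, so the terms do not simply factor index by index. I have to make sure that for every expanded term I can choose a set $\lambda$ on which I take absolute values, with all other indices keeping their cancellation and factorizing. I will handle this by taking $\lambda$ to be the union of the $\lambda(i,j)$ that appear in the term: inside $\lambda$ every $\vecbeta$ is summed in absolute value, and outside $\lambda$ the $\vecbeta$s appear only in their own leading factor and still cancel.

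The total weight is $\prod_i (c\bb^2)^{|\lambda(i,j)|}$, which is at least as small as $c^{\#}\prod_{i_-\in\lambda}\bb^2$. The number of choices of $j$ for each $i$ is controlled by summing the geometric series $\sum_j (c\bb^2)^{i_--j}\le 2$ for $\bb\le 1/c$. Together with the factor $4$ per index coming from the $\vecbeta$ sums, this gives the overall constant $c^{n-m+1}$ and the claimed bound $\sum_{\lambda} G_{\omega,\lambda}$ after restoring the time integrals and the $\jfn$ factors.
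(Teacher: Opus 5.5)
Your overall scheme is the paper's own. You substitute Lemma~\ref{l.ui} into every factor, expand over the choice of $j$ in each factor (the paper's $\delta\in\Delta$), and take $\lambda$ to be the $\omega$-restricted union of the $\lambda(i,j)$ that occur. You then sum $\vecbeta_{i_-}$ exactly for $i_-\notin\lambda$ and in absolute value for $i_-\in\lambda$.

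One small correction: $s'_{i_-}\le \bb^2 s_i$ is false when $i_-=i-1$. You do not need it, because for all $0\le s'<s$
\[
\frac{1}{s-s'}+\frac{1}{s}-\frac{2}{s-s'/4}=\Big(\frac{1}{s-s'}-\frac{1}{s}\Big)\frac{1/2+s'/(4s)}{1-s'/(4s)}\le \frac{1}{s-s'}-\frac{1}{s}\ .
\]

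The genuine gap is the power counting. Let $h=(h_-)_+$ denote the successor of $h_-$ in $[m,n+1]\setminus\omega$. The lemma needs a factor $\bb^{2(h-h_-)}$ for every $h_-\in\lambda$, i.e.\ a total of $\bb^{2k_\lambda}$ with $k_\lambda=\sum_{h_-\in\lambda}(h-h_-)$. This exceeds $\bb^{2|\lambda|}$ as soon as an $\omega$-index follows some $h_-$. Your final count secures only $\prod_{h_-\in\lambda}\bb^{2}$.

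The source you propose for the remaining powers, $s-s'\ge t_{i-1}$, does not supply them. The factor attached to $h_-\in\lambda$ is already $\frac{1}{s_h-s'_{h_-}}\approx \frac{1}{s_h}$ and carries no further smallness. The weaker bound also cannot be reabsorbed into the cancellation terms, since $s'_{h_-}$ may be far below $t_{h_-}$. Lemma~\ref{l.B.bd} needs exactly these powers to produce $\bb^{2|\omega|}$.

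The missing observation is that $|\lambda(i,j)|=|[j,i_-)|$ counts all integers, including those in $\omega$. If $h_-\in[j,i_-)$, then $h\le i_-$ because $i_-\notin\omega$, so $[h_-,h)\subset[j,i_-)$. Hence $\bigcup_{(i,j)\in\delta}[j,i_-)\supset\bigcup_{h_-\in\lambda}[h_-,h)$, which is a disjoint union of cardinality $k_\lambda$, and so $\sum_{(i,j)\in\delta}|\lambda(i,j)|\ge k_\lambda$.

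You must then sum over all $\delta$ with $\Lambda(\delta)_\omega=\lambda$ while keeping this power; summing a geometric series $\le 2$ per factor throws it away. Put $a=c\bb^2\le 1/2$ and $M=|[m,n]_\omega|$. Writing $a^{k}=(2a)^{k}2^{-k}$ gives
\[
\sum_{k_1+\cdots+k_M\ge k_\lambda}\ \prod_{i=1}^{M} a^{k_i}\le 2^{M}(2a)^{k_\lambda}.
\]
This turns your $\tilg$-integrand into exactly the $g$-integrand of $G_{\omega,\lambda}$, at the cost of $c^{n-m+1}$.
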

\begin{proof}
We begin with some expansions.
Take product of the sum in \eqref{e.ui.expansion} over $i\in[m,n]_{\omega}=[m,\inull]_{\omega}$ and expand the result to get
\begin{align}
	\label{e.distributing}
	\prod_{i\in[m,n]_{\omega}} \sum_{j\in[m,i_-]_{\omega}} u_{\lambda(i,j)}
	=
	\sum_{\delta\in\Delta} U_{\delta}\ ,
	\qquad
	U_{\delta} := \prod_{(i,j)\in\delta} u_{\lambda(i,j)}\ ,
\end{align}
where $\Delta$ consists of sets of the form $\delta=\{ (i,j_i) \, |\, i\in[m,n]_{\omega}\ , j_i\in[m,i_-]_{\omega} \}$.
Let $\Lambda(\delta):=\bigcup_{(i,j)\in\delta}\lambda(i,j)$ so that $U_{\delta} = U_{\delta}(\vecbeta_{\Lambda(\delta)_{\omega}})$.
Since $u_{\lambda(i,j)}=u_{\lambda(i,j)}(\vecbeta_{\lambda(i,j)_{\omega}})$, we have $U_{\delta} = U_{\delta}(\vecbeta_{\Lambda(\delta)_{\omega}})$.
Now, insert \eqref{e.ui.expansion} into \eqref{e.B} and expand the result according to \eqref{e.distributing}.
Recalling that $\inull:=(n+1)_-$ and letting
\begin{subequations}
\label{e.B'.expansion}
\begin{align}
	B_{\omega,\delta}
	:=&
	\prod_{i\in[m,n]_{\omega}} 
	\int_{t_{i-1}<s_i<s'_i<t_i} \d s_{i} \, \d s'_{i}
	\cdot
	\frac{1}{4\pi s_{m}} \jfn(s'_{m}-s_{m})
\\
	&\cdot
	\prod_{i\in(m,n]_{\omega}} 
	\sum_{\vecbeta_{i_-}} 
	\frac{\sign\,\vecbeta_{i_-}}{s_{i}-w_{i_-}(\vecbeta_{i_-})} \jfn(s'_{i}-s_{i})	
\\
	&\cdot
	\Big(\sum_{\vecbeta_{\inull}}\sign(\vecbeta_{\inull})\Big)^{\ind_{n\in\omega}}\frac{1}{{t_n+t_nr^2-w_{\inull}(\vecbeta_{\inull})}} \,
	U_{\delta}(\vecbeta_{\Lambda(\delta)_{\omega}})\ ,
\end{align} 
\end{subequations}
we have $B_{\omega}= \sum_{\delta\in\Delta} B_{\omega,\delta}$ and in particular $|B_{\omega}|\leq \sum_{\delta\in\Delta} |B_{\omega,\delta}|$.
Let us decompose the last sum according to $\Lambda(\delta)_{\omega}$.
Note that $[m,n]_{\omega}=[m,\inull]_{\omega}$ because $\inull=(n+1)_-$\ .
Since $\Lambda(\delta)$ is a union of sets of the form $\lambda(i,j):=[j_i,i_-)$ with $i\in [m,n]_{\omega}=[m,\inull]_{\omega}$, the set $\Lambda(\delta)_{\omega}$ is contained in $[m,\inull_-)_{\omega}$.
Accordingly, we write
\begin{align}
	\label{e.B.B'.bd}
	|B_{\omega}|\leq \sum_{\delta\in\Delta} |B_{\omega,\delta}|
	=
	\sum_{\lambda\subset[m,\inull_-)_{\omega}}\sum_{\delta:\Lambda(\delta)_{\omega}=\lambda} |B_{\omega,\delta}|\ .
\end{align}

Let us bound the inner sum in \eqref{e.B.B'.bd}.
Take $\lambda$ and $\delta$ with $\Lambda(\delta)_{\omega}=\lambda$.
In \eqref{e.B'.expansion} and for $i_-\notin\lambda$, the only term that depends on $\vecbeta_{i_-}$ is $w_{i_-}$, so the sum over $\vecbeta_{i_-}$ can be evaluated independently of other factors in \eqref{e.B'.expansion}. 
Evaluate the sum and bound the absolute value of the result by the expression $ 1/s_i+(1-2)/(s_i-s'_{i_-})=\tilg_{\omega,\lambda,i_-}(s'_{i_-},s_i)$ (see \eqref{e.wi} and \eqref{e.g.omega.lambda}).
Next, for every $i_-\in\lambda\cup\{n\}$, bound $|\sign\,\vecbeta_{i_-}|\leq 1$ and $|1/(s_i-u_{i_-})| \leq 1/(s_i-s'_{i_-})=\tilg_{\omega,\lambda,i_-}(s'_{i_-},s_i)$.
Finally, note that $1/s_m=\tilg_{\omega,\lambda,m_-}(s'_{m_-},s_{m})$ by definition.
Let $\tilG_{\omega,\lambda}$ be obtained by replacing every $g$ in \eqref{e.G.lambda} with $\tilg$.
We have $|B_{\omega,\delta}| \leq 4^{n-m+1}\,\tilG_{\omega,\lambda}\,|U_{\delta}(\vecbeta_{\lambda})|$, and therefore
\begin{align}
	\label{e.B.B'.bd.}
	\sum_{\delta:\Lambda(\delta)_{\omega}=\lambda} |B_{\omega,\delta}| 
	\leq 
	4^{n-m+1}\,\tilG_{\omega,\lambda} \sum_{\delta:\Lambda(\delta)_{\omega}=\lambda} |U_{\delta}(\vecbeta_{\lambda})|\ .
\end{align}

To complete the proof, let us bound the last sum in \eqref{e.B.B'.bd.}.
First, note that since $g$ and $\tilg$ differs only by the multiplicative factors in \eqref{e.g.omega.lambda}, letting $k_{\lambda}:=\sum_{i_-\in\lambda}(i-i_-)$, we have $\tilG_{\omega,\lambda}=\bb^{-2k_{\lambda}}G_{\omega,\lambda}$.
Next, write $\Lambda(\delta)_{\omega}=\lambda$ more explicitly as
$
	\bigcup_{(i,j)\in\delta} [j,i_-)_{\omega} = \lambda
$
and observe that dropping the $\omega$ subscript (whereby increasing the set $[j,i_-)_{\omega}$ to $[j,i_-)$) leads to
\begin{align}
	\label{e.union.relation}
	\bigcup_{(i,j)\in\delta} [j,i_-) \supset \bigcup\big\{ [h_-,h) \, \big| \, h_-\in\lambda \big\}\ ,
\end{align}
because every $h_-\in\lambda=\Lambda(\delta)_{\omega}$ belongs to $[j,i_-)$ for some $(i,j)\in\delta$ and $h_-<i_-$ implies $h\leq i_-$.
The set on the right hand side of \eqref{e.union.relation} has $k_{\lambda}$ elements.
Given \eqref{e.union.relation} and the bound $|u_{\lambda}|\leq (c\bb^2)^{|\lambda|}$ from Lemma~\ref{l.ui}, referring back to the definition of $U_{\delta}$ in \eqref{e.distributing}, we have
\begin{align}
	\sum_{\delta:\Lambda(\delta)_\omega=\lambda} |U_{\delta}|
	=
	\sum_{\delta:\Lambda(\delta)_\omega=\lambda} \prod_{(i,j)\in\delta}|u_{\lambda(i,j)}|
	\leq
	\prod_{i\in[m,n]_{\omega}}\sum_{k_i\geq 0} (c\bb^2)^{k_i} \cdot \ind_{\sum k_i \geq k_{\lambda}}\ ,
\end{align}
where we use $k_i:=i_--j_i$ to parameterize the cardinality of $\lambda(i,j)$.
The right hand side evaluates to $\sum_{k\geq k_{\lambda}} \binom{|[m,n]_{\omega}|+k-1}{k} a^{k}$ for $a=c\bb^2$ and is bounded by $2^{n-m+1} (2a)^{k_\lambda}$ for $|a| \leq 1/2$.
Insert this bound into \eqref{e.B.B'.bd.}, substitute in $\tilG_{\omega,\lambda}=\bb^{-2k_{\lambda}}G_{\omega,\lambda}$ and insert the result into \eqref{e.B.B'.bd}.
Doing so gives the desired result~\eqref{e.B.bd.}.
\end{proof}

Next, we proceed to bound the time integrals in $G_{\omega,\lambda}$.
The bound in Lemma~\ref{l.B.bd.} took care of the sums over the $\vecbeta$s, but the time integrals remained in the bound, as seen in the definition \eqref{e.G.lambda} of $G_{\omega,\lambda}$.
Our next step is to bound the time integrals.
To reiterate, we have assumed $|\theta|\leq c_0$ and will mostly omit showing the dependence on $\theta$ or $c_0$.
In particular, $G_{\omega,\lambda}=G_{\omega,\lambda}^{\theta}$ and the $c$ in \eqref{e.B.bd} depends on $c_0$.

\begin{lem}\label{l.B.bd}
There exists $c$ such that, for $\omega\subset [m,n]\subset[1,N-1]$ with $m<n$, $\lambda\subset [m,\inull_-)_{\omega}$, $\bb\leq 1/c$, and $r\in(0,\infty)$,
\begin{align}
\label{e.B.bd}
	G_{\omega,\lambda}
	\leq
	c^{n-m+1}\,
	\prod_{i\in [m,n]_{\omega}} \frac{1}{\log t_i^{-1}}
	\cdot
	\frac{ \bb^{2|\omega|}\, \log \bb^{-1} }{t_n(1+r^2)^{2\ind_{n\in\omega}}}
	\Big(\log\frac{1+r^2}{r^2}\Big)^{\ind_{n\notin\omega}}
	\ .	
\end{align}
\end{lem}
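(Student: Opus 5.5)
We will bound $G_{\omega,\lambda}$ by integrating out the time variables one index at a time, in increasing order of $i\in[m,n]_\omega$, and tracking the factor contributed by each index. For each $i\in[m,n]_\omega$ the integrand involves $\jfn(s'_i-s_i)$, the factor $g_{\omega,\lambda,i_-}(s'_{i_-},s_i)$ linking $i$ to its predecessor, and the factor $g_{\omega,\lambda,i}(s'_i,s_{i_+})$ linking it to its successor. The plan is to bound the link factors pointwise by quantities depending on only one of the two variables, so that the integral factorizes into a product over $i$ of single-interval integrals.

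\textbf{Step 1: pointwise bounds on the link factors.} Since $s'_{i_-}<t_{i_-}\le t_{i-1}<s_i$ and the $t_j$'s are exponentially spaced, we have $s'_{i_-}\le t_{i-1}\bb^{2(i-1-i_-)}$. In the gap $s_i-t_{i-1}$ there are two cases.
\begin{itemize}
\item If $i_-=i-1$ (no skipped indices), then $1/(s_i-s'_{i_-})$ can be as large as $1/(s_i-t_{i-1})$. This near-diagonal singularity must be paired with $\jfn$.
\item If $i-i_->1$ (indices of $\omega$ skipped), then $s_i-s'_{i_-}\ge s_i(1-\bb^2)$. In the first case of $\tilg$ we get $\frac{1}{s-s'}-\frac1s=\frac{s'}{s(s-s')}\le c\,\bb^{2(i-i_-)}\,\frac{1}{s_i}$, using $s'/s\le t_{i_-}/t_{i-1}=\bb^{2(i-1-i_-)}$. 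In the case $i_-\in\lambda$, the explicit factor $\bb^{2(i-i_-)}$ is supplied directly.
\end{itemize}
Each skipped index thus yields a factor $\bb^2$ up to constants, giving the $\bb^{2|\omega|}$ (at worst $\bb^{2(i-i_--1)}$, absorbed with $c^{n-m+1}$). Note also that $\frac1s$-type factors cost $1/t_{i-1}$.

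\textbf{Step 2: single-interval integrals.} For each $i$ we will show
\[
\int_{t_{i-1}<s_i<s'_i<t_i}\frac{\jfn(s'_i-s_i)}{s_i-t_{i-1}+\cdots}\,\frac{\d s_i\,\d s'_i}{\text{(outgoing factor)}}\lesssim \frac{1}{\log t_i^{-1}},
\]
after the outgoing factor $1/(s_{i_+}-s'_i)$ has been handed to the next index. Concretely, we integrate $s'_i$ first using \eqref{e.jfn.intbd}, which gives $\int_0^{t_i}\jfn\le c/\log t_i^{-1}$. The incoming singularity $1/(s_i-s'_{i_-})$ is integrated over $s_i\in(t_{i-1},t_i)$ together with the $\d s'_{i_-}$ weight from the previous step. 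This produces at most $\log(t_i/t_{i-1})=2\log\bb^{-1}$ when the singular pairing chains.

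Accumulating $\log\bb^{-1}$ at every index would be too lossy. To avoid it we use the case structure of $\tilg$: when $i_-\notin\lambda$ and $i_-<n$ the difference $\frac1{s-s'}-\frac1s$ cancels the $1/s$ part, leaving $s'/(s(s-s'))$. Integrating in $s'_{i_-}$ near $s_i$ against $\jfn$ yields the correct normalization $1/s_i$ times $O(1)$, with the log appearing only once. This matches the single $\log\bb^{-1}$ in \eqref{e.B.bd}.

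\textbf{Step 3: the terminal factor.} The last factor $g(s'_{\inull},s_{n+1})$ with $s_{n+1}=t_n(1+r^2)$ splits into two cases.
\begin{itemize}
\item If $n\in\omega$, then $s_{n+1}-s'_{\inull}\ge t_n(1+r^2)(1-\bb^2)$ and the gap $n-\inull\ge1$ gives $\bb^{2}$-type decay. The sum over $\vecbeta_{\inull}$ in $B$ (already replaced by the difference form in $\tilg$) gives the extra cancellation yielding $(1+r^2)^{-2}$.
\item If $n\notin\omega$, we integrate $1/(t_n(1+r^2)-s'_n)$ over $s'_n<t_n$ weighted by the $s_n$-marginal. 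This yields $t_n^{-1}\log\frac{1+r^2}{r^2}$, mirroring \eqref{e.Z.hkk}.
\end{itemize}

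\textbf{Main obstacle.} The hard step is Step 2's bookkeeping: preventing a factor $\log\bb^{-1}$ (or a $1/(s_i-t_{i-1})$ divergence) from accruing at every index. We would try first to prove an inductive bound of the form
\[
\int(\cdots)\,\d s_{\le i}\,\d s'_{\le i}\le C^{\,i-m}\,\prod_{j\le i}\frac{1}{\log t_j^{-1}}\cdot\frac{h_i(s'_i)}{t_i},
\]
with $h_i$ an explicit profile integrable against $1/(s-s')$ with $O(1)$ cost. Establishing this uniformly in $i$ is the step we expect to require the most care.
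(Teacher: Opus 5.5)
Your proposal is a plan rather than a proof. The step you defer as the ``main obstacle'' is essentially the whole lemma, and you never specify the profile $h_i$ or show that it reproduces itself.

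The paper integrates in \emph{decreasing} order of $i$, starting from $\inull$, and carries the logarithmic profile $\phi_i(s'):=\log\frac{2}{1-s'/t_i}$. The key one-step estimate is
\[
\int_{t_{i-1}<s<s'<t_i}\d s\,\d s'\ g_{\omega,\lambda,i_-}(s'_{i_-},s)\,\jfn(s'-s)\,\phi_i(s')
\le
\frac{c\,\bb^{2(i-1-i_-)}}{\log t_i^{-1}}\,\phi_{i-1}(s'_{i_-})\ .
\]
It is proved by splitting first the $s'$-domain and then the $s$-domain at midpoints, using the upper bound in \eqref{e.jfn.estimate}. The profile $\phi_{i-1}$ records the consecutive-link singularity as $s'_{i-1}\uparrow t_{i-1}$. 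That singularity couples $s'_{i-1}$ with $s_i$ across adjacent intervals, so it cannot be ``paired with $\jfn$'', which couples $s_i$ with $s'_i$ inside one interval. For the same reason, no pointwise bound by one-variable functions, as in your Step~1, can capture it. With this profile, the single $\log\bb^{-1}$ arises only at $i=m$, where $g=1/s_m$ carries no subtraction.

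For the last interval the paper also uses a device you lack. It dilates the three gaps by $v\in[1/2,1]$ and averages over $v$, so the integral factorizes. This yields the $t_n^{-1}\log\frac{1+r^2}{r^2}$ factor and the initial profile fed into the recursion. Your claim that weighting by ``the $s_n$-marginal'' gives this factor is asserted, not shown, since that marginal is neither uniform nor independent of $s'_{\inull_-}$.

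Your Step~1 bound for skipped indices also fails as used. Bounding $\frac{1}{s-s'}-\frac1s\le c\,\bb^{2(i-1-i_-)}/s_i$ (the exponent is $i-1-i_-$, not $i-i_-$) discards the $1/s^2$ decay in $s$. Integrating $1/s_i$ over $(t_{i-1},t_i)$ then costs a factor $\log\bb^{-1}$ at every gap. That factor cannot be absorbed into $c^{n-m+1}$ uniformly in $\bb$. Trading it via $\bb^2\log\bb^{-1}\le c$ loses one power of $\bb^2$ per gap, which destroys the $\bb^{2|\omega|}$ in the statement.

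The subtracted kernel has to be integrated exactly:
\[
\int_{t_{i-1}}^{t_i}\Big(\frac{1}{s-s'}-\frac1s\Big)\d s=\log\frac{1-s'/t_i}{1-s'/t_{i-1}}\le c\,\frac{s'}{t_{i-1}}\le c\,\bb^{2(i-1-i_-)}\quad\text{for } i_-<i-1\ .
\]
This is what \eqref{e.B.bd.iter0} exploits. Conversely, when $i_-\in\lambda$ there is no subtraction, and the link genuinely costs $\log\bb^{-1}$ even for $i_-=i-1$. That cost is paid by the extra power in $\bb^{2(i-i_-)}$ through $\bb^2\log\bb^{-1}\le c$, and your proposal does not account for it.
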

\begin{proof}
The proof amounts to bounding the integrals in \eqref{e.G.lambda}.
We consider $m+1<n$ only; the case $m+1=n$ can be handled the same way. 
Recall that $\inull:=(n+1)_-:=\max[m,n]_{\omega}$.

We start by bounding the last integral in \eqref{e.G.lambda}, namely the $i=\inull$ integral
\begin{align}
	\label{e.G.lambda.inull}
	\int_{t_{\inull-1}<s<s'<t_{\inull}} \d s \d s' \
	g_{\omega,\lambda,\inull_-}(s'_{\inull_-},s)\,\jfn(s'-s)\,g_{\omega,\lambda,\inull}(s',t_n+t_nr^2) \ ,
\end{align}
where we renamed the dummy variables $s_{\inull},s'_{\inull}$ to $s,s'$ and substituted in $s_{n+1}:=t_n+t_nr^2$.
Let $u:=s-t_{\inull-1}$, $u':=s'-s$, and $u'':= t_{\inull}-s'$, take any $v\in[1/2,1]$, and perform the change of variables $u,u',u''\mapsto \frac{1}{v}u, \frac{1}{v}u',\frac{1}{v}u''$ to get
\begin{align}
	\label{e.G.lambda.inull.}
	\eqref{e.G.lambda.inull}
	=
	\int \frac{\d u \d u'}{v^2} \
	g_{\omega,\lambda,\inull_-}\big(s'_{\inull_-},\tfrac{1}{v}u+t_{\inull-1}\big) \,
	\jfn\big(\tfrac{1}{v}u'\big) \,
	g_{\omega,\lambda,\inull}\big(t_{\inull}-\tfrac{1}{v}u'',t_n+t_nr^2\big) \ ,
\end{align}
where the integral runs over $u+u'+u''=v(t_{\inull}-t_{\inull-1})$.
Let us bound the three factors in the integrand (after $1/v^2$).
Given that $v \geq 1/2$, using \eqref{e.g.omega.lambda} for $i_-=\inull_-$ and combining the two fractions there when $\inull_-\notin\lambda\cup\{n\}$, we see that the first factor is bounded by $4 g_{\omega,\lambda,\inull_-}(\frac{1}{2}s'_{\inull_-},u+\frac{1}{2}t_{\inull-1})$.
As for the second factor, referring back to \eqref{e.jfn}, we see that $\jfn^\theta(\frac{1}{v}u')=v\,\jfn^{\theta-\log v}(u') \leq \jfn^{\theta+\log 2}(u')$.
To bound the last factor, consider the cases $n\in\omega$ and $n\notin\omega$ separately.
Given that $\inull\notin\lambda$ and that $\inull<n$ in the former case, we use the first expression in \eqref{e.g.omega.lambda} for $i_-=\inull$, combine the two fractions there, and use $s'_{\inull} \leq t_{\inull} = t_{n}\bb^{2n-2\inull}$ and $\inull<n$.
Doing so shows that the last factor in \eqref{e.G.lambda.inull} is bounded by $c\,\bb^{2n-2\inull}/t_n(1+r^2)^{2}$.
In the latter case $n\notin\omega$, we have $\inull=n$.
Using the last expression in \eqref{e.g.omega.lambda} for $i_-=\inull=n$ and using $v\leq 1$ show that the last factor in \eqref{e.G.lambda.inull} is bounded by $1/(t_nr^2+u'')$.
Combining these bounds and bounding the $1/v^2$ in \eqref{e.G.lambda.inull.} by $4$, we arrive at
\begin{align}
	\label{e.G.lambda.inull..}
	\eqref{e.G.lambda.inull}
	\leq
	c\, \bb^{2n-2\inull}
	\int \d u \d u'\
	g_{\omega,\lambda,\inull_-}\big(\tfrac{s'_{\inull_-}}{2},u+\tfrac{t_{\inull-1}}{2}\big) \,
	\jfn^{\theta+\log 2}(u') \,
	\begin{cases}
		\frac{1}{t_n(1+r^2)^{2}} & n\in\omega\ ,
	\\
		\frac{1}{t_nr^2+u''} & n\notin\omega\ ,
	\end{cases}
\end{align}
where the integral runs over $u+u'+u''=v(t_{\inull}-t_{\inull-1})$.
This bound holds for all $v\in[1/2,1]$.
Let us average it over $v\in[1/2,1]$.
In the averaged bound, perform the change of variables $(u,u',v)\mapsto (u,u',u''')$ where $u''':=v(t_{\inull}-t_{\inull-1})-u-u'$, bound the resulting Jacobian as $(t_{\inull}-t_{\inull-1})^{-1}\leq c t_{\inull}^{-1}$, release the range of integration to $u,u',u'''\in[0,t_{\inull}]$ so that the resulting integral \emph{factorizes}.
Evaluating and bounding the factorized integral with the aid of \eqref{e.jfn.intbd} and noting that $\inull_-\notin\lambda$ show that 
\begin{align}
	\label{e.B.bd.last}
	\eqref{e.G.lambda.inull}
	\leq
	&c\,
	\log \frac{1-s'_{\inull_-}/(2t_{\inull}+t_{\inull-1})}{1-s'_{\inull_-}/t_{\inull-1}} 
	\frac{\bb^{2(n-\inull)}}{t_n(1+r^2)^{2\ind_{n\in\omega}}} \Big( \log \frac{r^2+1}{r^2} \Big)^{\ind_{n\notin\omega}}\ .
\end{align} 

We now complete the proof by evaluating and bounding the remaining integrals in \eqref{e.G.lambda}.
The following bounds hold.
\begin{align}
	\label{e.B.bd.iter0}
	&\log \frac{1-s'_{i_-}/(at_{i}+t)}{1-s'_{i_-}/t_{i-1}} + \frac{s'_{i_-}}{t_{i}} 
	\leq
	c\, \bb^{2(i-1-i_-)}\log \frac{2}{1-s'_{i_-}/t_{i-1}}\ ,
	\quad
	a\geq 1\ ,\ t\geq 0\ ,
\\
	\label{e.B.bd.iter1}
	\begin{split}
	&\int_{t_{i-1}<s<s'<t_{i}} \d s \d s' \
	g_{\omega,\lambda,i_-}(s'_{i_-},s) \jfn(s'-s) \log\frac{2}{1-s'/t_{i}}
\\
	&\qquad\leq\frac{c}{\log t_{i}^{-1}}
	\begin{cases}
		\log\frac{1-s'_{i_-}/t_{i}}{1-s'_{i_-}/t_{i-1}} + \frac{s'_{i_-}}{t_{i}}\ , & i_- \in [m,n)_{\omega}\setminus\lambda\ ,
	\\		
		\bb^{2(i-i_-)}\log\frac{2}{\bb^2-s'_{i_-}/t_{i}}
		\ , & i_-\in\lambda\ ,
	\\		
		\log\bb^{-1}\ , & i=m\ .
	\end{cases}
	\end{split}
\end{align}
The first bound follows by considering $i_-<i-1$ and $i_-=i-1$ separately, bounding the left hand side by $c\,\bb^{2(i-1-i_-)}$ in the first case, and bounding $1-s'_{i_-}/(at_{i}+t)\leq 1$ in the second case.
The second bound follows by bounding the integral over $s'$ first, dividing the domain into $s<s'<(t_i+s)/2$ and $(t_i+s)/2<s<t_i$ and using the upper bound in \eqref{e.jfn.estimate} with $\epsilon_+(c_0,t)\leq c(c_0)$, and bounding the integral over $s'$ later, again dividing the domain into $t_{i-1}<s<(t_{i-1}+t_{i})/2$ and $(t_{i-1}+t_{i})/2<s<t_i$.
Next, for the case $i_-\in\lambda$ in \eqref{e.B.bd.iter1}, write $\bb^2-s'_{i_-}/t_{i}=\bb^{2}(1-s'_{i_-}/t_{i-1})$ and use $\bb^{2}\log\bb^{-2}\leq c$ to bound the expression in the brace together by $c\,\bb^{2(i-1-i_-)}\log 2/(1-s'_{i_-}/t_{i-1})$.
Combining the resulting bounds with \eqref{e.B.bd.iter0} and letting $\phi_i(s'):=\log 2/(1-s'/t_{i})$ to simplify notation give
\begin{align}
	\label{e.B.bd.iter1.}
	&\int_{t_{i-1}<s<s'<t_{i}} \d s \d s' \
	g_{\omega,\lambda,i_-}(s'_{i_-},s) \jfn(s'-s) \phi_{i}(s')
	\leq
	\frac{c\,\bb^{2(i-1-i_-)}}{\log t_{i}^{-1}}\phi_{i-1}(s'_{i_-})\ ,
	\ i\in(m,n)_{\omega}\ .
\end{align}
Use \eqref{e.B.bd.iter0} in \eqref{e.B.bd.last}, then use \eqref{e.B.bd.iter1.} in decreasing order of $i$, and use \eqref{e.B.bd.iter1} for $i=m$.
In the result, recognize $n-\inull+\sum_{i\in(m,\inull]_{\omega}}(i-1-i_-)$ as $|\omega|$.
Doing so gives the desired bound \eqref{e.B.bd}.
\end{proof}

We now derive the needed bounds based on previous results.
Let 
\begin{align}
	\label{e.DZz'}
	\DZz'_{I}:=\hk_{m-1}\Ldot D_{m}\Cdot\cdots\Cdot D_{n-1}\Cdot Z_{n}\ ,\qquad I=[m,n]\ .
\end{align}
This is a random signed measure whose total variation has finite positive moments, namely $\E(|\DZz'_{I}|1)^k<\infty$, as can be seen by expressing the $D$ in \eqref{e.DZz'} in terms of $Z$ and the heat kernel.
Let $\hkk(t,x_1,x_2):=\exp(-|x_1-x_2|^2/4t)$, where $x_i\in\R^2$, and note that by the heat semigroup property,
\begin{align}
	\label{e.hkk.id}
	4\pi t \int_{\R^2} \d y \prod_{j=1,2}\hk(t,x_j-y)
	=
	\hkk(t,x_1,x_2)\ .
\end{align}

\begin{cor}\label{c.DZ.bd}
There exists $c$ such that, for $I=[i_I,j_I]\subset [1,N-\ell]$ with $|I|>1$, for $\bb\leq 1/c$, and for $\ell\in[1,N]$, 
\begin{align}
\label{e.DZz.bd}
	4\pi t_{j_{I}} r^2 \int_{\R^2} \d y\, \E\,\DZz_{I,r}(y)^2
	&\leq
	\frac{ 
		c^{|I|}
	}{
	 (\log \bb^{-1})^{|I|-1}
	}
	r^2\log \frac{r^2+1}{r^2}\ ,
\\
\label{e.DZ.bd}
	\E\,\DZ_{I}{}^2
	&\leq
	\frac{ 
		c^{|I|}
	}{
	 (\log \bb^{-1})^{|I|-1} (N-j_{I})^{2}
	}\ .
\end{align}
\end{cor}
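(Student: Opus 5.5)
For \eqref{e.DZz.bd}, I would combine Lemmas~\ref{l.DZz.int}, \ref{l.B.bd.} and \ref{l.B.bd}. Write $I=[m,n]$. By Lemma~\ref{l.DZz.int}, the integral equals $\sum_{\omega\subset(m,n]}B_\omega$. Lemma~\ref{l.B.bd.} bounds each $|B_\omega|$ by $c^{n-m+1}\sum_{\lambda}G_{\omega,\lambda}$, and there are at most $2^{n-m+1}$ choices of $\lambda$. Lemma~\ref{l.B.bd} then bounds each $G_{\omega,\lambda}$.

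After multiplying by $4\pi t_n r^2$, the $1/t_n$ cancels. Each $\omega$ then contributes
\begin{align*}
	c^{n-m+1}\prod_{i\in[m,n]_\omega}(\log t_i^{-1})^{-1}\,\bb^{2|\omega|}\log\bb^{-1}\cdot r^2(1+r^2)^{-2\ind_{n\in\omega}}\Big(\log\frac{1+r^2}{r^2}\Big)^{\ind_{n\notin\omega}}\ .
\end{align*}
I would handle the $r$-dependence with the elementary inequality $r^2/(1+r^2)^2\le r^2\log\frac{1+r^2}{r^2}$. This holds because $\log(1+r^{-2})\ge 1/(1+r^2)$. So in both cases the $r$-factor is at most $r^2\log\frac{1+r^2}{r^2}$.

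Next, since $i\le N-1$ we have $\log t_i^{-1}=2(N-i)\log\bb^{-1}\ge 2\log\bb^{-1}$. Hence the product is at most $(\log\bb^{-1})^{-(|I|-|\omega|)}$. Combined with the factor $\log\bb^{-1}$, this gives $(\log\bb^{-1})^{-(|I|-1)}\,(\bb^2\log\bb^{-1})^{|\omega|}$. Since $\bb^2\log\bb^{-1}\le 1$, summing over the at most $2^{|I|}$ subsets $\omega$ only changes $c$. This proves \eqref{e.DZz.bd}.

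For \eqref{e.DZ.bd} I need the extra $(N-j_I)^{-2}$. I would write $\DZ_I=\DZz'_I\Rdot 1=\DZz'_I\Rdot\big(\hk_{n,r,\cdot}*1\big)=\int\d y\,\DZz_{I,r}(y)$, which does not help directly. Instead, I would use the fact that $1=\hk_{n,r,y}$ integrated over $y$, and Cauchy--Schwarz against a weight. A better route is to bound $\E\DZ_I^2$ directly by setting $r^2=\infty$ formally. The $y$-integral of $\DZz_{I,r}(y)^2$ times $4\pi t_nr^2$ converges as $r\to\infty$ to $(\DZz'_I\Rdot 1)^2$, by \eqref{e.hkk.id}, since $\hkk(t_nr^2,\cdot)\to 1$. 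The resulting bound would then be $c^{|I|}(\log\bb^{-1})^{1-|I|}$, with $r^2\log(1+r^{-2})\to1$.

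The missing factor $(N-j_I)^{-2}$ must come from sharper accounting. In Lemma~\ref{l.B.bd} I discarded $\prod(\log t_i^{-1})^{-1}$ only down to $(\log\bb^{-1})^{-1}$ per factor, whereas in fact it equals $\prod_i\big(2(N-i)\log\bb^{-1}\big)^{-1}$. When $n\notin\omega$, the factors $i=n$ and $i=\inull$ give $(N-n)^{-1}$. I would try to extract $(N-j_I)^{-2}$ from the two largest indices of $[m,n]_\omega$. When $\inull<n$, I would use $\bb^{2(n-\inull)}\le c^{n-\inull}/(N-\inull)\cdot(N-n)^{-1}$-type trades: $\bb^{2k}$ decays faster than any polynomial loss $(N-n+k)/(N-n)\le 1+k$. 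This is the main obstacle. A single factor $(N-n)^{-1}$ is visible, but the second factor appears to need the $r\to\infty$ limit to carry an extra $1/\log t_n^{-1}$ from the last $\jfn$ integral, via \eqref{e.jfn.intbd} evaluated at the scale $t_n$. I would try to redo the final step \eqref{e.B.bd.last} with $r=\infty$, where the last factor $g_{\omega,\lambda,\inull}$ becomes $1/(s_{n+1}-u)\to$ a constant after the limiting normalization. This should yield one more $(\log t_{\inull}^{-1})^{-1}$ in place of the $\log\frac{1+r^2}{r^2}$ factor, and hence the claimed $(N-j_I)^{-2}$ after absorbing constants.
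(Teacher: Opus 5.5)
Your proof of \eqref{e.DZz.bd} is correct and follows the paper. You combine Lemmas~\ref{l.DZz.int}, \ref{l.B.bd.} and \ref{l.B.bd}, absorb the $r$-dependence via $\frac{1}{1+r^2}\le\log\frac{1+r^2}{r^2}$, and use $\log t_i^{-1}\ge 2(N-i)\log\bb^{-1}$ with $N-i\ge1$ and $\bb^2\log\bb^{-1}\le c$.

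For \eqref{e.DZ.bd}, however, there is a genuine gap. You correctly see that the left side of \eqref{e.DZz.bd} tends to $\E\,\DZ_I^2$ as $r\to\infty$. (To make this rigorous, swap $\int\d y$ with $\E\,\DZz'_{I}{}^{\otimes 2}$, use \eqref{e.hkk.id}, and apply dominated convergence using the finite moments of the total variation of $\DZz'_I$.) But you take the limit only after replacing every $\log t_i^{-1}$ by $2\log\bb^{-1}$, which throws away exactly the $(N-i)$-dependence you need.

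Your proposed repairs do not work. An inequality like $\bb^{2(n-\inull)}\le c^{n-\inull}(N-\inull)^{-1}(N-n)^{-1}$ is false: the left side does not depend on $\e$, while $N-n$ can be of order $N\to\infty$. Redoing \eqref{e.B.bd.last} at $r=\infty$ would only reproduce the factor $1/\log t_n^{-1}$ that \eqref{e.B.bd} already contains when $n\notin\omega$.

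What is missing is bookkeeping in the bound you already have. Keep, for each $\omega\subset(m,n]$, the contribution
\[
	c^{|I|}\prod_{i\in[m,n]_\omega}\frac{1}{\log t_i^{-1}}\cdot\bb^{2|\omega|}\log\bb^{-1}\cdot
	\begin{cases}
		r^2(1+r^2)^{-2}\ , & n\in\omega\ ,\\
		r^2\log\frac{1+r^2}{r^2}\ , & n\notin\omega\ ,
	\end{cases}
\]
and let $r\to\infty$ in this expression, not in its crude majorant. The terms with $n\in\omega$ vanish because $r^2(1+r^2)^{-2}\to0$. In the remaining terms the $r$-factor tends to $1$. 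In these surviving terms both $m$ and $n$ lie in $[m,n]_\omega$, since $m\notin\omega$ always. So the product contains
\[
	\frac{1}{\log t_m^{-1}\,\log t_n^{-1}}
	\le
	\frac{1}{(2\log\bb^{-1})^2(N-m)(N-n)}
	\le
	\frac{1}{(2\log\bb^{-1})^2(N-n)^2}\ .
\]
Bounding the other factors exactly as in your first part then gives $c^{|I|}(\log\bb^{-1})^{-(|I|-1)}(N-j_I)^{-2}$.

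In particular, the second factor $(N-n)^{-1}$ you were looking for comes for free from the index $m$. No refined estimate of the last time integral is needed. This is how the paper concludes: in the product form \eqref{e.DZz.bd.}, the $i=n$ factor $\frac{1}{\log t_n^{-1}}+\frac{\bb^2}{1+r^2}$ tends to $\frac{1}{\log t_n^{-1}}$, and the $i=m$ factor supplies the other $(N-n)^{-1}$.
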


\begin{proof}
In this proof, we write $m=i_I$ and $n=j_I$ to simplify notation.

To obtain \eqref{e.DZz.bd}, multiply both sides of \eqref{e.B.bd} by $4\pi t_nr^2$, use $\frac{1}{1+r^2}\leq \log \frac{1+r^2}{r^2}$ (because $\log(1-x)\leq -x$ for $x=\frac{1}{1+r^2}$), sum both sides over $\omega\subset(m,n]$ and $\lambda\subset[m,\inull_-)_{\omega}$, insert the result into \eqref{e.Dzz=sumB}.
Doing so gives
\begin{align}
	\label{e.DZz.bd.}
	4\pi t_n r^2 \int_{\R^2} \d y\, \E\,\DZz_{I,r}(y)^2
	\leq
	c^{|I|}\log\bb^{-1} 
	\prod_{i\in I}\Big( \frac{1}{\log t_{i}^{-1}} + \frac{\ind_{i>m}\,\bb^{2}}{(1+r^2)^{\ind_{i=n}}} \Big)
	\cdot
	r^2\log \frac{r^2+1}{r^2}\ .
\end{align}
The desired bound \eqref{e.DZz.bd} follows by using 
\begin{align}
	\label{e.logti}
	\log t_i^{-1} = 2\log\bb^{-1}\cdot\Big(\frac{\log\e^{-1}}{\log\bb^{-1}}-i\Big) \geq 2\log\bb^{-1}\cdot(N-i)\ ,
	&&
	\bb^{2} \leq \frac{c}{\log\bb^{-1}}\ ,
\end{align}
and $N-i \geq 1$ for $i\in I$.

The second bound \eqref{e.DZ.bd} follows by sending $r\to\infty$ in \eqref{e.DZz.bd.}.
Send $r\to\infty$ on the right hand side of \eqref{e.DZz.bd.}, use \eqref{e.logti}, use $N-i \geq N-n=N-j_I$ for $i=m,n$, and use $N-i \geq 1$ for $i\in(m,n)$.
Doing so shows that the limit of the right hand side of \eqref{e.DZz.bd.} is bounded by that of \eqref{e.DZ.bd}.
As for the left hand side, let $f_r(y,x_1,x_2) := 4 \pi t_nr^2 \hk(t_nr^2,x_1-y)\hk(t_nr^2,x_2-y)$ and write the left hand side of \eqref{e.DZz.bd.} as $\int \d y\, \E\,\DZz'_{I}{}^{\otimes 2} \, f_r(y,\cdott,\cdott)$.
Given the property mentioned after \eqref{e.DZz'}, we swap the integral with $\E\,\DZz'_{I}{}^{\otimes 2}$, use \eqref{e.hkk.id} for $t=t_nr^2$, and send $r\to\infty$ with the aid of the dominated convergence theorem.
Doing so shows that the left hand side of \eqref{e.DZz.bd.} converges to that of \eqref{e.DZ.bd}.
\end{proof}

\section{Decoupling}
\label{s.decoupling}
The main goal in this section is to prove Proposition~\ref{p.decoupling.bd}, which asserts that the sum in \eqref{e.decoupling} is smaller than $1/2$ on the event $\Wevent$ (defined in \eqref{e.Wevent}) when $\ell \log\bb^{-1}$ is large enough.
Recall $\scrI=\scrI(N,\ell)=\scrI(N_{\e,\bb},\ell)$ from before \eqref{e.scrI}.
Let $\filt_{i}$ be the sigma algebra generated by $Z_{s,t}$ for $s\leq t\in[t_{i-1},t_{i}]$ and note that by Axiom~\ref{d.shf.inde}, $\filt_1,\ldots,\filt_{N-\ell}$ are independent.
To reiterate, we have assumed $|\theta|\leq c_0$ and will mostly omit showing the dependence on $\theta$.
\begin{prop}\label{p.decoupling.bd}
There exists $c=c(c_0)$ such that for all $\bb \leq 1/c$ and $\ell\in[1,N]$\ ,
\begin{align}
	\label{e.decoupling.bd}
	\E\Big( 
		\sum_{\calI\in\scrI}
		\prod_{I\in\calI} 
		\frac{
			\ind_{\Wevent}\ \DZz_{I}
		}{
			(\hk_{i_{I}-1} \Ldot Z_{i_{I}} \Rdot 1) \cdots (\hk_{j_{I}-1} \Ldot Z_{j_{I}} \Rdot 1) 
	}\Big)^2
	\leq
	\frac{c}{(\ell\log\bb^{-1})^{1/2}}\ .
\end{align}
\end{prop}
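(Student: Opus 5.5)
The plan is to control the sum through a backward recursion in the starting index of the intervals. A crude triangle inequality over $\calI\in\scrI$ does not suffice: it gives a factor $\prod_{j}(1+c(\log\bb^{-1})^{-1/2}(N-j)^{-1})$, which diverges like a power of $N/\ell$. The key is to use the near mean-zero structure of $\hk_{m-1}\Ldot D_m$, so that contributions add in squares, giving $\sum_j (N-j)^{-2}\sim\ell^{-1}$ rather than $\sum_j (N-j)^{-1}\sim\log(N/\ell)$.

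First I localize the indicator. Bound $\ind_{\Wevent}\le\prod_{I\in\calI}\ind_{\Wevent_I}$ with $\Wevent_I:=\cap_{i\in I}\Wevent_i$. After this replacement, each block
\begin{align*}
F_I:=\ind_{\Wevent_I}\DZz_I\big/\big((\hk_{i_I-1}\Ldot Z_{i_I}\Rdot 1)\cdots(\hk_{j_I-1}\Ldot Z_{j_I}\Rdot 1)\big)
\end{align*}
is measurable with respect to $\filt_{i_I},\ldots,\filt_{j_I}$, so blocks attached to disjoint intervals are independent. On $\Wevent_I$ the denominator is at least $2^{-|I|}$. Hence Corollary~\ref{c.DZ.bd} gives $\E F_I^2\le 4^{|I|}\E\DZz_I^2\le c^{|I|}(\log\bb^{-1})^{-(|I|-1)}(N-j_I)^{-2}$.

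Next I set up the recursion. Let $S_k$ be the sum in \eqref{e.decoupling.bd} (with the localized indicators) restricted to collections $\calI$ whose intervals all lie in $[k,N-\ell]$. Splitting according to whether some interval starts at $k$ gives
\begin{align*}
S_k=S_{k+1}+\Delta_k,\qquad \Delta_k:=\sum_{I=[k,j],\,j\ge k+1}F_I\,(1+S_{j+1}).
\end{align*}
Here $S_{j+1}$ is independent of $F_I$. Therefore
\begin{align*}
\E\Delta_k^2\le\Big(\sum_{j>k}\|F_{[k,j]}\|_2\,\|1+S_{j+1}\|_2\Big)^2\le \frac{c}{\log\bb^{-1}\,(N-k)^2}\,\sup_{j>k}\E(1+S_{j+1})^2 ,
\end{align*}
where the geometric factor $c^{|I|/2}(\log\bb^{-1})^{-(|I|-1)/2}$ has been summed for $\bb\le 1/c$. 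Expanding $\E S_k^2=\E S_{k+1}^2+\E\Delta_k^2+2\E S_{k+1}\Delta_k$, the only delicate term is the cross term.

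The cross term is where the mean-zero structure enters. Condition on $\filt_{k+1},\filt_{k+2},\ldots$. The only $\filt_k$-dependence in $\Delta_k$ is through the measure $\hk_{k-1}\Ldot D_k$ multiplied by $\ind_{\Wevent_k}/(1+X_k)$, where $X_k:=\hk_{k-1}\Ldot W_k\Rdot 1$; everything else is shared. Since $\hk_{k-1}\Ldot\Ebar_k=0$, the measure $\hk_{k-1}\Ldot D_k$ has mean zero. So the conditional mean of $\Delta_k$ comes only from the correction $\ind_{\Wevent_k}/(1+X_k)-1=-\ind_{\Wevent_k}X_k/(1+X_k)-\ind_{\Wevent_k^\compl}$.

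I will write $\Delta_k=\Delta_k^{\mathrm{mart}}+\Delta_k^{\mathrm{bias}}$, where $\Delta_k^{\mathrm{mart}}$ has zero conditional mean, so $\E S_{k+1}\Delta^{\mathrm{mart}}_k=0$. The bias part is handled by Cauchy--Schwarz, using $\E X_k^2\le c/(N-k)$ from \eqref{e.W.2ndmom} and the fourth-moment bound \eqref{e.Wi.mom} for $\P[\Wevent_k^\compl]$. To keep the test function spatially resolved I use the $r$-version \eqref{e.DZz.bd} rather than only \eqref{e.DZ.bd}. This should give
\begin{align*}
\|\Delta_k^{\mathrm{bias}}\|_2\le \frac{c}{(\log\bb^{-1})^{1/2}(N-k)^{3/2}}\,\sup_{j>k}\|1+S_{j+1}\|_2 .
\end{align*}

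Finally I close the recursion. Setting $a_k:=\sup_{k'\ge k}\E S_{k'}^2$, the bounds combine to
\begin{align*}
a_k\le a_{k+1}+\frac{c(1+a_{k+1})}{\log\bb^{-1}(N-k)^2}+\frac{c\,\sqrt{a_{k+1}}\,(1+\sqrt{a_{k+1}})}{(\log\bb^{-1})^{1/2}(N-k)^{3/2}} .
\end{align*}
Starting from $a_{N-\ell+1}=0$ and summing over $k\le N-\ell$, I use $\sum_{k\le N-\ell}(N-k)^{-3/2}\le c\,\ell^{-1/2}$. A bootstrap first shows that $a_k\le 1$ throughout once $\ell\log\bb^{-1}$ is large; for smaller $\ell\log\bb^{-1}$ the claimed bound is trivial after enlarging $c$. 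The bootstrap then yields $a_1\le c/(\ell\log\bb^{-1})^{1/2}$, the $(N-k)^{-3/2}$ bias term being what dictates the square-root rate in \eqref{e.decoupling.bd}.

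The main obstacle is the bias estimate. The quantities $\ind_{\Wevent_k}/(1+X_k)$ and $\hk_{k-1}\Ldot D_k$ are correlated through $\filt_k$, and I must show that the covariance, after being propagated through $\Cdot D_{k+1}\Cdot\cdots\Cdot Z_j\Rdot 1$ and $(1+S_{j+1})$, still gains the extra factor $(N-k)^{-1/2}$. My first attempt is to expand $X_k/(1+X_k)=X_k-X_k^2/(1+X_k)$. For the linear term I compute the mixed second moment $\E[X_k\,\hk_{k-1}\Ldot D_k]$ explicitly via \eqref{e.sgg2}, as in Lemma~\ref{l.DZz.int}, and check that after the subsequent propagation it contributes only $O((\log\bb^{-1})^{-1/2}(N-k)^{-3/2})$. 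The quadratic and $\Wevent_k^\compl$ remainders are then bounded with Hölder's inequality and the higher moments \eqref{e.Wi.mom}.
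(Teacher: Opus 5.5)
Your proposal is correct, but it is organized differently from the paper's proof. The paper expands the square as $\sum_{\calI,\calJ\in\scrI}$ and groups indices into the connected components $K$ of the overlap graph of $\calI\cup\calJ$. It then factorizes the expectation over components by independence and bounds each factor by $c^{|K|}(\log\bb^{-1})^{-(|K|-1)/2}(N-j_K)^{-3/2}$. When both $\calI$ and $\calJ$ cover $K$, this uses plain Cauchy--Schwarz. When only one side covers $K$, it uses the identity $\hk_{i-1}\Ldot\E D_i=0$ plus Cauchy--Schwarz against $\hk_{i_K-1}\Ldot W_{i_K}\Rdot 1$. It closes with a counting argument: at most $c^{|K|}$ pairs per component structure, the factor $1/|\vecu|!$ from ordering the components, and the integer-partition bound.

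Your backward recursion $S_k=S_{k+1}+\Delta_k$ uses the same two inputs: Corollary~\ref{c.DZ.bd}, and the mean-zero identity together with $\E X_k^2\le c/(N-k)$. It encodes the orthogonality in the split $\Delta_k=\Delta_k^{\mathrm{mart}}+\Delta_k^{\mathrm{bias}}$ and closes by a discrete Gronwall argument. What this buys you:
\begin{enumerate}
\item The combinatorics (overlap graph, pair counting, partitions) disappear entirely.
\item If you keep the factor $\sqrt{a_{k+1}}$ in the cross term, it yields the sharper bound $c/(\ell\log\bb^{-1})$.
\end{enumerate}
The paper's component framework has the advantage that it is reused almost verbatim for Lemma~\ref{l.decoupling'}. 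Your recursion would adapt there too, by changing the terminal term.

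Three corrections to your write-up.

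First, the ``main obstacle'' is not one. Pointwise, $|\ind_{\Wevent_k}/(1+X_k)-1|\le 2|X_k|$, and $X_k$ is independent of $\filt_{k+1},\ldots,\filt_{N-\ell}$. So conditional Cauchy--Schwarz gives $\|\Delta_k^{\mathrm{bias}}\|_2\le 2\|X_k\|_2\,\|V_k\|_2$, where $V_k$ is $\Delta_k$ with the factor $\ind_{\Wevent_k}/(1+X_k)$ removed (so $\E[V_k\,|\,\filt_{k+1},\ldots]=0$). You bound $\|V_k\|_2$ through \eqref{e.DZ.bd} exactly as you bound $\|\Delta_k\|_2$. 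This is precisely the paper's Case~2. You do not need a mixed-moment computation via \eqref{e.sgg2} or the $r$-resolved bound \eqref{e.DZz.bd}. Because Cauchy--Schwarz is applied conditionally, only $\E V_k^2$ appears, so no higher moments of $\DZ_I$ are needed; those would not be available.

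Second, you should not localize $\ind_{\Wevent}$ termwise inside a signed sum. Instead, note that $\ind_{\Wevent}$ is common to every term, so the sum equals $\ind_{\Wevent}S_1$ with $S_1$ built from the local indicators, and $\E(\ind_{\Wevent}S_1)^2\le\E S_1^2$.

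Third, the regime of small $\ell\log\bb^{-1}$ is vacuous rather than trivial, since $\bb\le 1/c$ forces $\ell\log\bb^{-1}\ge\log c$. Alternatively, write your recursion as $1+a_k\le(1+a_{k+1})(1+c\,\epsilon_k)$ with $\sum_k\epsilon_k\le c(\ell\log\bb^{-1})^{-1/2}$; it then closes without any bootstrap.
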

\begin{proof}
We begin with a preliminary expansion.
Let $F_I$ denote the fraction in \eqref{e.decoupling.bd} and write the left hand side of \eqref{e.decoupling.bd} as $\sum_{\calI,\calJ\in\scrI} \E [\prod_{I\in \calI} F_I \cdot \prod_{J\in\calJ} F_J]$.
We seek to break the expectation into a product by the independence of $\filt_{1},\ldots,\filt_{N-\ell}$.
View $1,\ldots,N-\ell$ as vertices, put an edge between two vertices if they both belong to the same $I\in\calI$ or $J\in\calJ$.
This gives a graph, which we call $G_{\calI,\calJ}$, and its connected components give a partition of $[1,N-\ell]\cap\Z$ into intervals.
The singleton intervals contain no elements in $\bigcup\calI\cup\bigcup\calJ$, because the intervals in $\calI$ and $\calJ$ have at least two elements (by \eqref{e.scrI}).
We discard the singleton intervals and let 
\begin{align}
	\label{e.calK_calI,calJ}
	\calK_{\calI,\calJ} :=
	\big\{ K \, | \, K \text{ a connected component of } G_{\calI,\calJ} \text{ with } |K| \geq 2 \big\}\ .
\end{align}
By the independence of  $\filt_{1},\ldots,\filt_{N-\ell}$, $\E[ \prod_{I\in \calI} F_I \cdot \prod_{J\in\calJ} F_J]=\prod_{K\in\calK_{\calI,\calJ}} A_{K,\calI,\calJ}$, where
\begin{align}
	\label{e.decoupling.bd.A}
	A_{K,\calI,\calJ} := \E \Big[ \prod_{I\in \calI, I\subset K} F_I \cdot \prod_{J\in\calJ, J\subset K} F_J \Big]\ ,
\end{align}
and therefore
\begin{align}
	\label{e.decoupling.bd.expansion}
	\text{lhs of } \eqref{e.decoupling.bd}
	=
	\sum_{\calI,\calJ\in\scrI} \prod_{K\in\calK_{\calI,\calJ}} A_{K,\calI,\calJ}\ .
\end{align}

Writing $c=c(c_0)$ to simplify notation, we show that the summand in \eqref{e.decoupling.bd.expansion} satisfies
\begin{align}
	\label{e.decoupling.bd.Abd}
	A_{K,\calI,\calJ} 
	\leq 
	\frac{1}{(N-j_{K})^{3/2}} \frac{c^{|K|}}{ (\log\bb^{-1})^{(|K|-1)/2} }
	\ ,
	\quad
	K = [i_K,j_K]\ .
\end{align}
In \eqref{e.decoupling.bd.A}, consider the case where neither product is empty ($\prod_{\emptyset}:=1$) and the complement case.
\begin{description}[leftmargin=5pt]
\item[Case~1, neither product in \eqref{e.decoupling.bd.A} is empty] 
Apply the Cauchy--Schwarz inequality in \eqref{e.decoupling.bd.A} and bound $(\hk_{i-1}\Ldot Z_{i} \Rdot 1 )\ind_{i}\geq 1/2$ using the indicator $\ind_{\Wevent}$ to get
$
	A_{K,\calI,\calJ}
	\leq
	c^{|K|}
	(
		\prod_{I\in\calI,I\subset K} \E\,{\DZ_I}^2 \cdot \prod_{J\in\calJ,J\subset K} \E\,{\DZ_J}^2
	)^{1/2}
$.
By the definition of $\calK_{\calI,\calJ}$, every element of $K$ is contained in at least one of the $I$s or $J$s.
By the definition of $\scrI$, every $I\in\calI$ or $J\in\calJ$ contains at least two elements.
Given these properties, applying \eqref{e.DZ.bd} leads to \eqref{e.decoupling.bd.Abd}, with a better power of $1/(N-j_{K})^{2}$ in fact.
\item[Case~2, one product in \eqref{e.decoupling.bd.A} is empty]
In this case, since the intervals in $\calI$ are disjoint and the same holds for $\calJ$, $A_{K,\calI,\calJ}=\E F_{K}$.
Recall from \eqref{e.Wevent} that $\ind_{\Wevent}=\prod_{i}\ind_{\Wevent_i}$.
Use this and $\hk_{i-1}\Ldot Z_{i} \Rdot 1=1+\hk_{i-1}\Ldot W_{i} \Rdot 1$ for $i=i_K$ to write
\begin{align}
	\label{e.dcoupling.bd.F}
	F_K
	=
	\DZ_{K} \Big( 
		1 - \Big(
			\ind_{\Wevent^\compl_{i_K}}+\frac{\hk_{i_K-1}\Ldot W_{i_K} \Rdot 1}{\hk_{i_K-1}\Ldot Z_{i_K} \Rdot 1}\ind_{\Wevent_{i_K}}
			\Big)\Big)
	\prod_{i\in(i_K,j_K]} \frac{\ind_{\Wevent_i}}{\hk_{i-1}\Ldot Z_{i} \Rdot 1}
	\ .
\end{align}
Let $\E_{i}=\E[\cdott\,|\,\ldots,\filt_{i-1},\filt_{i+1},\ldots]$ and take $\E_{i_K}$ on both sides of \eqref{e.dcoupling.bd.F}.
Using $\hk_{i-1}\Ldot \E_{i} D_{i} = \hk_{i-1}\Ldot (\heatsg_{t_{i-1},t_{i}} - \hk_{i-1}) = 0$ for $i=i_K$ shows that the first $1$ in \eqref{e.dcoupling.bd.F} does not contribute to the result.
Further taking $\E$ gives
\begin{align}
	\label{e.dcoupling.bd.EF}
	\E F_K
	=
	\E\ 
	\DZ_{K} \Big( 
		-\ind_{\Wevent^\compl_{i_K}}-\frac{\hk_{i_K-1}\Ldot W_{i_K} \Rdot 1}{\hk_{i_K-1}\Ldot Z_{i_K} \Rdot 1}\ind_{\Wevent_{i_K}}
			\Big)
	\prod_{i\in(i_K,j_K]} \frac{\ind_{\Wevent_i}}{\hk_{i-1}\Ldot Z_{i} \Rdot 1}
	\ .
\end{align}
Bound the terms within the parenthesis in absolute value by $4|\hk_{i_K-1}\Ldot W_{i_K} \Rdot 1|$, bound the last products by $2^{|K|-1}$, apply the Cauchy--Schwarz inequality with the aid of \eqref{e.DZ} (note that $|K|\geq 2$) and $\E(\hk_{i_K-1}\Ldot W_{i_K} \Rdot 1)^2 \leq c /(N-i_K)\leq c/(N-{j_K})$, the latter of which follows from \eqref{e.W.2ndmom}.
Doing so gives the claim \eqref{e.decoupling.bd.Abd}. 
\end{description}

The rest of the proof consists of combinatorial bounds.
Let $\scrK:=\{\calK_{\calI,\calJ}\,|\,\calI,\calJ\in\scrI\}$, and for each $\calK\in\scrK$ let $\scrI^2_\calK:=\{(\calI,\calJ)\in\scrI^2\,|\,\calK=\calK_{\calI,\calJ}\}$.
Insert \eqref{e.decoupling.bd.Abd} into \eqref{e.decoupling.bd.expansion} to get
\begin{align}
	\label{e.decoupling.bd.expansion.}
	\text{lhs of \eqref{e.decoupling.bd}} 
	\leq
	\sum_{\calK\in\scrK}
	\sum_{(\calI,\calJ)\in\scrI^2_{\calK}}
	\prod_{K\in\calK}
	\frac{1}{(N-j_{K})^{3/2}} 
	\frac{c^{|K|}}{ (\log\bb^{-1})^{(|K|-1)/2} }\ .
\end{align}
Bound the size of the second sum by $\prod_{K\in\calK}(\sum_{i\leq |K|/2}\binom{|K|}{2i})^2 \leq \prod_{K\in\calK} c^{|K|}$, absorb the bound into the existing $c^{|K|}$, and use $\scrK\subset\scrI$ (see \eqref{e.calK_calI,calJ} and \eqref{e.scrI}) to replace the sum over $\calK\in\scrK$ by the sum over $\calK\in\scrI$.
Next, for each $\calK\in\scrI$, given that the intervals in $\calK$ are disjoint, order them according to the order on $\Z$ as $(K_1,\ldots,K_{|\calK|})$, and define $\type\calK:=(|K_1|,\ldots,|K_{|\calK|}|)\in\Z_{\geq 2}^{|\calK|}$.
This allows us to write the sum over $\calK\in\scrI$ as $\sum_{m=2}^{N-\ell}\sum_{\vecu: \sum u_i=m}\sum_{\calK: \type\calK=\vecu}$\ , and therefore
\begin{align}
	\label{e.decoupling.bd.expansion..}
	\text{lhs of \eqref{e.decoupling.bd}} 
	\leq
	\sum_{m=2}^{N-\ell}
	\frac{c^m}{(\log\bb^{-1})^{(m-1)/2}}
	\
	\sum_{\vecu: \sum u_i=m}
	\
	\sum_{\calK: \type\calK =\vecu}
	\
	\prod_{h=1}^{|\vecu|}
	\frac{1}{(N-j_{K_{h}})^{3/2}}\ .
\end{align}
Recall that we ordered the intervals in $\calK$, so $j_{K_1}<j_{K_2}<\ldots\in[1,N-\ell]$.
Hence, the innermost sum is bounded by $c^{|\vecu|}\ell^{-|\vecu|/2}/|\vecu|! \leq c^{m}\ell^{-1/2}/|\vecu|!$\ , and $|\{\vecu\,|\, u_1+\ldots+u_{|\vecu|}=m\}|/|\vecu|!$ is bounded by the number of integer partitions of $m$, which is bounded by $c^{\sqrt{m}}\leq c^{m}$; see~\cite[Equation~(5.1.2)]{andrews1998theory} for example.
Summing over $m$ gives the desired result.
\end{proof}

Combining Propositions~\ref{p.expansion} and \ref{p.decoupling.bd} immediately gives
\begin{cor}\label{c.clt}
There exists $c=c(c_0)$ such that for all $\bb\leq 1/c$ and $\ell\in[1,N]$,
\begin{align}
	\P\Big[ \ \ind_{\Wevent}\Big|\frac{ \hk(\e^{2}) \Ldot Z_{0,t_{N-\ell}} \Rdot 1}{ \prod_{i=1}^{N-\ell} \hk_{i-1} \Ldot Z_{i} \Rdot 1 } - 1 \Big| > \frac{1}{2} \ \Big] 
	\leq 
	\frac{c}{(\ell\log\bb^{-1})^{1/2}}\ .
\end{align}
\end{cor}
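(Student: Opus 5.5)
The plan is to combine the exact identity of Proposition~\ref{p.expansion} with the second-moment bound of Proposition~\ref{p.decoupling.bd}, and then apply Chebyshev's inequality.

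First, multiply both sides of \eqref{e.decoupling} by $\ind_{\Wevent}$. On $\Wevent$, every factor $\hk_{i-1}\Ldot Z_i\Rdot 1$ is at least $1/2$, so all denominators are strictly positive and the identity holds pointwise. Since $\ind_{\Wevent}$ is an indicator, $\ind_{\Wevent}=\ind_{\Wevent}^{k}$ for every $k\geq1$. Hence, for each $\calI\in\scrI$ (which has $|\calI|\geq1$),
\[
\ind_{\Wevent}\prod_{I\in\calI}(\cdots)=\prod_{I\in\calI}\ind_{\Wevent}(\cdots).
\]
This gives
\[
\ind_{\Wevent}\Big(\frac{ \hk(\e^{2}) \Ldot Z_{\e^2,t_{N-\ell}} \Rdot 1}{ \prod_{i=1}^{N-\ell} \hk_{i-1} \Ldot Z_{i} \Rdot 1 } - 1\Big)
=\sum_{\calI\in\scrI}\prod_{I\in\calI}\frac{\ind_{\Wevent}\,\DZz_I}{(\hk_{i_I-1}\Ldot Z_{i_I}\Rdot1)\cdots(\hk_{j_I-1}\Ldot Z_{j_I}\Rdot1)},
\]
where $\DZz_I=\DZ_I$ is the numerator in \eqref{e.decoupling}, as defined in \eqref{e.DZ}. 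Because both sides of the original identity are multiplied by the same indicator, the absolute value inside the probability in the corollary is exactly the absolute value of this sum. (The starting time written as $Z_{0,t_{N-\ell}}$ in the corollary is $Z_{\e^2,t_{N-\ell}}$, following the convention of Proposition~\ref{p.expansion}.)

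Second, apply Chebyshev's inequality with threshold $1/2$:
\[
\P[\,|X|>1/2\,]\leq 4\,\E X^2 .
\]
Proposition~\ref{p.decoupling.bd} bounds $\E X^2$ by $c(\ell\log\bb^{-1})^{-1/2}$ for $\bb\leq 1/c$ and $\ell\in[1,N]$. The factor $4$ is absorbed into the constant $c$, which gives the claim.

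There is no genuine obstacle here; the corollary is a direct consequence of the two propositions. The only point requiring care is that the sum over $\scrI$ is finite, since $N$ is finite for each $\e$, so the term-by-term manipulation is legitimate. Also, on $\Wevent^{\compl}$ the left-hand side is zero and there is nothing to prove.
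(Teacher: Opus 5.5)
Your proof is correct and is exactly the paper's argument: multiply the identity of Proposition~\ref{p.expansion} by $\ind_{\Wevent}$, distribute the indicator into each factor (using $|\calI|\geq 1$), and apply Chebyshev's inequality with the second-moment bound of Proposition~\ref{p.decoupling.bd}. Your reading of $Z_{0,t_{N-\ell}}$ as $Z_{\e^2,t_{N-\ell}}$ is also the intended one, since the expansion starts at $t_0=\e^2$.
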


The rest of this section is devoted to proving Corollary~\ref{c.quenched}, which will be used in Section~\ref{s.last.piece}.
We start by employing the same argument in the proof of Proposition~\ref{p.decoupling.bd} to derive a preliminary bound.
Recall that $\hkk(t,x_1,x_2):=\exp(-|x_1-x_2|^2/4t)$, write $\hkk(t):=\hkk(t,\cdott)$.

\begin{lem}\label{l.decoupling'}
There exists $c=c(c_0)$ such that for all $\bb\leq 1/c$, $r\in(0,\infty)$, and $\ell\in[1,N]$,
\begin{align}
	\label{e.decoupling'}
	\E\,
	\frac{%
		\ind_{\Wevent} ( \hk(\e^2) \Ldot Z_{0,t_{N-\ell}} )^{\otimes 2} \Rdot \hkk(t_{N-\ell}r^2)
	}{%
		(\prod_{i=1}^{N-\ell} \hk_{i-1} \Ldot Z_{i} \Rdot 1)^2
	}
	\leq
	c\,r^2 \log\frac{r^2+1}{r^2}\ .
\end{align}
\end{lem}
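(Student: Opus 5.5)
The plan is to reduce the left-hand side of \eqref{e.decoupling'} to a spatial integral of squares, using \eqref{e.hkk.id}, and then rerun the decoupling argument of Proposition~\ref{p.decoupling.bd}. The only change is that the last interval now carries the test function $\hk_{N-\ell,r,y}=\hk(t_{N-\ell}r^2,\cdott-y)$ instead of $1$.

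\textbf{Step 1: rewrite as an integral over $y$.} By \eqref{e.hkk.id} with $t=t_{N-\ell}r^2$,
\begin{align*}
	(\hk(\e^2)\Ldot Z_{0,t_{N-\ell}})^{\otimes2}\Rdot\hkk(t_{N-\ell}r^2)
	=
	4\pi t_{N-\ell}r^2\int_{\R^2}\d y\,\big(\hk(\e^2)\Ldot Z_{0,t_{N-\ell}}\Rdot \hk_{N-\ell,r,y}\big)^2 .
\end{align*}
The integrand is nonnegative, so Tonelli lets me move $\E$ and the indicator inside the $y$-integral.

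\textbf{Step 2: expand with the $y$-dependent test function.} I will repeat the expansion in the proof of Proposition~\ref{p.expansion} verbatim, but with $\Rdot\hk_{N-\ell,r,y}$ at the end. The last index $N-\ell$ now no longer produces a scalar factor $\hk_{N-\ell-1}\Ldot Z_{N-\ell}\Rdot 1$. Instead, every term contains exactly one final block $[k,N-\ell]$, with $k\in[1,N-\ell]$, contributing $\DZz_{[k,N-\ell],r}(y)$ as in \eqref{e.DZz}. When $k=N-\ell$, this block is simply $\hk_{N-\ell-1}\Ldot Z_{N-\ell}\Rdot\hk_{N-\ell,r,y}$. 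Dividing by the product $\prod_{i}\hk_{i-1}\Ldot Z_i\Rdot1$ gives
\begin{align*}
	\frac{\hk(\e^2)\Ldot Z_{0,t_{N-\ell}}\Rdot \hk_{N-\ell,r,y}}{\prod_{i=1}^{N-\ell}\hk_{i-1}\Ldot Z_i\Rdot 1}
	=\sum_{k=1}^{N-\ell}\Big(1+\sum_{\calI\in\scrI(k-1)}\prod_{I\in\calI}F_I\Big)\,
	\frac{\DZz_{[k,N-\ell],r}(y)}{\prod_{i=k}^{N-\ell}\hk_{i-1}\Ldot Z_i\Rdot 1}.
\end{align*}
Here $\scrI(k-1)$ denotes the collections from \eqref{e.scrI} inside $[1,k-1]$, and the $F_I$ are the fractions of Proposition~\ref{p.decoupling.bd}.

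\textbf{Step 3: square, integrate, and bound by components.} I will square this sum, multiply by $\ind_\Wevent$, take $\E$, and integrate in $y$. As in the proof of Proposition~\ref{p.decoupling.bd}, for each pair of terms I build the graph on $[1,N-\ell]$ and use the independence of $\filt_1,\ldots,\filt_{N-\ell}$ to factor the expectation over connected components. The components not containing $N-\ell$ are controlled exactly by \eqref{e.decoupling.bd.Abd}. The $y$-integral only touches the unique component $K^*$ containing $N-\ell$. This component contains both final blocks $[k,N-\ell]$ and $[k',N-\ell]$, possibly together with intervals of $\calI,\calJ$ chained to them.

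On $K^*$ I will always apply Cauchy--Schwarz, both in $\E$ and in $\int\d y$. I will bound each denominator by $2$ using $\ind_\Wevent$. The factors from intervals away from the end are bounded by \eqref{e.DZ.bd}. The $y$-dependent final blocks are bounded by \eqref{e.DZz.bd}, or by \eqref{e.Z.hkk} when the final block is a singleton. Together with the prefactor $4\pi t_{N-\ell}r^2$, this gives
\begin{align*}
	c^{|K^*|}(\log\bb^{-1})^{-(|K^*|-1)/2}\,r^2\log\tfrac{1+r^2}{r^2}.
\end{align*}
No mean-zero cancellation (Case~2 of Proposition~\ref{p.decoupling.bd}) is needed on $K^*$: I only need an $O(1)$ bound rather than decay, so the crude Cauchy--Schwarz estimate suffices.

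\textbf{Step 4: combinatorics.} The sum over configurations of the components away from $N-\ell$ is bounded exactly as in \eqref{e.decoupling.bd.expansion..}, by $1+c(\ell\log\bb^{-1})^{-1/2}\le c$. The sum over $K^*=[i_{K^*},N-\ell]$ and over the ways of covering it by intervals costs at most $c^{|K^*|}$. This is absorbed once $\bb\le1/c$, because $\sum_{m\ge1}c^m(\log\bb^{-1})^{-(m-1)/2}$ is bounded.

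\textbf{Main obstacle.} I expect the delicate part to be the bookkeeping in Step 3. The two copies' final blocks must land in the same component $K^*$, and the mixed $y$-integral has to be split by Cauchy--Schwarz without losing the factor $r^2\log\frac{1+r^2}{r^2}$. For this, it is essential to pair the $y$-dependent factors, $\int\d y\,\E|XY|\le(\int\d y\,\E X^2)^{1/2}(\int\d y\,\E Y^2)^{1/2}$, with only one $y$-dependent block per copy. That block then carries the full $r$-dependence through \eqref{e.DZz.bd}.
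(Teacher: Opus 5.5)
Your proposal is correct and follows the paper's proof essentially step for step. You rewrite the left-hand side via \eqref{e.hkk.id} as $4\pi t_{N-\ell}r^2\int\d y\,\E(\cdot)^2$ and expand with a final block $[k,N-\ell]$ carrying $\hk_{N-\ell,r,y}$, which is the paper's $\scrI'$ and $M_I(y)$. You then factor over connected components, apply Cauchy--Schwarz with \eqref{e.DZz.bd} and \eqref{e.Z.hkk} on the component containing $N-\ell$ and \eqref{e.decoupling.bd.Abd} elsewhere, and sum over the length of that component separately; the only cosmetic difference is that the paper splits off the lone term $M_{\{N-\ell\}}$ via $(x+y)^2\le 2x^2+2y^2$, while you keep it inside the sum.
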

\begin{proof}
We begin with a decoupling expansion.
First, write $\hk(t_{N-\ell}r^2,\cdott-y)=\hk_{N-\ell,r,y}$ to simplify notation and use \eqref{e.hkk.id} to write
\begin{align}
	\label{e.decoupling'.1}
	\text{lhs of \eqref{e.decoupling'}}
	=
	4 \pi t_{N-\ell} r^2
	\int_{\R^2} \d y \, \E\Big(
	\frac{
		\ind_{\Wevent} \, \hk(\e^2) \Ldot Z_{0,t_{N-\ell}} \Rdot \hk_{N-\ell,r,y}
	}{%
		\prod_{i=1}^{N-\ell} \hk_{i-1} \Ldot Z_{i} \Rdot 1
	}\Big)^2\ .
\end{align}
Next, consider a set $\calI=\{I_1,\ldots,I_{|\calI|}\}$ of disjoint intervals of integers in $[1,N-\ell]$.
We require that an interval in $\calI$ contains $N-\ell$ (there is at most one because the intervals are disjoint), every other interval in $\calI$ has at least two elements, and $\cup\calI$ contains at least two elements.
Letting $\scrI'$ denote the set of all such $\calI$s and
\begin{align}
	M_{I}(y) &:= 
	\begin{cases}
		\DZ_{I}\ , & \text{ when } |I| \geq 2 \text{ and } N-\ell\notin I\ ,
	\\
		\DZz_{I,r}(y)\ , & \text{ when } |I| \geq 2 \text{ and } N-\ell\in I\ ,
	\\
		\hk_{N-\ell-1} \Ldot Z_{N-\ell} \Rdot \hk_{N-\ell,r,y}\ , & \text{ when } I = \{ N-\ell\}\ ,
	\end{cases}
\end{align}
we perform the analogous expansion of \eqref{e.decoupling.} to the numerator in \eqref{e.decoupling'.1} and divide the result by the denominator in \eqref{e.decoupling'.1} to conclude that 
\begin{align}
	\label{e.decoupling'.2}
	\text{fraction in \eqref{e.decoupling'.1}}
	=
	\ind_{\Wevent}
	\frac{M_{\{N-\ell\}}(y)}{\hk_{N-\ell-1}\Ldot Z_{N-\ell} \Rdot 1}
	+
	\sum_{\calI\in\scrI'}
	\prod_{I\in\calI}
	\frac{
		\ind_{\Wevent}\, M_I(y)
	}{
		\prod_{i\in I} \hk_{i-1}\Ldot Z_{i} \Rdot 1
	}\ .
\end{align}
Note that \eqref{e.decoupling'.2} differs from \eqref{e.decoupling} because the last test function in the numerator in \eqref{e.decoupling'.1} is $\hk_{N-\ell,r,y}$ and not $1$.
This fact is also reflected in how $\scrI'$ differs from $\scrI$ (defined in \eqref{e.scrI}).
 
Next, we bound \eqref{e.decoupling'.1} using the above decoupling expansion.
In \eqref{e.decoupling'.2}, bound the first denominator from below by $1/2$ using of $\ind_{\Wevent}$, square the result using $(x+y)^2\leq 2x^2+2y^2$, apply $4\pi t_{N-\ell} r^2\int_{\R^2} \d y \, \E$ to the result, use \eqref{e.Z.hkk} for $i=N-\ell$ to bound the first term on the right hand side of the result, and write $c=c(c_0)$ to simplify notation.
Doing so shows that 
\begin{align}
	\label{e.decoupling'.3}
	\text{lhs of \eqref{e.decoupling'}}
	\leq
	c\,r^2 \log\frac{r^2+1}{r^2}
	+
	c t_{N-\ell} r^2 \int_{\R^2} \d y \, 
	\E\Big(
		\sum_{\calI\in\scrI'}
		\prod_{I\in\calI}
		\frac{
			\ind_{\Wevent}\, M_I(y)
		}{
			\prod_{i\in I} \hk_{i-1}\Ldot Z_{i} \Rdot 1
		}
	\Big)^2\ .
\end{align}

It now suffices to bound the last term in \eqref{e.decoupling'.3}, and we do so by employing the argument in the proof of Proposition~\ref{p.decoupling.bd}.
Let $F'_{I}(y)$ denote the last fraction in \eqref{e.decoupling'.3}, for $\calI,\calJ\in\scrI'$ let $G_{\calI,\calJ}$ be the graph defined before \eqref{e.calK_calI,calJ}, and let $\calK'_{\calI,\calJ}$ be the set of connected components $K$ of $G_{\calI,\calJ}$ such that $|K|\geq 2$ or $K\ni N-\ell$.
Similar to \eqref{e.decoupling.bd.expansion}, here we have
\begin{align}
	\label{e.decoupling'.expansion}
	\text{last term in \eqref{e.decoupling'.3}}
	&=
	\sum_{\calI,\calJ\in\scrI'} \prod_{K\in\calK'_{\calI,\calJ}} A'_{K,\calI,\calJ}\ ,
\\
	\label{e.pdecoupling'.A}
	A'_{K,\calI,\calJ} &:= c t_{N-\ell} r^2 \int_{\R^2} \d y \, \E \Big[ 
		\prod_{I\in \calI, I\subset K} F'_I(y) \cdot \prod_{J\in\calJ, J\subset K} F'_J(y) 
	\Big]\ .
\end{align}
By definition, there is exactly one interval in $\calK'_{\calI,\calJ}$ that contains $N-\ell$, and the same holds for sets in $\scrI'$.
We write that interval as $K_*$.
Note that there is exactly one $I$ and one $J$ in the products in \eqref{e.pdecoupling'.A} that contain $K_*$.
Given Corollary~\ref{c.DZ.bd} and \eqref{e.Z.hkk} for $i=N-\ell$, the same argument that leads to \eqref{e.decoupling.bd.Abd} gives
\begin{align}
	\label{e.decoupling'.Abd}
	A'_{K,\calI,\calJ} 
	\leq 
	\frac{1}{(N-j_{K})^{\frac{3}{2}\ind_{K\neq K_*}}}\,\frac{c^{|K|}}{ (\log\bb^{-1})^{(|K|-1)/2} }
	\Big( r^2 \log \frac{1+r^2}{r^2} \Big)^{\ind_{K= K_*}}\ .
\end{align}
Insert \eqref{e.decoupling'.Abd} into \eqref{e.decoupling'.expansion} and apply the argument after \eqref{e.decoupling.bd.expansion..} to bound the result.
Doing so shows that the last term in \eqref{e.decoupling'.3} is bounded by
\begin{align}
	\label{e.decoupling'.expansion.}
	r^2 \log \frac{1+r^2}{r^2}\Big(
		\sum_{m=2}^{N-\ell}
		\frac{c^m}{(\log\bb^{-1})^{(m-1)/2}}
		\
		\sum_{\vecu: \sum u_i=m}
		\
		\sum_{\calK}
		\
		\prod_{h=1}^{|\vecu|-1}
		\frac{1}{(N-j_{K_{h}})^{3/2}}
	\Big)\ ,
\end{align}
where the third sum runs over all $\calK\in\scrI'$ with $\type\calK=\vecu$, and $\type\calK$ was defined before \eqref{e.decoupling.bd.expansion..}.
The product in \eqref{e.decoupling'.expansion.} does not include $h=|\vecu|$ because the bound in \eqref{e.decoupling'.Abd} lacks the factor $1/(N-j_{K})^{3/2}$ when $K=K_*$ (and $K_*$ is the last interval when we order the intervals in $\calK\in\scrI'$).
To compensate for the lacked factor, we let $n:=m-|K_*|$, $\vecv:=(u_1,\ldots,u_{|\vecu|-1})$ and $\calL:=\calK\setminus\{K_*\}$ and write the expression within the parentheses in \eqref{e.decoupling'.expansion.} as
\begin{align}
	\label{e.decoupling'.expansion..}
	\sum_{n=0}^{N-\ell}
	\frac{c^{n}}{(\log\bb^{-1})^{n/2}}
	\
	\sum_{\vecv: \sum v_i=n}
	\
	\sum_{\calL}
	\
	\prod_{h=1}^{|\vecv|}
	\frac{1}{(N-j_{K_{h}})^{3/2}}
	\cdot
	\sum_{|K_*|=1}^{N-\ell}
	\frac{c^{|K_*|}}{(\log\bb^{-1})^{(|K_*|-1)/2}}
	\ ,
\end{align}
where the third sum runs over all $\calL$ such that $\type\calL=\vecv$ and that $\calL\cup\{K_*\}\in\scrI'$.
Bound the last sum by $c$ under the assumption that $\bb$ is small enough, forgo the condition $\calL\cup\{K_*\}\in\scrI'$ in the sum over $\calL$, and apply the argument after \eqref{e.decoupling.bd.expansion..} to conclude that $\eqref{e.decoupling'.expansion..}\leq c$.
This completes the proof.
\end{proof}

Let us now prove the result that will be used in Section~\ref{s.last.piece}.
Let $\logf(x_1,x_2):=\log(|x_1-x_2|^{-1}\vee 1)$ for $x_1,x_2\in\R^2$.
\begin{cor}\label{c.quenched}
There exists $c=c(c_0)$ such that, for $\bb\leq 1/c$, $\ell\in[1,N]$, and $L\geq 1$,
\begin{align}
	\label{e.quenched}
	\P\Big[
		\ind_{\Wevent}
		\frac{%
			\big( \hk(\e^2) \Ldot Z_{0,t_{N-\ell}} \big)^{\otimes 2} \Rdot (1+\logf) 
		}{%
			\big( \hk(\e^2) \Ldot Z_{0,t_{N-\ell}} \Rdot 1 \big)^2
		}
		> L + \log t^{-1}_{N-\ell}
	\Big]
	\leq
	\frac{c}{(\ell\log\bb^{-1})^{1/2}} +\frac{c}{L}\ .
\end{align}
\end{cor}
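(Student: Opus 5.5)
The plan is to dominate $1+\logf$ by a superposition of the Gaussian kernels $\hkk(s)$. Short scales $s\le t_{N-\ell}$ will be controlled in mean by Lemma~\ref{l.decoupling'}, and long scales $s\ge t_{N-\ell}$ will be bounded deterministically by $\hkk\le 1$.

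\emph{Kernel representation.} For $d=|x_1-x_2|$ we have
\begin{align*}
	\int_0^1 \frac{\d s}{s}\,\hkk(s,x_1,x_2)=E_1(d^2/4),
\end{align*}
where $E_1$ is the exponential integral. It satisfies $E_1(d^2/4)=2\log d^{-1}+\log 4-\gamma+o(1)$ as $d\to0$, and $E_1(d^2/4)\ge 0$ for all $d$. Hence there is an absolute constant $c$ with
\begin{align*}
	1+\logf(x_1,x_2)\le c+\tfrac12\int_0^1\frac{\d s}{s}\,\hkk(s,x_1,x_2)\qquad\text{for all } x_1,x_2\in\R^2 .
\end{align*}
I split the $s$-integral at $t_{N-\ell}$. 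On $[t_{N-\ell},1]$ I use $\hkk\le 1$, which contributes at most $\tfrac12\log t_{N-\ell}^{-1}$ times $(\hk(\e^2)\Ldot Z_{0,t_{N-\ell}}\Rdot 1)^2$. On $[0,t_{N-\ell}]$ I substitute $s=t_{N-\ell}r^2$, so that $\d s/s=2\,\d r/r$ with $r\in(0,1]$. Writing $\mu:=\hk(\e^2)\Ldot Z_{0,t_{N-\ell}}$, this gives
\begin{align*}
	\frac{\mu^{\otimes2}\Rdot(1+\logf)}{(\mu\Rdot1)^2}\le c+\tfrac12\log t_{N-\ell}^{-1}+\int_0^1\frac{\d r}{r}\,\frac{\mu^{\otimes 2}\Rdot\hkk(t_{N-\ell}r^2)}{(\mu\Rdot 1)^2}.
\end{align*}

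\emph{Changing the denominator.} Lemma~\ref{l.decoupling'} controls the product denominator $\prod_i \hk_{i-1}\Ldot Z_i\Rdot1$, not $(\mu\Rdot1)^2$. By Corollary~\ref{c.clt}, outside an event of probability at most $c(\ell\log\bb^{-1})^{-1/2}$ we have, on $\Wevent$, that $\mu\Rdot 1\ge\frac12\prod_{i=1}^{N-\ell}\hk_{i-1}\Ldot Z_i\Rdot 1$. On the complement of that exceptional event, the last integral is therefore at most $4X$, where
\begin{align*}
	X:=\ind_{\Wevent}\int_0^1\frac{\d r}{r}\,\frac{\mu^{\otimes2}\Rdot\hkk(t_{N-\ell}r^2)}{\big(\prod_{i=1}^{N-\ell}\hk_{i-1}\Ldot Z_i\Rdot1\big)^2}.
\end{align*}

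\emph{Markov bound.} By Fubini and Lemma~\ref{l.decoupling'},
\begin{align*}
	\E X\le c\int_0^1 \d r\, r\log\frac{1+r^2}{r^2}<\infty ,
\end{align*}
since the $r^2$ in the lemma's bound beats the $1/r$ and the logarithm is integrable. The event in \eqref{e.quenched} forces either the exceptional event or $c+\tfrac12\log t_{N-\ell}^{-1}+4X>L+\log t_{N-\ell}^{-1}$. The latter implies $4X>L-c$. For $L\ge 2c$, Markov's inequality gives probability at most $c/L$; for $L<2c$ the claim is trivial after enlarging $c$, because then $c/L\ge1$.

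The only real obstacle is the first step: a pointwise kernel domination of $\logf$ whose coefficient in front of $\log t_{N-\ell}^{-1}$ is at most $1$. The $\tfrac12$ from the $E_1$ asymptotics gives exactly this. Everything else is bookkeeping with the two earlier results.
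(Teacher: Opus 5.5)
Your proof is correct and follows essentially the same route as the paper. Both dominate the logarithm by a superposition over scales of $\hkk(t_{N-\ell}r^2)$, control it in mean via Lemma~\ref{l.decoupling'} and Markov's inequality, swap the denominator using Corollary~\ref{c.clt}, and pay a deterministic $\tfrac12\log t_{N-\ell}^{-1}$ for the scales above $t_{N-\ell}$. The only difference is cosmetic: the paper peels off that term first via $1+\logf\le 1+\tfrac12\log t_{N-\ell}^{-1}+\logf(t_{N-\ell}^{-1/2}\cdott)$ and then uses the crude bound $\ind_{|x_1-x_2|\le r t_{N-\ell}^{1/2}}\le c\,\hkk(t_{N-\ell}r^2,x_1,x_2)$, so it never needs the sharp factor $\tfrac12$ that your exponential-integral asymptotics supply.
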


\begin{proof}
To begin, write
\begin{align}
	\logf(t_{N-\ell}^{-1/2}x) = \int_{0}^{1} \d r \, \frac{1}{r} \ind_{|x_1-x_2|\leq r\,t_{N-\ell}^{1/2}} \leq c \int_{0}^{1} \d r\, \frac{1}{r} \hkk(t_{N-\ell}r^2,x)\ ,
	\quad
	x=(x_1,x_2)\ ,
\end{align}
apply $(\hk(\e^2) \Ldot Z_{0,t_{N-\ell}})^{\otimes 2}$ to both sides of the inequality, multiply the result by $\ind_{\Wevent}$, divide the result by the denominator in \eqref{e.decoupling'}, take $\E$ on both sides of the result, move $\E$ into the integral over $r$, and apply Lemma~\ref{l.decoupling'}.
Doing so gives
\begin{align}
	\E\, \ind_{\Wevent}
	\frac{%
			\big( \hk(\e^2) \Ldot Z_{0,t_{N-\ell}} \big)^{\otimes 2} \Rdot \logf(t_{N-\ell}^{-1/2}\cdott)
	}{%
		(\prod_{i=1}^{N-\ell} \hk_{i-1} \Ldot Z_{i} \Rdot 1)^2
	}
	\leq
	c \int_0^1 \d r \, \frac{1}{r} \ r^2 \log \frac{1+r^2}{r^2}
	<
	\infty\ .
\end{align}
Combining this with Markov's inequality and Proposition~\ref{p.decoupling.bd} gives
\begin{align}
	\label{e.quenched.}
	\P\Big[
		\ind_{\Wevent}
		\frac{%
			\big( \hk(\e^2) \Ldot Z_{0,t_{N-\ell}} \big)^{\otimes 2} \Rdot \logf(t_{N-\ell}^{-1/2}\cdott)
		}{%
			\big( \hk(\e^2) \Ldot Z_{0,t_{N-\ell}} \Rdot 1 \big)^2
		}
		> L 
	\Big]
	\leq
	\frac{c}{(\ell\log\bb^{-1})^{1/2}} +\frac{c}{L}\ .
\end{align}
Next, observe that $1 + \log t^{-1/2} + \logf(t^{-1/2}\,\cdott) \geq 1+\logf$ and that $(\hk(\e^2) \Ldot Z_{0,t_{N-\ell}})^{\otimes 2} \Rdot f_0 = f_0 \,( \hk(\e^2) \Ldot Z_{0,t_{N-\ell}} \Rdot 1 )^2$ for any constant function $f_0$ of $x$.
Combining these observations for $t=t_{N-\ell}$ and $f_0=1 + \log t_{N-\ell}^{-1/2}$, we see that \eqref{e.quenched.} implies \eqref{e.quenched}.
\end{proof}

\section{Comparison for the last interval}
\label{s.last.piece}

Given Corollary~\ref{c.clt}, we complete the proof of Theorem~\ref{t.main} by proving the following comparison bounds.
The comparison bounds we will prove are the following:
For any $r_{\ell,\bb}$ such that $r_{\ell,\bb}/(\ell\log\bb^{-1})^{\alpha}\to\infty$ as $(\ell,\bb)\to(\infty, 0)$ for some $\alpha>3/2$,
\begin{align}
	\label{e.upbd}
	\P\big[ 
		\hk(\e^2) \Ldot Z_{0,1} \Rdot 1 
		>
		e^{r} \, \hk(\e^2) \Ldot Z_{0,t_{N-\ell}} \Rdot 1 
	\big]
	&\leq e^{-r}\ ,
	\quad
	r \geq 0\ ,
\\
	\label{e.lwbd}
	\lim_{(\ell,\bb)\to(\infty,0)}
	\sup_{\e\leq 1/2, |\theta|\leq c_0}
	\P\big[ 
		\hk(\e^2) \Ldot Z_{0,1} \Rdot 1 
		<
		e^{-r_{\ell,\bb}} \, \hk(\e^2) \Ldot Z_{0,t_{N-\ell}} \Rdot 1 
	\big]
	&= 0\ .
\end{align}
Indeed, these bounds together with Lemma~\ref{l.clt} and Corollary~\ref{c.clt} imply Theorem~\ref{t.main}.

To prove these bounds, we decompose $Z_{0,1}$ into $Z_{0,t_{N-\ell}}\Cdot Z_{t_{N-\ell},1}$.
Recall from \eqref{e.Cdot}--\eqref{e.Cdot.} that $\Cdot$ is defined through a limit.
We write
\begin{align}
	\label{e.lastpiece.decomp}
	\hk(\e^2) \Ldot Z_{0,1} \Rdot 1
	=
	\lim_{\delta\to 0} \Phi_{\delta} \Ldot Z_{t_{N-\ell},1} \Rdot 1\ ,
	&&
	\Phi_{\delta}
	=
	\Phi^{\theta}_{\ell,\delta}(x)
	:=
	\hk(\e^2) \Ldot Z^{\theta}_{0,t_{N-\ell}} \Rdot \hk(\delta^2,\cdott-x)\ .
\end{align}
For purely technical reasons, instead of $\hk(\e^2)\Ldot Z_{0,1}\Rdot 1$, below we will work with $\Phi_{\delta} \Ldot Z_{t_{N-\ell},1} \Rdot 1$ for $\delta>0$.
The bounds we obtain below are uniform in $\delta\leq 1$, which allows us to send $\delta\to 0$ in the end and recover statements about $\hk(\e^2) \Ldot Z_{0,1} \Rdot 1$.
Write $\EE:=\E[\cdott|\filt_{[0,t_{N-\ell}]}]$ and similarly for $\PP$.
Given the independence in Axiom~\ref{d.shf.inde}, we equivalently view $\EE$ and $\PP$ as the law of $Z_{s,t}$ for $s,t\in[t_{N-\ell},1]$.

To prove \eqref{e.upbd}, use Markov's inequality to write
\begin{align}
	\label{e.upbd.markov}
	\PP \big[ \Phi_{\delta} \Ldot Z_{t_{N-\ell},1} \Rdot 1 \leq  e^{r}\,\hk(\e^2) \Ldot Z_{0,t_{N-\ell}} \Rdot 1 \big]
	\geq
	1 - \frac{\EE\,\Phi_{\delta} \Ldot Z_{t_{N-\ell},1} \Rdot 1}{e^{r}\,\hk(\e^2) \Ldot Z_{0,t_{N-\ell}}\Rdot 1} \ .
\end{align}
In the last expectation, note that $\EE$ acts only on $Z_{t_{N-\ell},1}$, use $\EE\,Z_{t_{N-\ell},1}=\heatsg_{t_{N-\ell},1}$, write $\ip{\Phi_{\delta},\heatsg_{t_{N-\ell},1} 1}=\ip{\Phi_{\delta},1}$, and observe that the inner product evaluates to $\hk(\e^2)\Ldot Z_{0,t_{N-\ell}} \Rdot 1$.
Doing so shows that the fraction \eqref{e.upbd.markov} is equal to $e^{-r}$.
Now sending $\delta\to 0$ with the aid of \eqref{e.lastpiece.decomp} gives \eqref{e.upbd}.

To prove \eqref{e.lwbd}, we need to involve the conditional GMC structure from \cite{clark2025conditional}.
To begin, recall that the polymer measures associated with $Z$ were constructed in \cite{clark2024continuum}.
Let $\Pathsp:=\Csp([t_{N-\ell},1],\R^2)$ and write $\PM:=\PM^{\theta}_{[t_{N-\ell},1]}$ for the polymer measure over $[t_{N-\ell},1]$, which is a random measure on $\Pathsp$.
View $\Phi_{\delta}(x)$ as a function of $\ppath\in\Pathsp$ by substituting in $x=\ppath(t_{N-\ell})$ so that
\begin{align}
	\label{e.Z.M}
	\Phi_{\delta} \Ldot Z_{t_{N-\ell},1} \Rdot 1
	=
	\PM[\Phi_{\delta}]
	:=
	\int_{\Pathsp} \PM(\d\ppath) \, \Phi_{\delta}(\ppath(t_{N-\ell}))\ .
\end{align}
Next, decrease the coupling constant by $\aa>0$ and write $\PMm:=\PM^{\theta-\aa}_{[t_{N-\ell},1]}$ for the resulting polymer measure.
Let $\inters:\Pathsp^2\to[0,\infty]$ be the intersection local time constructed in \cite{clark2025planar}, see also \cite[Section~2.2]{clark2025conditional} for a quick review, and let $\noise=(\noise_{1},\noise_{2},\ldots)$ consist of iid standard Gaussians.
Given these data, we now consider the GMC $\gmc(\PMm,\sqrt{\aa}\noise)$ with base measure $\PMm$, covariance kernel $\inters$, and Gaussian input $\sqrt{\aa}\noise$, where we take $\PMm$ and $\sqrt{\aa}\noise$ to be independent.
We call $\gmc(\PMm,\sqrt{\aa}\noise)$ a conditional GMC because its base measure is random and the GMC is understood conditioned on the base measure.
For the ease of presentation, we defer the full definition of $\gmc(\PMm,\sqrt{\aa}\noise)$ to Appendix~\ref{s.a.gmc} and just mention that by \cite[Theorem~1.1]{clark2025conditional}
\begin{align}
	\label{e.cgmc}
	\PM = \gmc( \PMm, \sqrt{\aa}\noise) \ ,
	\qquad
	\PM = \PM^{\theta}_{[t_{N-\ell},1]}\ ,
	\
	\PMm = \PM^{\theta-\aa}_{[t_{N-\ell},1]}\ ,
\end{align}
where the first equality holds in law.
For the convenience of presentation, we suitably extend the probability space to include $\PMm$ and $\noise$ so that they are independent of $Z_{s,t}$ for $s,t\in[0,t_{N-\ell}]$ and that the first equality in \eqref{e.cgmc} holds samplewise.
Accordingly, write $\EE$ for the joint law of $\PMm$ and $\noise$.

In Appendix~\ref{s.a.gmc}, we apply the argument in \cite{morenoflores2014positivity} to the conditional GMC in \eqref{e.cgmc} to obtain the following bound.
Let 
\begin{align}
	R  := \frac{\PMm{}^{\otimes 2} [\Phi_{\delta}^{\otimes 2} e^{\aa\inters}] }{ \PMm[\Phi_{\delta}]^2}\ ,
	\qquad
	R' := \frac{ \PMm{}^{\otimes 2} [\Phi_{\delta}^{\otimes 2} \inters e^{\aa\inters} ] }{ \PMm[\Phi_{\delta}]^2}\ ,
\end{align}
where these variables depend on $\theta,\ell,\aa,\delta$ and $\bb,\e$ (through $t_{N-\ell},N$).
Then, $\P$-a.s. the following bound holds for all $r\geq 0$
\begin{align}
	\label{e.morenoflores}
	&\PP\Big[ 
		\PM[\Phi_{\delta}]
		< e^{-r} \, \PMm[\Phi_{\delta}]
	\Big]
	\leq
	2\,\EE\exp\Big( - \frac{1}{2^7\aa R R'}\big( r - \log 2 - \sqrt{ 2^7 \aa R R' \log 2^4R } \big)_+^2 \Big)\ .
\end{align}
To use \eqref{e.morenoflores}, we need to establish a few bounds.
Recall $\Wevent$ from \eqref{e.Wevent} and write $\P[ X<Y, \Wevent ]=\P[ \{X<Y\}\cap\Wevent ]$, etc.
\begin{lem}\label{l.quench.bds}
There exists $c=c(c_0)$ such that for all $\ell\in\Z_{\geq 1}$, $\bb\leq 1/c$, $\aa\geq 0$, $\delta\leq 1$, and $L\geq 1$, 
\begin{align}
	\label{e.quench.bd1}
	&\P\Big[ 
		\PMm[\Phi_{\delta}]
		< 
		\tfrac{1}{2} \ \hk(\e^2) \Ldot Z_{0,t_{N-\ell}} \Rdot 1\ , \Wevent
	\Big]
	\leq
	\frac{c\,\ell\log\bb^{-1}}{\aa+1} + \frac{c}{(\ell\log\bb^{-1})^{1/2}}\ ,
\\
	\label{e.quench.bd2}
	&\P\big[ R > L\ , \Wevent \big] + \P\big[ R' > L\ , \Wevent \big]
	\leq
	\frac{c\,\ell\log\bb^{-1}}{\aa+1} + \frac{c\,\ell\log\bb^{-1}}{L} + \frac{c}{(\ell\log\bb^{-1})^{1/2}}\ .
\end{align}
\end{lem}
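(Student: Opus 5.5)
The plan is to work conditionally on $\filt_{[0,t_{N-\ell}]}$, so that $\Phi_\delta$ is a fixed nonnegative function. Under $\EE$, $\PMm[\Phi_\delta]=\Phi_\delta\Ldot Z^{\theta-\aa}_{t_{N-\ell},1}\Rdot 1$ has conditional mean $\ip{\Phi_\delta,1}=\hk(\e^2)\Ldot Z_{0,t_{N-\ell}}\Rdot 1$. Both bounds then reduce to conditional second-moment computations, expressed through $\sg^{2}$ and $\sgg^{2}$. The resulting quenched ratio $(\hk(\e^2)\Ldot Z_{0,t_{N-\ell}})^{\otimes 2}\Rdot(1+\logf)/(\hk(\e^2)\Ldot Z_{0,t_{N-\ell}}\Rdot 1)^2$ is controlled by Corollary~\ref{c.quenched}. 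Throughout, I would take $L_0:=\ell\log\bb^{-1}$ there and use $\log t_{N-\ell}^{-1}\le c\,\ell\log\bb^{-1}$, which follows from \eqref{e.logti} and the definition of $N$.

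\textbf{Proof of \eqref{e.quench.bd1}.} By Chebyshev's inequality,
\[
\PP\big[\PMm[\Phi_\delta]<\tfrac12\ip{\Phi_\delta,1}\big]\le 4\,\frac{\ip{\Phi_\delta^{\otimes2},\sgg^{2,\theta-\aa}(1-t_{N-\ell})1}}{\ip{\Phi_\delta,1}^2}.
\]
The key kernel estimate I need is
\[
\big(\sgg^{2,\theta-\aa}(t)1\big)(x_1,x_2)\le \frac{c}{\aa+1}\big(1+\logf(x_1,x_2)\big),\qquad t\le1 .
\]
To get it, integrate out $x'$ and $y'$ in \eqref{e.sgg2}, which leaves $\int_0^t\d s\,\frac{1}{s}\,e^{-|x_1-x_2|^2/4s}\int_0^{t-s}\jfn^{\theta-\aa}$ up to constants. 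By \eqref{e.jfn}, \eqref{e.jfn.estimate} and \eqref{e.jfn.intbd} with $\theta$ replaced by $\theta-\aa$, the inner integral is at most $c/(\aa+1)$. The outer integral is at most $c(1+\logf)$.

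Next I would remove the mollification: $\Phi_\delta^{\otimes2}$ tested against $1+\logf$ equals $(\hk(\e^2)\Ldot Z_{0,t_{N-\ell}})^{\otimes2}$ tested against $\hk(\delta^2)^{\otimes2}*(1+\logf)$, and this convolution is $\le c(1+\logf)$ uniformly in $\delta\le1$. Then apply Corollary~\ref{c.quenched} with $L=L_0$ on $\Wevent$. Outside an event of probability $c/(\ell\log\bb^{-1})^{1/2}+c/L_0$, the ratio is $\le c\,\ell\log\bb^{-1}$. Hence the conditional probability is $\le c\,\ell\log\bb^{-1}/(\aa+1)$, and taking $\E$ gives \eqref{e.quench.bd1}.

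\textbf{Proof of \eqref{e.quench.bd2}.} From the GMC identity \eqref{e.cgmc}, averaging over $\noise$ gives $\EE\,\PM^{\otimes2}[F]=\EE\,\PMm{}^{\otimes2}[F e^{\aa\inters}]$ for $F=\Phi_\delta^{\otimes2}$. Therefore
\[
\EE\,\PMm{}^{\otimes2}[\Phi_\delta^{\otimes2}e^{\aa\inters}]=\ip{\Phi_\delta^{\otimes2},\sg^{2,\theta}(1-t_{N-\ell})1}\le c\,\Phi_\delta^{\otimes2}\Rdot(1+\logf),
\]
where the last bound is the same kernel estimate with $\aa=0$, since $|\theta|\le c_0$. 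For $R'$ I would use $\inters e^{\aa\inters}=\partial_\aa e^{\aa\inters}$. This writes the numerator's conditional mean as $\partial_{\theta'}$ of $\ip{\Phi_\delta^{\otimes2},\sg^{2,\theta'}1}$ at $\theta'=\theta$, i.e.\ with $\jfn^{\theta}$ replaced by $\partial_\theta\jfn^\theta$. From \eqref{e.jfn} and the contour formula, $|\partial_\theta\jfn^\theta(t)|\le c/(t((\log t+\theta)_-^3+1))$, which is also integrable. This yields the same bound $c\,\Phi_\delta^{\otimes 2}\Rdot(1+\logf)$.

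To finish, on the event $\{\PMm[\Phi_\delta]\ge\frac12\ip{\Phi_\delta,1}\}$ we have $R\le 4\,\PMm{}^{\otimes2}[\Phi_\delta^{\otimes2}e^{\aa\inters}]/\ip{\Phi_\delta,1}^2$, and likewise for $R'$. Apply conditional Markov's inequality with threshold $L$. This gives conditional probability $\le c\,\ell\log\bb^{-1}/L$ on the good event of Corollary~\ref{c.quenched}. Adding the probability of the complement event from \eqref{e.quench.bd1} and the exceptional probability from Corollary~\ref{c.quenched} yields \eqref{e.quench.bd2}.

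\textbf{Main obstacle.} I expect the hardest step to be the uniform kernel bounds $\sgg^{2,\theta-\aa}1\le c(1+\logf)/(\aa+1)$ and the analogous bound for the $\theta$-derivative of $\sg^{2,\theta}1$. These must be uniform in $t\le 1$ and $\aa\ge0$, and they require derivative estimates on $\jfn^\theta$ that go beyond \eqref{e.jfn.estimate}. I would try to obtain them from the contour representation of $\jfn^\theta$.
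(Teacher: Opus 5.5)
Your proposal is correct and follows the paper's proof essentially step for step: conditional Chebyshev using the mean and variance identities, the kernel bound $\sgg^{2,\theta-\aa}1\le \frac{c}{\aa+1}(1+\logf)$ obtained by integrating out \eqref{e.sgg2} and applying \eqref{e.jfn.intbd}, Corollary~\ref{c.quenched} with $L=\ell\log\bb^{-1}$, and then Markov's inequality for the numerators of $R,R'$ combined with \eqref{e.quench.bd1}. The deviations are minor: you absorb the mollifier via $\hk(\delta^2)^{\otimes 2}(1+\logf)\le c(1+\logf)$, whereas the paper lengthens the semigroup time to $1+\delta^2-t_{N-\ell}$; and for $R'$ your $\partial_\theta$ argument works but is unnecessary, since $\inters e^{\aa\inters}\le e^{(\aa+1)\inters}$ turns the numerator's conditional mean into a $\sg^{2,\theta+1}$ expression covered by the same kernel bound, so the ``main obstacle'' you flag (which is in any case a direct computation) never arises.
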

\begin{proof}
Recalling that $\sg^{n,\theta'}$ and $\sgg^{n,\theta'}$ denote the kernels of the $n$th moments of $Z^{\theta'}$ and $W^{\theta'}$ respectively, we begin by establishing the following identity and bounds
\begin{align}
	\label{e.quench.bds.a}
	\EE\,\PMm[\Phi_{\delta}] &= \hk(\e^2) \Ldot Z_{0,t_{N-\ell}} \Rdot 1\ ,
\\
	\label{e.quench.bds.b}
	\Varr\,\PMm[\Phi_{\delta}] &\leq \big( \hk(\e^2) \Ldot Z_{0,t_{N-\ell}} \big)^{\otimes 2} \Rdot \, \sgg^{2,\theta-\aa}(1+\delta^2-t_{N-\ell})1\ ,
\\
	\label{e.quench.bds.c}
	\EE\,\PMm{}^{\otimes 2}[\Phi_{\delta}^{\otimes 2} e^{\aa\inters}]
	&\leq \big( \hk(\e^2) \Ldot Z_{0,t_{N-\ell}} \big)^{\otimes 2} \Rdot \, \sg^{2,\theta}(1+\delta^2-t_{N-\ell})1\ ,
\\
	\label{e.quench.bds.d}
	\EE\,\PMm{}^{\otimes 2}[\Phi_{\delta}^{\otimes 2} \inters e^{\aa\inters}]
	&\leq \big( \hk(\e^2) \Ldot Z_{0,t_{N-\ell}} \big)^{\otimes 2} \Rdot \, \sg^{2,\theta+1}(1+\delta^2-t_{N-\ell})1\ .
\end{align}
Write $Z^{\theta-\aa}_{t_{N-\ell},1}$ and $W^{\theta-\aa}_{t_{N-\ell},1}$ as $Z'_{t_{N-\ell},1}$ and $W'_{t_{N-\ell},1}$ respectively to simplify notation.
To prove \eqref{e.quench.bds.a}, write $\PMm[\Phi_{\delta}]=\Phi_{\delta}\Ldot Z'_{t_{N-\ell},1}\Rdot 1$ and take $\EE$ on both sides.
Doing so shows that $\EE\,\PMm[\Phi_{\delta}]=\ip{\Phi_{\delta},\heatsg_{t_{N-\ell},1} 1} = \ip{\Phi_{\delta},1}$.
From the definition of $\Phi_{\delta}$ in \eqref{e.lastpiece.decomp}, we see that $\ip{\Phi_{\delta},1}$ is equal to the right hand side of \eqref{e.quench.bds.a}.
To prove \eqref{e.quench.bds.b}, write $\PMm[\Phi_{\delta}]=\Phi_{\delta}\Ldot Z'_{t_{N-\ell},1}\Rdot 1$ and note that $\Varr\,\Phi_{\delta}\Ldot Z'_{t_{N-\ell},1}\Rdot 1=\EE\,(\Phi_{\delta}\Ldot W'_{t_{N-\ell},1}\Rdot 1)^2$, which evaluates to $\ip{\Phi_{\delta}^{\otimes 2}, \sgg^{2,\theta-\aa}(1-t_{N-\ell})1}$.
Referring to the definition of $\Phi_{\delta}$ in \eqref{e.lastpiece.decomp} and noting that
\begin{align}
	\big( \hk(\delta^2)^{\otimes 2}\sgg^{2,\theta-\aa}(1-t_{N-\ell}) \big)(x,x')
	\leq
	\sgg^{2,\theta-\aa}(1-t_{N-\ell}+\delta^2,x,x')
\end{align}
(see \eqref{e.sgg2}), we see that the last inner product is bounded by the right hand side of \eqref{e.quench.bds.b}.
To prove \eqref{e.quench.bds.c}, first note that $\PMm{}^{\otimes 2}[\Phi_{\delta}^{\otimes 2} e^{\aa\inters}]=\PM[\Phi_{\delta}]^{2}$.
This follows from \eqref{e.cgmc} and the second moment formula of GMC (see \eqref{e.gmc.2ndmom}).
Next, write $\PM[\Phi_{\delta}]^{2}=(\Phi_{\delta}\Ldot Z_{t_{N-\ell},1}\Rdot 1)^2=\Phi_{\delta}^{\otimes 2}\Ldot Z_{t_{N-\ell},1}^{\otimes 2}\Rdot 1$, take $\EE$ on both sides, and simplify the result similarly to the above.
The result gives \eqref{e.quench.bds.c}.
To prove \eqref{e.quench.bds.d}, note that $\inters e^{\aa\inters}\leq e^{(\aa+1)\inters}$ and use \eqref{e.quench.bds.c} for $\aa\mapsto\aa+1$.

Let us prove \eqref{e.quench.bd1}.
Recall that $\logf(x_1,x_2):=\log(|x_1-x_2|^{-1}\vee 1)$.
From \eqref{e.jfn.intbd}, \eqref{e.sgg2}, and $\sg^{2,\theta'}=\heatsg+\sgg^{2,\theta'}$, it is straightforward to check that
\begin{align}
	\label{e.sg.sgg2.bd}
	0 < \sgg^{2,\theta'}(t) 1 \leq \frac{c(c_0)}{\theta'_-+1} (1+\logf)\ ,
	\qquad
	0 < \sg^{2,\theta'}(t) 1 \leq c(c_0)\, (1+\logf)\ , 
\end{align}
for all $\theta'\leq c_0$ and $t\geq 1/2$.
Apply Chebyshev's inequality to the random variable $\PMm[\Phi_{\delta}]$ wrt $\EE$ with the aid of \eqref{e.quench.bds.a}--\eqref{e.quench.bds.b} and \eqref{e.sg.sgg2.bd}, take $\E$ of the result, and bound it using Corollary~\ref{c.quenched}.
Doing so gives
\begin{align}
	\label{e.l.quench.bds}
	&\P\Big[ 
		\PMm[\Phi_{\delta}]
		< 
		\tfrac{1}{2} \ \hk(\e^2) \Ldot Z_{0,t_{N-\ell}} \Rdot 1\ , \Wevent
	\Big]
	\leq
	c\,\frac{L+\log t_{N-\ell}^{-1}}{\aa+1} + \frac{c}{(\ell \log\bb^{-1})^{1/2}} +\frac{c}{L}\ ,
\end{align}
for $L\geq 1$.
From here, writing $\log t_{N-\ell}^{-1} = 2\log\bb^{-1}\cdot(\frac{\log\e^{-1}}{\log\bb^{-1}}-N+\ell) \leq c\,\ell \log\bb^{-1}$ and taking $L=\ell\log\bb^{-1}$ give \eqref{e.quench.bd1}.

Let us prove \eqref{e.quench.bd2}.
Apply Markov's inequality to the random variables $\PMm{}^{\otimes 2}[\Phi_{\delta}^{\otimes 2} e^{\aa\inters}]/(\hk(\e^2) \Ldot Z_{0,t_{N-\ell}} \Rdot 1)^2$ and $\PMm{}^{\otimes 2}[\Phi_{\delta}^{\otimes 2} \inters\,e^{\aa\inters}]/(\hk(\e^2) \Ldot Z_{0,t_{N-\ell}} \Rdot 1)^2$ wrt $\EE$ with the aid of \eqref{e.quench.bds.c}--\eqref{e.quench.bds.d} and \eqref{e.sg.sgg2.bd}, take $\E$ of the result, and bound it using Corollary~\ref{c.quenched} with $L=\ell\log\bb^{-1}$, using $\log t_{N-\ell}^{-1} \leq c\, \ell \log\bb^{-1}$.
Doing so gives
\begin{subequations}
\label{e.l.quench.bds.}
\begin{align}
	\P&\Big[ 
		\frac{ \PMm{}^{\otimes 2}[\Phi_{\delta}^{\otimes 2} e^{\aa\inters}] }{ (\hk(\e^2) \Ldot Z_{0,t_{N-\ell}} \Rdot 1)^2 }
		> \til{L}\ , \Wevent
	\Big]
	+
	\P\Big[ 
		\frac{ \PMm{}^{\otimes 2}[\Phi_{\delta}^{\otimes 2}\,\inters e^{\aa\inters}] }{ (\hk(\e^2) \Ldot Z_{0,t_{N-\ell}} \Rdot 1)^2 }
		> \til{L}\ , \Wevent
	\Big]
\\
	&\leq
	\frac{c\,\ell\log\bb^{-1}}{\til{L}} + \frac{c}{(\ell\log\bb^{-1})^{1/2}}\ ,
\end{align}
\end{subequations}
for all $\til{L}\geq 1$.
Combining \eqref{e.l.quench.bds.} and \eqref{e.quench.bd1} and renaming $\til{L}/4\mapsto L$ give \eqref{e.quench.bd2}.
\end{proof}

Let us prove \eqref{e.lwbd}.
Combining \eqref{e.morenoflores} and Lemma~\ref{l.quench.bds} gives
\begin{subequations}
\label{e.lwbd.}
\begin{align}
	&\P\Big[ 
		\PM[\Phi_{\delta}]
		< e^{-r} \, \hk(\e^2)\Ldot Z_{0,t_{N-\ell}} \Rdot 1\ , \Wevent
	\Big]
	\leq
	\frac{c\,\ell\log\bb^{-1}}{\aa+1}+\frac{c\ell\log\bb^{-1}}{L}+\frac{c}{(\ell\log\bb^{-1})^{1/2}}
\\
	&+
	c\,\exp\Big(-\frac{1}{c\aa L^2}(r-c-c\sqrt{\aa L^2\log cL})_+^2\Big)\ .
\end{align}
\end{subequations}
Given any $r_{\ell,\bb}$ with $r_{\ell,\bb}/(\ell\log\bb^{-1})^{\alpha}\to\infty$ for $\alpha>3/2$, letting $\aa_{\ell,\bb}=L_{\ell,\bb}:= (\ell\log\bb^{-1})^{(2\alpha+3)/6}
$ and substituting in $\aa=\aa_{\ell,\bb}$, $L=L_{\ell,\bb}$, and $r=r_{\ell,\bb}$ make the right hand side of \eqref{e.lwbd.} tend to $0$ as $(\ell,\bb)\to(\infty,0)$.
Combining this fact with \eqref{e.Wevent.prob} gives \eqref{e.lwbd}.

\appendix

\section{Proof of Corollary~\ref{c.main}}
\label{s.a.pf.c.main}
 
Note that $\frac{1}{(\log\e^{-1})^{\beta_\eps'}} 
				\leq Z_\e(y) 
				\leq \frac{1}{(\log\e^{-1})^{\beta_\eps}}$ is equivalent to
        \begin{align}\label{e.321}
 \Big(\frac{1+\alpha_\eps}{2}-\beta_\eps'\Big)\log \log \eps^{-1}\leq        H_\eps(y)+\frac{1+\alpha_\eps}{2}\log \log \eps^{-1} \leq  \Big(\frac{1+\alpha_\eps}{2}-\beta_\eps\Big)\log \log \eps^{-1}\ .
        \end{align}
			Take $\beta_\eps:=\tfrac{1+\alpha_\eps}{2}-(\log\log\e^{-1})^{-1/3}$ and $\beta_\eps':=\tfrac{1+\alpha_\eps}{2}+(\log\log\e^{-1})^{-1/3}$, and write \eqref{e.321} as
\begin{align}
	      H_\eps(y)+\frac{1+\alpha_\eps}{2}\log \log \eps^{-1}\in I_\eps:=[-(\log\log \eps^{-1})^{2/3},(\log\log \eps^{-1})^{2/3}]
\end{align}
			for the interval $I_\eps$. Then we have
      \begin{align}
      \E |D\cap D_{\e}^{\compl}| 
  =\int_D \d y\, \P[y\in D_{\e}^{\compl}]=\int_D \d y\, \big( 1-\P[H_\eps(y)+\tfrac{1+\alpha_\eps}{2}\log \log \eps^{-1}\in I_\eps]\big)\ .
      \end{align}
The proof is complete once we use the translation invariance of $H_\e$ to replace $H_\e(y)$ with $H_\e(0)$, and apply Theorem~\ref{t.main}.

\section{Moments}
\label{s.a.Wmom}
Recall from Axiom~\ref{d.shf.mome} and \eqref{e.Wmomformula} that $\sg=\sg^{n,\theta}$ and $\sgg=\sgg^{n,\theta}$ describe the $n$th moments of $Z^{\theta}$ and $W^{\theta}$ respectively.

Let us recall the explicit formulas of $\sg$ and $\sgg$ and their properties.
Write $[n]:=[1,n]\cap\Z$ to simplify notation.
By a pair $\alpha=ij$ we mean an unordered pair of $i<j\in [n]$ and view it as a set $\alpha=\{i,j\}$.
Let $\pair[n]$ denote the set of such pairs, and let $[n]_{\alpha}:=\{\textc\}\cup([n]\setminus\alpha)$, where $\textc$ means the center of mass index and is taken as an element from outside of $[n]$.
Recall that $\hk(t,x)=\exp(-|x|^2/2t)/2\pi t$ is the heat kernel on $\R^2$.
With slight abuse of notation, when $|\alpha\cap\alpha'|=1$ we write $y_{\alpha\setminus\alpha'}=y_{i}$ where $\alpha\setminus\alpha'=\{i\}$ and do similarly for $y_{\alpha'\setminus\alpha}$.
For $\alpha\neq\alpha'\in\pair[n]$, define the integral operators $\heatsg_{\alpha}(t)$, $\heatsg_{\alpha}(t)^*$, $\heatsg_{\alpha\alpha'}(t)$, $\Jop_{\alpha}(t)=\Jop^{\theta}_{\alpha}(t)$ through their kernels as
\begin{subequations}
\label{e.ops}
\begin{align}
	\label{e.incoming}
	\heatsg_{\alpha}(t,y,x)
	&:=
	\prod_{i\in\alpha} \hk(t,\yc-x_i) \cdot \prod_{i\in[n]\setminus\alpha} \hk(t,y_i-x_i)
	=:
	\heatsg_{\alpha}^*(t,x,y)
	\ ,
\\
	\label{e.Jop}
	\Jop^{\theta}_{\alpha}(t,y, y')
	&:=
	4\pi\,\jfn^{\theta}(t) \, \hk(\tfrac{t}{2},\yc-\yc') \, \prod_{i\in [n]\setminus\alpha} \hk(t,y_i-y'_i)\ ,
\\
	\label{e.swapping}
	\begin{split}
	\heatsg_{\alpha'\alpha}(t,y',y)
	&:=
	\prod_{i\in[n]\setminus(\alpha'\cup\alpha)} \hk(t,y'_i-y_i)
	\\
	&\cdot\begin{cases}
		\hk(t,y'_{\alpha\setminus\alpha'}-\yc) \hk(t,\yc'-y_{\alpha'\setminus\alpha}) \hk(t,\yc'-\yc)\ ,
		&
		|\alpha'\cap\alpha| = 1\ ,
	\\
		\prod_{i\in\alpha'} \hk(t,\yc'-y_i) \cdot 
		\prod_{i\in\alpha} \hk(t,y_i'-\yc)\ ,
		&
		\alpha'\cap\alpha = \emptyset\ ,
	\end{cases}
	\end{split}
\end{align}
\end{subequations}
where $x\in\R^{2[n]}$ and $y,y'\in\R^{2[n]_\alpha}$ in \eqref{e.incoming}--\eqref{e.Jop}, and $y'\in\R^{2[n]_{\alpha'}}$ and $y\in\R^{2 [n]_{\alpha}}$ in \eqref{e.swapping}.
Let
\begin{align}
	\label{e.dgm}
	 \dgm[n]
	&:=	
	\big\{ \vecalpha=(\alpha_k)_{k=1}^m\in\pair[n]^m \, \big| \, m\in\N, \alpha_{k}\neq\alpha_{k+1}, k=1,\ldots, m-1 \big\}\ .
\end{align}
This set indexes certain diagrams, and hence the abbreviation $\dgm$; see \cite[Section~2]{gu2021moments}.
Write $|\vecalpha|:=m$ for the length of $\vecalpha\in\dgm$.
For a function $f=f(u,u',\ldots)$  depending on finitely many nonnegative $u$s, write $\int_{\Sigma(t)}\d \vecu\, f = \int_{u+u'+\cdots=t} \d \vecu\, f$ for the convolution-like integral.
For $\vecalpha\in \dgm[n]$, define the operator
\begin{align}
	\label{e.sgsum}
	\sgsum^{n,\theta}_{\vecalpha}(t)
	&:=
	\int_{\Sigma(t)} \d \vecu \
	\heatsg_{\alpha_{1}}(u_{\frac{1}{2}})^* \,
	\prod_{k=1}^{|\vecalpha|-1} \Jop^{\theta}_{\alpha_{k}}(u_{k}) \, \heatsg_{\alpha_{k}\alpha_{k+1}}(u_{k+\frac{1}{2}}) \cdot
	\Jop^{\theta}_{\alpha_{|\vecalpha|}}(u_{|\vecalpha|}) \, \heatsg_{\alpha_{|\vecalpha|}}(u_{|\vecalpha|+\frac{1}{2}})\ .
\end{align}
Hereafter, products of operators are understood in the indexed order going from left to right, meaning that $\prod_{\ell=1}^{k}\Top_\ell := \Top_1 \Top_2 \cdots \Top_k$.
By \cite[Theorem~1.1]{gu2021moments}, 
\begin{align}
\label{e.sg}
	\sg(t) = \sg^{n,\theta}(t)
	=
	\heatsg(t)
	+
	\sum_{\vecalpha\in \dgm[n]} 
	\sgsum^{n,\theta}_{\vecalpha}(t)\ .
\end{align}
As for $\sgg$, by \cite[Equation~(2.20)]{tsai2024stochastic},
\begin{align}
	\label{e.sgg}
	\sgg(t) &= \sgg^{n,\theta}(t) = \sum_{\vecalpha\in \dgm_*[n]} \sgsum^{n,\theta}_{\vecalpha}(t)\ ,
\\
	\label{e.dgm*}
	\dgm_*[n]
	&:=	
	\big\{ \vecalpha\in\dgm[n] \, \big| \, \alpha_1\cup\cdots\cup\alpha_{|\vecalpha|} = \{1,\ldots,n\} \big\}\ .
\end{align}
Call $\ip{f,\Top g}$ the expression of \textbf{$\Top$ tested against $f,g$}.
By \cite[Section~2]{surendranath2024two}, for any $f:\R^{2n}\to\R$ that decays exponentially, the series in \eqref{e.sg}--\eqref{e.sgg} converge absolutely when tested against $f,1$.
It is straightforward to check that 
\begin{align}
	\label{e.scaling}
	\sgsum^{n,\theta'}_{\vecalpha}(t,x,x')
	=
	s^{n} \sgsum^{n,\theta'+\log s}_{\vecalpha}\big(st,\sqrt{s}x,\sqrt{s}x'\big)\ ,
	\qquad
	\theta'\in\R\ , \ s >0\ ,
\end{align}
and hence the same scaling property holds for $\sg$ and $\sgg$.

Below, we will derive a bound on 
$
	\E(\hk(r^2) \Ldot W^{ -L \log r^{-1} }_{0,1} \Rdot 1)^{n}\ ,
$
for $r\leq 1/2$ and $L\geq 1$, and use the scaling property \eqref{e.scaling} to infer a bound on $\E(\hk_{i-1}\Ldot W_{i} \Rdot 1)^{n}$.

We begin with a preliminary bound.
Let
\begin{align}
	\label{e.Wmom.AB}
	&\Aop^{\theta'}_{\alpha}
	:= 
	\int_{0}^{r^{-2}} \d t \, \Jop^{\theta'}_{\alpha}(t)\ ,
	&&
	\Bop^{1}_{\alpha'\alpha}
	:= 
	\int_{0}^{1} \d t \,  \heatsg_{\alpha'\alpha}(t)\ ,
	&&
	\Bop^{2}_{\alpha'\alpha}
	:= 
	\int_{1}^{r^{-2}} \d t \,  \heatsg_{\alpha'\alpha}(t)\ ,
\\
	\label{e.Wmom.C}
	&
	&&
	\Cop^{1}_{\alpha}
	:= 
	\int_{0}^{1} \d t \,  \heatsg_{\alpha}(t)^*\ ,
	&&
	\Cop^{2}_{\alpha}
	:= 
	\int_{1}^{r^{-2}} \d t \,  \heatsg_{\alpha}(t)^*\ ,
\end{align}
$\Bop_{\alpha'\alpha}:=\Bop^1_{\alpha'\alpha}+\Bop^2_{\alpha'\alpha}$, and $\Cop_{\alpha}:=\Cop^1_{\alpha}+\Cop^2_{\alpha}$.

\begin{lem}\label{l.Wmom.}
For $r\leq 1/2$, $L\geq 1$, and $\Aop_{\alpha}=\Aop_{\alpha}^{-(L+2)\log r^{-1}}$,
\begin{align}
	\label{e.Wmom.}
	\E ( \hk(r^2) \Ldot W^{ -L \log r^{-1} }_{0,1} \Rdot 1)^{n}
	\leq
	\sum_{\vecalpha\in\dgm_*[n]}
	\Ip{ 
		\hk(1)^{\otimes n} , 
		\Cop_{\alpha_{|\vecalpha|}} \Aop_{\alpha_{|\vecalpha|}} \Bop_{\alpha_{|\vecalpha|}\alpha_{|\vecalpha|-1}} \cdots \Bop_{\alpha_2\alpha_1} \Aop_{\alpha_1}\, 1
	}\ .
\end{align}
\end{lem}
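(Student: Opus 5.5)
The approach is to write the $n$th moment exactly as a diagram series, rescale to unit test functions, and then drop the simplex constraint on the time variables; this last step costs nothing because every kernel involved is nonnegative.

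First, by \eqref{e.Wmomformula} and \eqref{e.sgg}, with $\theta':=-L\log r^{-1}$,
\begin{align*}
	\E ( \hk(r^2) \Ldot W^{\theta'}_{0,1} \Rdot 1)^{n}
	=
	\sum_{\vecalpha\in\dgm_*[n]} \Ip{ \hk(r^2)^{\otimes n}, \sgsum^{n,\theta'}_{\vecalpha}(1)\, 1 }\ .
\end{align*}
Each term is a nonnegative number. By \cite[Section~2]{surendranath2024two}, the series converges absolutely when tested against $\hk(r^2)^{\otimes n},1$, so it may be manipulated termwise.

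Next, I would apply the scaling relation \eqref{e.scaling} to each $\sgsum^{n,\theta'}_{\vecalpha}$, together with the change of variables $x\mapsto x/r$ in the test function. The goal is that $\hk(r^2)^{\otimes n}$ becomes $\hk(1)^{\otimes n}$, the total time becomes $r^{-2}$, and the coupling constant is shifted by $\pm 2\log r^{-1}$. The shift should be tracked through the identity $\jfn^{\theta}(st)=s^{-1}\jfn^{\theta+\log s}(t)$, which follows directly from \eqref{e.jfn}. If the shift comes out more favorable than $-(L+2)\log r^{-1}$, I would use that $\jfn^{\theta}$ is increasing in $\theta$ to replace it by $\Jop^{-(L+2)\log r^{-1}}_{\alpha}$. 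This bookkeeping of the sign of the shift, and matching it with the monotonicity in the right direction, is the one point I expect to need care; the rest is mechanical.

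After rescaling, each term has the form
\begin{align*}
	\int_{\Sigma(r^{-2})} \d\vecu\ \Ip{\hk(1)^{\otimes n},\ \heatsg_{\alpha_1}(u_{\frac12})^*\Jop_{\alpha_1}(u_1)\heatsg_{\alpha_1\alpha_2}(u_{\frac32})\cdots\Jop_{\alpha_m}(u_m)\heatsg_{\alpha_m}(u_{m+\frac12})\,1}\ .
\end{align*}
Since $\heatsg_{\alpha_m}(u)1=1$, the last heat factor can be dropped. All remaining kernels are nonnegative, so I would enlarge the domain $\Sigma(r^{-2})$ to the product domain in which each $u_k$ ranges independently over $[0,r^{-2}]$. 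The integral then factorizes into the operators $\Cop_{\alpha}$, $\Aop_{\alpha}$, $\Bop_{\alpha'\alpha}$ of \eqref{e.Wmom.AB}--\eqref{e.Wmom.C}, where the splitting at $t=1$ in $\Bop,\Cop$ is merely a decomposition of $\int_0^{r^{-2}}$.

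Finally, the ordering in \eqref{e.Wmom.} is the reverse of the one in \eqref{e.sgsum}. To match it, I would take adjoints: the kernels of $\Jop_\alpha$ and $\heatsg_{\alpha'\alpha}$ are symmetric under swapping the roles of $\alpha,\alpha'$ and of $y,y'$ (reversing the diagram), so $\ip{f,\Top g}=\ip{\Top^*f,g}$ yields the stated product $\Cop_{\alpha_{|\vecalpha|}}\Aop_{\alpha_{|\vecalpha|}}\Bop\cdots\Aop_{\alpha_1}1$. Relabelling $\vecalpha\mapsto$ its reversal is a bijection of $\dgm_*[n]$. Summing over $\vecalpha$ then gives \eqref{e.Wmom.}.
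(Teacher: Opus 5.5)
Your proposal is correct and is essentially the paper's argument: expand via \eqref{e.sgg}, rescale with $s=r^{-2}$, discard the last factor using $\heatsg_{\alpha_{|\vecalpha|}}(u)1=1$, and use positivity of all kernels to enlarge $\Sigma(r^{-2})$ to a product of intervals $[0,r^{-2}]$. The sign you left open is settled by the identity $\jfn^{\theta}(st)=s^{-1}\jfn^{\theta+\log s}(t)$ with $s=r^{2}$, which gives exactly the coupling $-L\log r^{-1}-2\log r^{-1}=-(L+2)\log r^{-1}$, so no monotonicity step is needed (the sign in \eqref{e.scaling} as printed would instead give $-(L-2)\log r^{-1}$, the unfavorable direction, so recomputing it directly is right); also, the reversed operator order follows from the reversal bijection on $\dgm_*[n]$ alone, and the adjoint step should be dropped, since taking adjoints would move $\hk(1)^{\otimes n}$ into the wrong slot.
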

\begin{proof}
Use \eqref{e.sgg} to express the left hand side of \eqref{e.Wmom.} as a sum of integrals of operators tested against $\hk(r^2)^{\otimes n},1$ and apply the scaling property \eqref{e.scaling} for $\theta'=-L\log r^{-1}$ and $s=r^{-2}$.
In the result, observe that $\heatsg_{\alpha_{|\vecalpha|}}(u_{|\vecalpha|+1/2})1=1$, effectively removing the variable $u_{|\vecalpha|+1/2}$ and turning the domain of integration to $\sum u_{i} \in [0,r^{-2}]$.
Note that the kernels in \eqref{e.ops} and $\jfn$ are positive, so the integrand is also positive.
Release the range of integration to $u_i\leq r^{-2}$ for all $i$.
Doing so gives \eqref{e.Wmom.}.
\end{proof}

To continue, we need to develop some terminology.
Take as $[n]$ or $[n]_{\alpha}$ the index set, and consider a partition $\Omega=\{\omega_1,\ldots,\omega_{|\Omega|}\}$ of the index set.
Call a function $f$ on $\R^{2\,\cup\Omega}$ \textbf{$\Omega$-translation invariant} if it is defined everywhere except where variables coincide and is invariant under any translation where the amount of translation is constant within each $(y_j)_{j\in\omega_i}$.
More precisely,
\begin{align}
	&
	f: 
	\big\{
		y\in\R^{2\,\cup\Omega}\, \big| \,y_i\neq y_j,\forall ij\in\pair(\omega)\,,\, \omega\in\Omega
	\big\} 
	\longrightarrow \R\ ,
\\
	&
	f\big(((y_{i}+a_{\omega})_{i\in\omega})_{\omega\in\Omega}\big)=f(y)\,,\qquad
	\vec{a}\in\R^{2\Omega}\ .
\end{align}
Write $\norm{\cdott}_{\eta}:=\norm{\cdott}_{\Lsp^2(\R^{2\eta})}$ to simplify notation.
Given the translation invariance, the norm $\norm{\cdott}_{\cup\Omega}$ generally evaluates to infinity, and the proper notion of norm should mod out the translation.
Taking the case $\Omega=\{\omega\}$ for example, where $\omega=[n]$ or $[n]_{\alpha}$, we define the modulo-$\Omega$ norm by freezing any variable 
$
	\norm{f}_{\omega/\{\omega\}}
	:=
	\norm{f}_{\omega\setminus\{i\}}(y_i)
	=
	\sup_{y_i\in\R^2}
	\norm{f}_{\omega\setminus\{i\}}(y_i)
$
where the norm does not depend on the choice of $i\in\omega$ or $y_i\in\R^2$, and is hence equal to the written supremum.
More generally, taking any $i_\omega\in\omega$ and $y_\omega\in\R^2$ for each $\omega\in\Omega$, we define the \textbf{modulo-$\Omega$ $L^2$ norm} of an $\Omega$-translation invariant $f$ as
\begin{align}
	\label{e.modulo.norm}
	\norm{f}_{\cup\Omega/\Omega}
	:=
	\norm{ f }_{\cup\Omega\setminus\{i_\omega\,|\,\omega\in\Omega\}} (\vec{y}\,)
	=
	\sup_{\vec{y}\in\R^{2\Omega}} 
	\norm{ f }_{\cup\Omega\setminus\{i_\omega\,|\,\omega\in\Omega\}} (\vec{y}\,)\ .
\end{align}

We next bound the operators in Lemma~\ref{l.Wmom.} in the language of translation invariant functions
\begin{lem}\label{l.Wmom.transl}
Given any $\alpha'\neq\alpha\in\pair[n]$, any partition $\Omega$ of $[n]_{\alpha}$, any $\Omega$-translation invariant $\psi\geq 0$, and any $\Gop$ listed below, there exists a partition $O$ and an $O$-translation invariant $f \geq 0$ such that
\begin{subequations}
\begin{align}
	&\Gop \psi
	\leq
	c(n) g(r,L) f\ ,
	\quad
	\norm{f}_{\cup O/O} \leq \norm{\psi}_{\cup\Omega/\Omega}\ ,
\\
	&g(r,L) := 
	\begin{cases}
		\frac{1}{L\log r^{-1}} & \Gop = \Aop_{\alpha} := \Aop^{-(L+2)\log r^{-1}}_{\alpha}\ , \\
		1 & \Gop = \Bop^{1}_{\alpha'\alpha} \text{ or } \Cop^{1}_{\alpha}\ , \\
		\log r^{-1} &\Gop = \Bop^{2}_{\alpha'\alpha} \text{ or } \Cop^{2}_{\alpha}\ ,
	\end{cases}
\end{align}
\end{subequations}
for all $r\leq 1/2$ and $L\geq 1$.
\end{lem}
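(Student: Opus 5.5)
We treat the three types of operators separately, in each case writing down the output partition $O$ and the function $f$ explicitly and comparing modulo-norms by a change of variables. The guiding principle is that each kernel in \eqref{e.ops} is a product of 2d heat kernels. Integrating out one variable either preserves the translation structure (acting within a block of $\Omega$) or merges blocks; in the merging case the new partition $O$ is coarser, and the frozen variables can be rechosen accordingly. Throughout we use that $\psi\geq0$ and that all kernels are positive, so every inequality can be taken pointwise before norms are computed.

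For $\Gop=\Aop_\alpha$ we take $O=\Omega$ and $f=\hk(\tfrac t2)*\ldots$ averaged. More precisely, $\Jop_\alpha(t)$ acts as the heat semigroup (at rates $t$ and $t/2$) on each coordinate of $[n]_\alpha$, times the scalar $4\pi\jfn(t)$. The Gaussian part maps $\Omega$-translation-invariant functions to $\Omega$-translation-invariant functions. By Young's inequality, after freezing one variable per block, it is a contraction in the modulo-$\Omega$ $L^2$ norm: it is a convolution in the relative coordinates by a probability density. Hence $\Aop_\alpha\psi\leq(\int_0^{r^{-2}}4\pi\jfn^{\theta'}(t)\,\d t)\,f$, where $f$ is a convex average of such convolutions and $\norm{f}\leq\norm{\psi}$.

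It remains to bound the scalar with $\theta'=-(L+2)\log r^{-1}$. By the scaling $\jfn^{\theta'}(t)=s\,\jfn^{\theta'+\log s}(st)$ with $s=r^2$, this equals $\int_0^1\jfn^{-L\log r^{-1}}$. By \eqref{e.jfn.intbd} with $\theta=-L\log r^{-1}$ and $t=1$, it is at most $c/(L\log r^{-1}+1)$, which gives $g=1/(L\log r^{-1})$. Strictly, \eqref{e.jfn.intbd} is stated for $\theta\le c_0$; it extends to very negative $\theta$ directly from the formula \eqref{e.jfn}, since $\int_0^1 t^{a-1}\d t=1/a$ gives $\int_0^1\jfn^\theta=\int_0^\infty e^{\theta a}/\Gamma(a+1)\,\d a\le c/|\theta|$.

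For $\Bop_{\alpha'\alpha}$ and $\Cop_\alpha$ we distinguish cases according to the geometry of $\alpha,\alpha'$.
\begin{itemize}
\item When $\alpha\cap\alpha'=\emptyset$, the kernel \eqref{e.swapping} sends the center-of-mass variable $y_c$ to two coordinates $y'_i$, $i\in\alpha$, and collapses $\alpha'$ into $y'_c$. The output is invariant under translations of the blocks obtained by substituting $\alpha$ for $c$ in whichever block of $\Omega$ contained $c$, and by merging the blocks of $\Omega$ containing the elements of $\alpha'$ together with $c'$. This defines $O$.
\item When $|\alpha\cap\alpha'|=1$, the three heat kernels tie $y'_c$, $y_c$ and the remaining coordinates, and blocks are merged analogously.
\end{itemize}
In both cases, for fixed $t$ the resulting function is a Gaussian smoothing in the relative variables times one extra factor $\hk(t,\cdot)$ coming from the newly separated variable. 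This factor is integrated against the frozen variables, so its $L^2$ mass in the unfrozen direction is $\le c/t$. More precisely, freezing a variable in each new block, the $L^2$ norm is bounded by $\norm{\psi}$ times $\sup_x\hk(t,x)\cdot t\leq c$ per unit of $\d t/t$.

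Integrating $\d t$ gives a factor $\int_0^1$ of a bounded quantity for the $\Bop^1,\Cop^1$ pieces, which yields $g=1$. There, for small $t$, one should use the $L^1$ normalization $\int\hk=1$ rather than the sup bound, and the Gaussian at scale $\sqrt t$ is then just a contraction. For the $\Bop^2,\Cop^2$ pieces one uses the sup bound and gets $\int_1^{r^{-2}}\d t/t=2\log r^{-1}$, which yields $g=\log r^{-1}$. $\Cop_\alpha$ is handled the same way, since $\heatsg_\alpha^*$ is just the splitting of $c$ into $\alpha$ without merging.

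The main obstacle is the bookkeeping in the $\Bop$ cases. One must choose $f$ as a single $O$-invariant function dominating the $t$-integral, so we define $f$ as the normalized $t$-average of the integrand. One must also verify that freezing one variable per new block controls the sup in \eqref{e.modulo.norm} uniformly, without losing factors depending on $t$ in the $\Bop^1$ regime. We would check this by an explicit Gaussian computation in the case where $\Omega$ has one block, and then reduce to that case block by block.
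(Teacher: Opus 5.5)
The gap is in $\Bop^1$. Your $\Aop_\alpha$ step is fine and matches the paper. (There is a sign slip: the scaling is $\jfn^{\theta}(t)=s\,\jfn^{\theta-\log s}(st)$, but your end expression $\int_0^1\jfn^{-L\log r^{-1}}$ is correct.) Your $\Bop^2,\Cop^2$ route is also essentially the paper's: bound one heat kernel by $1/(2\pi t)$, then integrate $\d t/t$ over $[1,r^{-2}]$.

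For $\Bop^1$ you claim that when $t\le 1$, using $\int\hk=1$, the operator $\heatsg_{\alpha'\alpha}(t)$ is a contraction (or at least uniformly bounded) from $\norm{\cdot}_{\cup\Omega/\Omega}$ to $\norm{\cdot}_{\cup O/O}$. Your normalized $t$-average would then give $g=1$. This claim is false.

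Take $n=3$, $\alpha=12$, $\alpha'=23$, $\Omega=\{\{\textc,3\}\}$ and $\psi=\psi_0(y_3-y_{\textc})$. Then $\norm{\psi}_{\cup\Omega/\Omega}=\norm{\psi_0}_{L^2(\R^2)}$, your merging rule gives $O=\{\{\textc',1'\}\}$, and $\heatsg_{\alpha'\alpha}(t)\psi=F_t(y'_1-y'_{\textc})$ with
\[
F_t(z)=\int_{\R^2}\d w\,\hk(t,z+w)\,\hk(t,w)\,\big(\hk(t)*\psi_0\big)(w)\ .
\]
For $\psi_0=\ind_{|u|\le\sqrt t}$ one gets $F_t\ge c^{-1}t^{-1}$ on $|z|\le\sqrt t$, hence $\norm{F_t}_{L^2}\ge c^{-1}t^{-1}\norm{\psi_0}_{L^2}$.

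This is structural. Suppose the kernel in \eqref{e.swapping} does not merge two distinct blocks of $\Omega$: for instance $\textc$ and the element of $\alpha'\setminus\alpha$ lie in one block, or $3,4$ lie in one block when $\alpha=12,\alpha'=34$. Then $|O|=|\Omega|$. Brownian scaling ($n$ heat kernels against $n-1$ integrations, with the same number of free variables on both sides) then makes the fixed-$t$ norm exactly $t^{-1}$ times its value at $t=1$. Neither $\int\hk=1$ nor $\sup\hk$ avoids this, so averaging slice bounds over $(0,1]$ costs $\int_0^1\d t/t=\infty$. The one-block Gaussian computation you propose as a final check is exactly this bad case.

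The bound does hold, but only for the time-integrated operator. In the example,
\[
\widehat{F_t}(k)=\int\frac{\d q}{(2\pi)^2}\,e^{-\frac t2(|k|^2+|q|^2+|k+q|^2)}\,\widehat{\psi_0}(q)\ ,
\]
so $\int_0^1F_t\,\d t$ has a positive Fourier kernel comparable to $(1+|k|^2+|q|^2)^{-1}$. That kernel is bounded on $L^2(\R^2)$ by a Schur test with weight $|k|^{-1}$, but not by summing slice norms. The near-extremizers at time $t$ live at frequency $t^{-1/2}$, and the gain comes from near-orthogonality across scales.

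The paper imports exactly this as \cite[Lemma~8.2(b)]{gu2021moments}: $\int_0^1\d t\,\heatsg_{\alpha'\alpha}(2t)$ is bounded from $L^2(\R^{2[n]_\alpha})$ to $L^2(\R^{2[n]_{\alpha'}})$. To pass to modulo norms, it multiplies by $e^{-|y'_{i_o}|}$ on the frozen output variables. It moves these weights through the kernel using $\hk(t,y'-y)e^{-a|y'|}\le 2e^{a^2t}\hk(2t,y'-y)e^{-a|y|}$, uniformly for $t\le1$, which gives \eqref{e.swapping.bd}. It then converts weighted $L^2$ norms back into modulo norms via \eqref{e.modulo.norm.}. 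Your proof needs this ingredient, or an equivalent Fourier/Schur argument carried out directly in the modulo setting.

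Two smaller remarks. Your slice-wise route does suffice for $\Cop^1$, where the fixed-$t$ norm is $O(t^{-1/2})$. In $\Bop^2$ with $\alpha=12$, $\alpha'=34$ and $3,4$ in one block, what remains after the sup bound is not a contraction: it has norm $(4\pi t)^{-1/2}$, which is $\le 1$ only because $t\ge1$ there.
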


\begin{proof}
Let us lay out the proof structure.
Write $c(n)=c$ throughout this proof.
The proof of the statements for $\Cop^{1}_{\alpha}$ and $\Cop^{2}_{\alpha}$ are similar to those for $\Bop^{1}_{\alpha'\alpha}$ and $\Bop^{2}_{\alpha'\alpha}$, so we consider only the latter and that for $\Aop_{\alpha}$.
In each case of $\Gop=\Aop_{\alpha}, \Bop^{1}_{\alpha'\alpha}, \Bop^{2}_{\alpha'\alpha}$, we will construct a suitable $O$ and a suitable integral operator $\Hop(t)$ with a positive kernel such that
\begin{align}
	\label{e.tranl.Gop}
	\Gop \psi_{\Omega} 
	\leq 
	\int_{s}^{s'} \d t\, \Hop(t) \psi_{\Omega}\ ,
	\quad
	(s,s') = 
	\begin{cases}
		(0,r^{-2})\ , & \Gop=\Aop_{\alpha}:=\Aop^{-(L+2)\log r^{-1}}_{\alpha}\ ,  \\ 
		(0,1)\ , & \Gop=\Bop^1_{\alpha'\alpha}\ ,  \\ 
		(1,r^{-2})\ , & \Gop=\Bop^2_{\alpha'\alpha}\ ,
	\end{cases}
\end{align}
and that $\Hop(t)$ maps $\Omega$-translation invariant functions to $O$-translation invariant functions.
Further, letting $\normopm{\cdott}$ denote the operator norm from $\norm{\cdott}_{\, \cup\Omega/\Omega}$ to $\norm{\cdott}_{\, \cup O/O}$, we will show that
\begin{align}
	\label{e.tranl.Hop}
	\NOrmopm{ \int_{s}^{s'}\d t\, \Hop(t) }
	\leq
	\begin{cases}
		c/(L\log r^{-1})\ , & \Gop=\Aop_{\alpha}:=\Aop^{-(L+2)\log r^{-1}}_{\alpha}\ ,  \\
		c\ , & \Gop=\Bop^1_{\alpha'\alpha}\ ,  \\ 
		c\log r^{-1}\ , & \Gop=\Bop^2_{\alpha'\alpha}\ .
	\end{cases}	
\end{align}
Once these are done, combining \eqref{e.tranl.Gop}--\eqref{e.tranl.Hop} gives the desired results.

\medskip
\textbf{The case $\Gop=\Aop_{\alpha}$.}\
Take $O=\Omega$, let $\Hop'(t) := \hk(t/2)\otimes \hk(t)^{\otimes [n]_{\alpha}}$, and take $\Hop(t) = \jfn(t)\Hop'(t)$.
By definition, both sides of the inequality in \eqref{e.tranl.Hop} are equal, and it is readily checked that $\Hop(t)$ maps $\Omega$-translation invariant functions to $\Omega$-translation invariant functions.
Pick $i_{\omega}$ arbitrarily as before \eqref{e.modulo.norm} and write $\Hop'(t)\psi$ as an integral.
In the integral, use the fact that the heat kernel integrates to $1$ for $i\in\{i_{\omega}\,|\,\omega\in\Omega\}$ and the fact that the heat kernel contracts the $\Lsp^2$ norm for $i\notin\{i_{\omega}\,|\,\omega\in\Omega\}$.
Doing so shows that $\norm{\Hop'(t)\psi}_{\cup O/O} \leq \norm{\psi}_{\cup \Omega/\Omega}$ and therefore $\normopm{\Hop'(t)}\leq 1$.
This together with \eqref{e.jfn.intbd} verifies \eqref{e.tranl.Hop}.

\medskip
\textbf{The case $\Gop=\Bop^{1}_{\alpha'\alpha}$.}\
Take $\alpha=12$ and $\alpha'=23$ or $34$ to simplify notation.

We start by constructing $O$ and $\Hop(t)$.
Simply take $\Hop(t)=\heatsg_{\alpha'\alpha}(t)$ so that both sides of the inequality in \eqref{e.tranl.Gop} are equal.
To construct $O$, view $[n]_{\alpha}$ and $[n]_{\alpha'}$ as disjoint sets of vertices.
Slightly abusing notation, we write elements in $[n]_{\alpha}$ as $\textc,3,4$ etc and elements in $[n]_{\alpha'}$ as $\textc',1',5'$ etc, using prime to signify that the sets are disjoint.
In \eqref{e.swapping}, view $\yc',y'_{1}$ etc as being indexed by $\textc',1'$ and form an edge between a vertex in $[n]_{\alpha}$ and a vertex in $[n]_{\alpha'}$ if the corresponding $y$ and $y'$ variables appear in the same heat kernel in \eqref{e.swapping}.
This gives a bipartite graph, which we call $G^1$; see Figure~\ref{f.graph}.
Next, in $G^1$ add an edge between vertices in $[n]_{\alpha}$ if they belong to the same $\omega\in\Omega$, and call the resulting graph $G^1_{\Omega}$.
The connected components of $G^1_{\Omega}$ induce a partition on $[n]_{\alpha'}$, and we let $O$ be that partition.
It is readily checked that $\Hop(t)=\heatsg_{\alpha'\alpha}(t)$ maps $\Omega$-translation invariant functions to $O$-translation invariant functions.

\begin{figure}
\includegraphics[width=\linewidth]{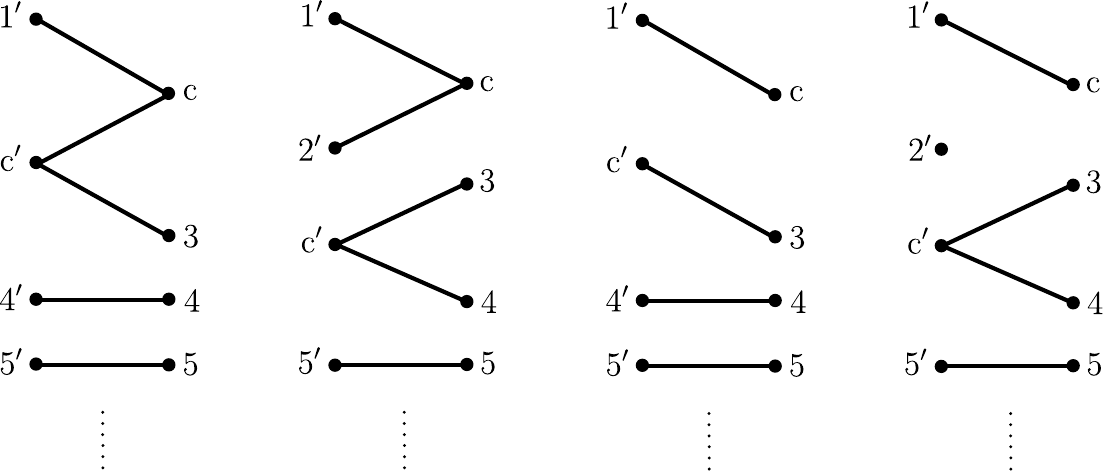}
\caption{From left to right, the graphs are $G^1$ for $\alpha=12$ and $\alpha=23$, $G^1$ for $\alpha=12$ and $\alpha=34$, $G^2$ for $\alpha=12$ and $\alpha=23$, and $G^2$ for $\alpha=12$ and $\alpha=34$.}
\label{f.graph}
\end{figure}

Next, let us specify our choice of $i_{\omega}$ and $i_{o}$ for $\omega\in\Omega$ and $o\in O$.
For $i\in[n]_{12}$, write $\omega_i$ for the $\omega\in\Omega$ that contains $i$; do similarly for $o_{i}$.
\begin{description}
\item[Choosing $i_{\omega}$ and $i_{o}$ when $\alpha=12$ and $\alpha'=23$]
Choose $i_{\omega_{\textc}}=\textc$, choose $i_{\omega_3}=3$ if $3\notin\omega_\textc$ (otherwise $i_{\omega_3}=i_{\omega_\textc}=\textc$), and choose $i_{o_{\textc'}}=i_{o_{1'}}=\textc'$.
The rest $\omega$s and $o$s are in bijection.
More precisely, writing $\pi:4\mapsto 4', 5\mapsto 5',\ldots$, we have $O\setminus\{o_{\textc'},o_{1'}\} = \{ \pi\omega\,| \omega\in\Omega\setminus\{\omega_{\textc},\omega_{3}\} \}$.
Choose $i_{\omega}$ arbitrarily for those $\omega$ in the last set, and choose $i_{o}=i_{\omega}$ for $o=\pi\omega$.
\item[Choosing $i_{\omega}$ and $i_{o}$ when $\alpha=12$ and $\alpha'=34$]
Choose $i_{\omega_{\textc}}=\textc$, choose $i_{\omega_3}=3$ if $3\notin\omega_\textc$ (otherwise $i_{\omega_3}=i_{\omega_\textc}=\textc$), choose $i_{\omega_4}=4$ if $4\notin\omega_\textc\cup\omega_3$ (otherwise similarly), choose $i_{o_{\textc'}}=\textc'$, and choose $i_{o_{1'}}=i_{o_{2'}}=1'$ if $1',2'\notin o_{\textc'}$ (otherwise similarly).
The rest $\omega$s and $o$s are in bijection just like before and we choose them similarly.
\end{description}

Let us verify \eqref{e.tranl.Hop}.
Using the readily verified inequality (see the proof of \cite[Equation~(2.15)]{surendranath2024two} for example)
$
	\hk(t,y'-y)e^{-a|y'|} \leq 2 e^{a^2t} \hk(2t,y'-y) e^{-a|y|}
$,
$a>0$,
in the kernel of $\heatsg_{\alpha'\alpha}(t)$ (in \eqref{e.swapping}) gives
\begin{align}
	\label{e.swapping.bd}
	\prod_{o\in O} e^{-|y'_{i_{o}}|} 
	\cdot 
	\heatsg_{\alpha'\alpha}(t,y',y)
	\leq
	c\, \heatsg_{\alpha'\alpha}(2t,y',y)
	\prod_{\omega\in\Omega} e^{-|y_{i_{\omega}}|/2}\ ,
	\quad
	t\in(0,1]\ .
\end{align}
Next, by definition for any $\Omega$-translation invariant $f$ and for any $g\in\Lsp^2(\R^2)$,
\begin{align}
	\label{e.modulo.norm.}
	\NOrm{
		f(y)\,\prod_{\omega\in\Omega} g(y_{i_\omega})
	}_{\Lsp^2(\R^{2\,\cup\Omega})}
	=
	\norm{f}_{\cup\Omega/\Omega}
	\prod_{\omega\in\Omega}\norm{g}_{\Lsp^2(\R^2)}
	\ .
\end{align}
Using this for $\Omega=\Omega$ and $g(y)=e^{-|y|/2}$ and for $\Omega\mapsto O$ and $g(y')=e^{-|y'|}$ in \eqref{e.swapping.bd} shows that $\normopm{\int_0^1 \d t \heatsg_{\alpha'\alpha}(t)}$ is bounded by a constant multiple of the $\Lsp^2(\R^{2[n]_{\alpha}})\to \Lsp^2(\R^{2[n]_{\alpha'}})$ operator norm of $\int_0^1 \d t\, \heatsg_{\alpha'\alpha}(2t)$; the latter is bounded by $c$ by \cite[Lemma~8.2(b)]{gu2021moments}.

\medskip
\textbf{The case $\Gop=\Bop^{2}_{\alpha'\alpha}$.}\
Take $\alpha=12$ and $\alpha'=23$ or $34$ to simplify notation.

We start by constructing $\Hop(t)$ and $O$.
In the kernel of $\heatsg_{\alpha'\,\alpha}(t)$ in \eqref{e.swapping}, bound $\hk(t,\yc'-\yc)\leq 1/2\pi t$ when $\alpha'=23$ and bound $\hk(t,y'_2-\yc)\leq 1/2\pi t$ when $\alpha'=34$.
We have
\begin{align}
	\label{e.Bop.bd}
	\Bop^{2}_{\alpha'\alpha}\, \psi
	&\leq
	\int_1^{r^{-2}} \frac{\d t}{2\pi t} \Hop^2(t)\psi\ ,
\\
	\label{e.Hop'}
	\Hop^2(t,y',y) 
	&:=
	\begin{cases}
		\hk(t, y'_1-\yc) \hk(t,\yc'-y_3) \prod_{i>3} \hk(t,y'_i-y_i)\ , & \alpha'= 23\ , \\
		\hk(t, y'_1-\yc) \prod_{j=3,4}\hk(t,\yc'-y_j) \cdot \prod_{i>4} \hk(t,y'_i-y_i)\ , & \alpha'= 34\ .
	\end{cases}
\end{align}
Let $\Hop(t):=\frac{1}{2\pi t}\Hop^2(t)$ so that \eqref{e.tranl.Gop} holds.
Next, construct the graph $G^2$ similarly to how we constructed $G^1$ but using the heat kernels in \eqref{e.Hop'} as the reference instead of \eqref{e.swapping}.
Since $\Hop^2$ arises by the bounding procedure before \eqref{e.Bop.bd}, the graph $G^2$ is the same as $G^1$ with the edge between $\textc'$ and $\textc$ removed when $\alpha'=23$ and the edge between $2'$ and $\textc$ removed when $\alpha'=34$; see Figure~\ref{f.graph}.
Next, in $G^{2}$, make an edge between the vertices in $[n]_{\alpha}$ if they belong to the same $\omega\in\Omega$, and let $G^{2}_{\Omega}$ denote the resulting graph.
The connected components of $G^{2}_{\Omega}$ induce a partition on $[n]_{\alpha'}$, and we let $O$ be that partition.
It is readily checked that $\Hop^2(t)$ and hence $\Hop(t)$ map $\Omega$-translation invariant functions to $O$-translation invariant functions. 

We now verify \eqref{e.tranl.Hop}.
Choose $i_{\omega}$ and $i_{o}$ the same way as in the previous case, except that here we make $i_{o_{2'}}=2'$ when $\alpha'=34$:
Unlike in the previous case, here $o_{2'}=\{2'\}$.
Write $\Hop^2(t)\psi$ as an integral with the aid of \eqref{e.Hop'}.
\begin{description}
\item[When $\alpha=12$ and $\alpha'=23$, or $\alpha=12$ and $\alpha'=34$ and $i_{\omega_3}\neq i_{\omega_4}$]
In the integral, use the fact that the heat kernel integrates to $1$ for $i\in\{i_{\omega}\,|\,\omega\in\Omega\}$ and the fact that the heat kernel contracts the $L^2$ norm for $i\notin\{i_{\omega}\,|\,\omega\in\Omega\}$.
Doing so shows that $\norm{\Hop^2(t)\psi}_{\cup O/O}\leq \norm{\psi}_{\cup \Omega/\Omega}$ and therefore $\normopm{\Hop^2(t)}\leq 1$.
\item[When $\alpha=12$, $\alpha'=34$, and $i_{\omega_3}= i_{\omega_4}$]
In the integral, use the fact that the heat kernel integrates to $1$ for $i\in\{i_{\omega}\,|\,\omega\in\Omega\}\setminus\{4\}$ and the fact that the heat kernel contracts the $L^2$ norm for $i\notin\{i_{\omega}\,|\,\omega\in\Omega\}$, and apply the Cauchy--Schwarz inequality over $y_4$ with the aid of $\int_{\R^2} \d y_4 \,\hk(t,\yc'-y_4)^2 = 1/4\pi t$.
Doing so gives $\norm{\Hop^2(t)\psi}_{\cup O/O}\leq \norm{\psi}_{\cup \Omega/\Omega}(4\pi t)^{-1/2}$, so $\normopm{\Hop^2(t)}\leq (4\pi t)^{-1/2}\leq 1$, since here $t\geq 1$.
\end{description}
Combining $\normopm{\Hop^2(t)}\leq 1$ with $\Hop(t):=\frac{1}{2\pi t}\Hop^2(t)$ verifies \eqref{e.tranl.Hop}.
\end{proof}

We are now ready to prove the needed moment bounds on $W$.
Recall that $W^{\theta}_i=W^{\theta}_{t_{i-1},t_i}$ and $t_i$ depends on $\bb$.
\begin{cor}\label{c.Wmom}
For every $n\in \N$, there exist $c_1=c_1(n)$ and $c_2=c_2(n,c_0)$ such that 
\begin{align}
	\label{e.Wmom}
	\E ( \hk(r^2) \Ldot W^{ -L \log r^{-1} }_{0,1} \Rdot 1)^{n}
	&\leq
	c_1\, L^{-\lceil n/2\rceil}\ ,
\\
	\label{e.Wi.mom}
	\E ( \hk_{i-1} \Ldot W^{\theta}_{i} \Rdot 1)^{n} &\leq c_1\, (N-i)^{-\lceil n/2\rceil}\ ,
\end{align}
for all $L\geq c$, $r\in(0,1/2]$, $i\in[1,N-c]$, $\bb\leq 1/2$, and $\theta\leq c_0$.
\end{cor}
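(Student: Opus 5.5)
We start from Lemma~\ref{l.Wmom.}, which bounds the left hand side of \eqref{e.Wmom} by a sum over diagrams $\vecalpha\in\dgm_*[n]$ of $\ip{\hk(1)^{\otimes n},\Cop_{\alpha_{|\vecalpha|}}\Aop_{\alpha_{|\vecalpha|}}\Bop\cdots\Bop\Aop_{\alpha_1}1}$. The plan is to expand each $\Bop=\Bop^1+\Bop^2$ and $\Cop=\Cop^1+\Cop^2$, and then apply Lemma~\ref{l.Wmom.transl} operator by operator from right to left. The constant function $1$ is $\Omega$-translation invariant for the one-block partition, with modulo norm $1$. Each $\Aop$ then contributes a factor $c/(L\log r^{-1})$, each $\Bop^1,\Cop^1$ contributes $c$, and each $\Bop^2,\Cop^2$ contributes $c\log r^{-1}$. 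The final pairing with $\hk(1)^{\otimes n}$ is controlled through \eqref{e.modulo.norm.}: we bound $\hk(1)$ by products of an $L^2$ function of the frozen variables, so the pairing is at most $c$ times the modulo norm. Hence a diagram of length $m$ contributes at most $c^m (L\log r^{-1})^{-m}(\log r^{-1})^{m} = (c/L)^m$, since there are exactly $m$ factors $\Aop$ and $m$ factors from $\Bop$ or $\Cop$ (namely $m-1$ of type $\Bop$ and one $\Cop$). The $\log r^{-1}$ factors cancel.

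Next we sum over $\vecalpha\in\dgm_*[n]$. Because $\alpha_1\cup\cdots\cup\alpha_m=[n]$ and each pair covers two indices, every such diagram has $m\ge\lceil n/2\rceil$. The number of diagrams of length $m$ is at most $\binom{n}{2}^m$. For $L\ge c$ large enough, $\sum_{m\ge\lceil n/2\rceil}(c(n)/L)^m\le c_1 L^{-\lceil n/2\rceil}$, which gives \eqref{e.Wmom}. Convergence and interchange of the series are justified by positivity of all the kernels, since we are only bounding.

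For \eqref{e.Wi.mom}, we rescale by \eqref{e.scaling} with $s=t_i^{-1}$. This maps $[t_{i-1},t_i]$ to $[\bb^2,1]$ and the initial kernel $\hk_{i-1}$ to $\hk(\bb^2)$, after also shifting the start to time $0$ using time-homogeneity; the heat kernel at time $\bb^2$ composed with the shift yields $\hk(r^2)$ with $r^2\approx$ a multiple of $\bb^2$. Time homogeneity handles the start time exactly: $W_{t_{i-1},t_i}$ has the law of $W_{0,t_i-t_{i-1}}$, and rescaling by $(t_i-t_{i-1})^{-1}$ gives interval $[0,1]$ with initial kernel $\hk(r^2)$, $r^2=t_{i-1}/(t_i-t_{i-1})=\bb^2/(1-\bb^2)\le 1/4$ for $\bb\le1/2$. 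The coupling becomes $\theta'=\theta+\log(t_i-t_{i-1})\le c_0-\log t_i^{-1}+c$. We want to write $\theta'\le -L\log r^{-1}$ with $L\asymp (N-i)$, using \eqref{e.logti}: $\log t_i^{-1}\ge 2\log\bb^{-1}(N-i)$ and $\log r^{-1}\approx\log\bb^{-1}$. This gives $L\ge c'(N-i)$ once $i\le N-c$, so that the constants $c_0$ and $c$ are absorbed. Since $\sgg^{n,\theta}$ has nonnegative kernel, monotone in $\theta$ (as $\jfn^\theta$ is increasing in $\theta$), we can replace $\theta'$ by the smaller value $-L\log r^{-1}$. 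For even $n$ this is a moment of a nonnegative-kernel expression; for odd $n$ we bound $\E X^n\le \E|X|^n$ by Hölder between $n-1$ and $n+1$. Monotonicity then yields \eqref{e.Wi.mom} from \eqref{e.Wmom}.

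The main obstacle is the final pairing step and the bookkeeping of the $\log r^{-1}$ factors. We must check that the $\Cop$ operator closing the diagram, applied against $\hk(1)^{\otimes n}$, pays only $\log r^{-1}$ and not extra factors, and that no $\Bop^2$ occurs without a matching $\Aop$. Both hold because the operators strictly alternate. A secondary concern is monotonicity in $\theta$ for odd $n$, which we handle via Hölder as above, accepting a possible adjustment of $\lceil n/2\rceil$ through interpolation. If the interpolation loses too much, we would instead redo the bound directly with absolute values inside Lemma~\ref{l.Wmom.}.
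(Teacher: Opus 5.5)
Your route is the paper's: Lemma~\ref{l.Wmom.} followed by Lemma~\ref{l.Wmom.transl} applied factor by factor, cancellation of the $\log r^{-1}$'s between the $m$ factors $\Aop$ and the $m-1$ factors $\Bop$ plus one $\Cop$, the count $\binom{n}{2}^m$ with $m\ge\lceil n/2\rceil$, and then time-homogeneity plus scaling for \eqref{e.Wi.mom}. Your rescaled coupling $\theta+\log(t_i-t_{i-1})$ has the right sign. Two steps, however, fail as written.

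\textbf{The starting partition.} The constant function $1$ on $\R^{2[n]_{\alpha_1}}$ has to be treated as translation invariant for the \emph{finest} partition $\Omega_1=\{\{i\}\,|\,i\in[n]_{\alpha_1}\}$. There every variable is frozen and $\norm{1}_{[n]_{\alpha_1}/\Omega_1}=1$. For the one-block partition and $n\ge 3$, freezing a single variable leaves the $\Lsp^2(\R^{2(n-2)})$ norm of a constant. That norm is infinite, so Lemma~\ref{l.Wmom.transl} then gives nothing.

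\textbf{Odd $n$.} Your handling of odd $n$ in \eqref{e.Wi.mom} does not give the claimed exponent. For $n=2k+1$, Cauchy--Schwarz between the $2k$-th and $(2k+2)$-th moments yields only $(N-i)^{-k-1/2}=(N-i)^{-n/2}$, not $(N-i)^{-(k+1)}$. Your fallback of putting absolute values inside Lemma~\ref{l.Wmom.} is not available either, because the diagram expansion computes $\E X^n$, not $\E|X|^n$.

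The detour is also unnecessary. By \eqref{e.Wmomformula} and \eqref{e.sgg}, $\E(\hk_{i-1}\Ldot W^{\theta}_i\Rdot 1)^n=\ip{\hk_{i-1}^{\otimes n},\sgg^{n,\theta}(t_i-t_{i-1})1}$ for \emph{every} $n$. The kernel is nonnegative and increasing in $\theta$ through the $\jfn^{\theta}$'s, so your monotonicity argument covers odd $n$ directly. Note that you are replacing $\theta'$ by the \emph{larger} value $-L\log r^{-1}$, not the smaller one.

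The paper avoids monotonicity altogether by defining $L:=-\theta'/\log r^{-1}$ exactly, so that the rescaled coupling equals $-L\log r^{-1}$. Then \eqref{e.logti}, together with $-\theta'\ge \log t_i^{-1}-c_0$ and $\log r^{-1}\le\log\bb^{-1}$, gives $L\ge 2(N-i)-c\ge N-i$ for $i\le N-c$.

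\textbf{A harmless slip.} $r^2=\bb^2/(1-\bb^2)$ equals $1/3$ at $\bb=1/2$, so $r\le 1/2$ fails for $\bb\in(5^{-1/2},1/2]$. Since \eqref{e.Wmom} only uses $\log r^{-1}\ge c^{-1}$, it still holds for $r\le 3^{-1/2}$ with adjusted constants.
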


\begin{proof}
To prove \eqref{e.Wmom}, combine the latter two results in Lemma~\ref{l.Wmom.transl} using $\Bop_{\alpha'\alpha}=\Bop^1_{\alpha'\alpha}+\Bop^2_{\alpha'\alpha}$ and $\Cop_{\alpha}=\Cop^1_{\alpha}+\Cop^2_{\alpha}$ to obtain that, for every $\Omega$ and $\psi$ as in Lemma~\ref{l.Wmom.transl},
\begin{align}
	\label{e.Bop.Cop.bd}
	\Bop_{\alpha'\alpha} \psi \leq c(n)\log r^{-1} \sum_{O} f_O\ ,
	\quad
	\Cop_{\alpha} \psi \leq c(n) \sum_{O} g_O\ ,	 
\end{align}
for some nonnegative $O$-translation invariant $f_O$ and $g_O$ with $\norm{f_O}_{\cup O/O} ,\norm{g_O}_{\cup O/O} \leq \norm{\psi}_{\cup \Omega/\Omega}$, where the first sum runs over all partitions of $[n]_{\alpha'}$ and the second sum runs over all partitions of $[n]$.
Use \eqref{e.Bop.Cop.bd} and the first result in Lemma~\ref{l.Wmom.transl} in \eqref{e.Wmom.}, note that $1$ is $\Omega_1$-translation invariant for $\Omega_1=\{\{i\}\,|\,i\in[n]_{\alpha_1}\}$ and $\norm{1}_{[n]_{\alpha_1}/\Omega_1}=1$, and use the property that $\ip{\hk(1)^{\otimes n},f}\leq (4\pi)^{(n-|O|)/2}\norm{f}_{[n]/O}$.
Doing so shows that the inner product is bounded by $(c(n)/L)^{|\vecalpha|}$.
Observe that there are at most $\binom{n}{2}^{m}$ many $\vecalpha$s with $|\vecalpha|=m$ and that every $\vecalpha\in\dgm_*[n]$ satisfies $|\vecalpha|\geq\lceil n/2\rceil$.
Bounding the sum using these observations gives \eqref{e.Wmom}.

The bound~\eqref{e.Wi.mom} follows from \eqref{e.Wmom}.
To see why, use the analog of \eqref{e.scaling} for $W$, set $s=t_{i}-t_{i-1}$, take $r=(t_{i-1}/t_i)^{1/2}$ and $L=(-\theta+\log s^{-1})/\log r^{-1}$, and bound $\log r^{-1}\leq c\log\bb^{-1}$ and $\log s^{-1} \geq 2\log \bb^{-1}\cdot (N-i) - c $ with the aid of \eqref{e.ti} and \eqref{e.logti}.
\end{proof}

Let us now prove Lemma~\ref{l.clt}.
\begin{proof}[Proof of Lemma~\ref{l.clt}]
Recall $\Wevent$ and $\Wevent_i$ from \eqref{e.Wevent} and let 
\begin{align}
	X_i:=  Y_i - \E\, Y_i\ , \qquad  
	Y_i := \ind_{\Wevent_i} \log \hk_{i-1}\Ldot Z_{i}\Rdot 1\ .
\end{align}
We seek to apply Lindeberg's CLT to $Y_1+\ldots+Y_{N-\ell}$.
To this end, write $\hk_{i-1}\Ldot Z_{i}\Rdot 1=1+\hk_{i-1}\Ldot W_{i}\Rdot 1$, Taylor expand the $\log$ accordingly, noting that $\Wevent_i$ implies $\hk_{i-1}\Ldot W_{i}\Rdot 1 \geq -1/2$.
Doing so gives
\begin{align}
    \label{e.clt.Yi}
	Y_i = \ind_{\Wevent_i} \, \hk_{i-1}\Ldot W_{i}\Rdot 1 - \ind_{\Wevent_i} \,\tfrac{1}{2}\, (\hk_{i-1}\Ldot W_{i}\Rdot 1)^2 + U_i\ ,
	\qquad
	|U_i| \leq c\, |\hk_{i-1}\Ldot W_{i}\Rdot 1|^3\ .
\end{align}
Adopt the big O notation.
Given \eqref{e.clt.Yi}, using \eqref{e.Wi.mom} and $\P[\Wevent^{\compl}_i] \leq c(k)/(N-i)^{k}$ for $i\geq c(k)$ and every $k\geq 1$ (which follows from \eqref{e.Wi.mom}) yields
\begin{align}
	\E\, Y_i = -\tfrac{1}{2} \E\, (\hk_{i-1}\Ldot W_{i}\Rdot 1)^2 + O(\tfrac{1}{(N-i)^{3/2}})\ ,
	\ \ 
	\Var\, Y_i = \E\, (\hk_{i-1}\Ldot W_{i}\Rdot 1)^2 + O(\tfrac{1}{(N-i)^2})\ ,
\end{align}
and $\E\,(Y_i-\E\, Y_i)^4 \leq c/(N-i)^2$.
Further, by \eqref{e.W.2ndmom} and the definition of $N$ in \eqref{e.ti},
\begin{align}
    \sum_{i=1}^{N-\ell}\E\, (\hk_{i-1}\Ldot W_{i}\Rdot 1)^2
    =
    (1+f(\e,\ell,\bb))\log\log\e^{-1} \ ,
\end{align}
for some $f$ such that $\lim_{(\ell,\bb)\to(\infty, 0)}\lim\sup_{\e\to 0} |f|= 0$. 
Given these properties, applying Lindeberg's CLT for fixed $\ell,\bb^{-1}$ and later sending $(\ell,\bb)\to(\infty,0)$ show that \eqref{e.clt} holds for $Y_1+\ldots+Y_{N-\ell}$ in place of $H'_{\e,\ell,\bb}$.
Since these two random variables are the same on $\Wevent$ and since the probability of $\Wevent$ tends to $1$ by \eqref{e.Wevent.prob}, the desired result \eqref{e.clt} follows.
\end{proof}

\section{Conditional GMC}
\label{s.a.gmc}
We begin by recalling the GMC formalism used in \cite{clark2025conditional}, which mostly draws on \cite{shamov2016gaussian}, and we remark that the theory of GMC was first developed by \cite{kahane1985} and point to \cite{rhodes2014gaussian} for a review. 
Take a Polish space $\pspace$, take a positive locally finite (finite on bounded sets) Borel measure $\base$ on $\pspace$ as the base measure, write $\Cloc(\pspace)$ for the space of bounded continuous functions with bounded support, and let $\noise=(\noise_1,\noise_2,\ldots)$ consist of iid Gaussians.
Next, take as the covariance kernel a symmetric $\kernel:\pspace^2\to[0,\infty]$.
Put $\base_g(\d x):=\base(\d x)\,g(x)$ and further assume that there exists $g_0\in\Cloc(\pspace)$ such that $\inf_B g_0>0$ for all bounded $B$ and that $\kernel$ defines a positive Hilbert--Schmidt integral operator on $\Lsp^2(\pspace,\base_{g_0})$.
Given these data, consider the subcritical GMC with base measure $\base$, covariance kernel $\kernel$, and Gaussian input $\sqrt{\aa}\noise$.
If exists, it is a random (in $\noise$) Borel measure on $\pspace$, and we write it as $\gmc(\base,\sqrt{\aa}\noise)=\gmc(\base,\sqrt{\aa}\noise)(\d x)$.
It is common to have unit variance for the Gaussian input and put the $\aa$ dependence in the covariance kernel, but we do it the other way because it is more convenient in the context of polymer measures.
Write $\EE$ for the law of $\noise$.

Under the current assumptions, if it exists, the GMC permits a more explicit expression as the limit of a Doob martingale.
Take any $\varphi:\pspace\to[0,\infty]$ with
\begin{align}
	\label{e.gmc.exp}
	\base^{\otimes 2}_{\varphi}\big[e^{\aa'\kernel}\big]
	:=
	\int_{\pspace^2} \base_{\varphi}^{\otimes 2}\,e^{\aa'\kernel}
	:=
	\int_{\pspace^2} \base^{\otimes 2}\, \varphi^{\otimes 2} e^{\aa'\kernel} <\infty\ ,
	\qquad
	\aa'\in\R\ .
\end{align}
Since $\kernel$ defines a positive operator on $\Lsp^2(\pspace,\base_{g_0})$ and $\inf_{B} g_0>0$ for all bounded $B$, $\kernel$ also defines a positive operator on $\Lsp^2(\pspace,\base_{\varphi})$, which is Hilbert--Schmidt by \eqref{e.gmc.exp}.
Enumerate its (strictly) positive eigenvalues and the corresponding eigenvectors as $\lambda_1,\lambda_2,\ldots$ and $u_1,u_2,\ldots$, and let
\begin{align}
	\label{e.gmck}
	\gmc_n(\base,\sqrt{\aa}\noise)(\d x)
	&:=
	\base(\d x) \, \exp \sum_{i=1}^{n} \Big(
		\sqrt{\aa\lambda_i}\noise_i u_{i}(x) - \frac{1}{2} \aa\lambda_{i} u_i(x)^2
	\Big)\ .
\end{align}
From the randomized shift characterization of GMCs of \cite{shamov2016gaussian}, it is not hard to check that
\begin{align}
	\label{e.gmc.doob}
	\EE\big[\, \gmc(\base,\sqrt{\aa}\noise)[\varphi]\,\big|\,\noise_1,\ldots,\noise_n\big]
	=\gmc_n(\base,\sqrt{\aa}\noise)[\varphi]\ .
\end{align}
Namely, the expression in \eqref{e.gmc.doob} forms a Doob martingale, so in particular
\begin{align}
	\label{e.gmc.mg}
	\gmc(\base,\sqrt{\aa}\noise)[\varphi]
	&= \lim_{n\to\infty} \gmc_n(\base,\sqrt{\aa}\noise)[\varphi]\ ,
	\qquad
	\text{a.s.\ and in }L^1(\PP)\ .
\end{align}
The Doob martingale is also useful for deriving bounds.
Letting $\kernel_n(x,y):=\sum_{i=1}^{n}\lambda_i u_i(x)u_i(y)$ and combining \eqref{e.gmck}--\eqref{e.gmc.doob} with Jensen's inequality give
\begin{align}
	\label{e.gmc.mombd}
	\base_{\varphi}^{\otimes 2} \big[  e^{\aa\kernel_n} \big]
	=
	\EE\big( \gmc_n(\base,\sqrt{\aa}\noise)[\varphi] \big)^2
	\leq
	\EE\big( \gmc(\base,\sqrt{\aa}\noise)[\varphi] \big)^2\ .
\end{align}
Sending $n\to\infty$ recovers the standard second moment formula
\begin{align}
	\label{e.gmc.2ndmom}
	\EE\big( \gmc(\base,\sqrt{\aa}\noise)[\varphi] \big)^2
	= \base_{\varphi}^{\otimes 2}\big[ e^{\aa\kernel} \big]
	:= \base^{\otimes 2}\big[ \varphi^{\otimes 2}\, e^{\aa\kernel} \big]\ .
\end{align}

Let us explain how the above setting can be specialized to the one relevant to this paper.
To this end, take $\pspace=\Pathsp:=\Csp([t_{N-\ell},1];\R^2)$, equip it with the supremum norm, take $\base=\PMm:=\PM^{\theta-\aa}_{[t_{N-\ell},1]}$, and take $\kernel=\inters$, the intersection local time constructed in \cite{clark2025planar}; see also \cite[Section~2.2]{clark2025conditional} for a quick review.
There are many choices for $g_0$, and we take $g_0(\ppath):=\exp(-|\ppath(t_{N-\ell})|)$ for the sake of concreteness.
The base measure $\PMm$ is random, and we take it independently of $\noise$.
Under the current specialization, our assumptions on $\kernel=\inters$ and $g_0$ hold because of Lemma~3.3, Equation~(3.37), and Lemma~A.3 of \cite{clark2025conditional}.
The existence of $\gmc(\PMm,\sqrt{\aa}\noise)$ follows from \cite[Lemma~3.3]{clark2025conditional} and \cite[Theorem~3]{shamov2016gaussian}.
We will further specialize $\varphi$ to the $\Phi_{\delta}$ defined in \eqref{e.lastpiece.decomp}.
Using \eqref{e.quench.bds.c} for $\aa\mapsto\aa'$ and $\theta\mapsto\theta-\aa+\aa'$ shows that \eqref{e.gmc.exp} holds for this specialization.

We now apply the argument of \cite{morenoflores2014positivity} to prove the following lower tail bound.
\begin{lem}\label{l.morenoflores}
Setup as above. 
Letting
$
	\rho := \base_{\varphi}^{\otimes 2} [e^{\aa\kernel}] / \base[\varphi]^2
$
and
$
	\rho' := \base_{\varphi}^{\otimes 2} [\kernel e^{\aa\kernel}] / \base[\varphi]^2
$,
we have that
\begin{align}
	\label{e.morenoflores.}
	\PP\Big[ \,\gmc(\base,\sqrt{\aa}\noise)[\varphi] < e^{-r} \base[\varphi] \,\Big]
	\leq
	2\exp\Big( - \frac{1}{2^7\aa\rho\rho'}\big( r - \log 2 - \sqrt{2^7\aa\rho\rho'\log 2^4\rho}\big)_+^2 \Big)\ .
\end{align}
\end{lem}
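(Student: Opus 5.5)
We will follow the Moreno-Flores argument: a Paley--Zygmund lower bound on a good set of $\noise$, then Talagrand/Gaussian concentration for a convex Lipschitz functional. Write $X_n:=\gmc_n(\base,\sqrt{\aa}\noise)[\varphi]$, a function of finitely many Gaussians. Since \eqref{e.gmc.mg} gives $X_n\to X:=\gmc(\base,\sqrt{\aa}\noise)[\varphi]$ a.s., it suffices to prove \eqref{e.morenoflores.} for $X_n$ uniformly in $n$ (with $\rho$, $\rho'$ replaced by their $\kernel_n$ versions, which are bounded by $\rho,\rho'$ up to constants) and then pass to the limit via Fatou or a Portmanteau argument on the open event $\{X<e^{-r}\base[\varphi]\}$.

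\emph{Step 1 (Paley--Zygmund).} We have $\EE X_n=\base[\varphi]$, and by \eqref{e.gmc.mombd}, $\EE X_n^2\le\rho\,\base[\varphi]^2$. Paley--Zygmund then gives $\PP[X_n\ge \tfrac12\base[\varphi]]\ge 1/(4\rho)$.

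\emph{Step 2 (good set).} Consider the tilted probability measure $\mu_\noise(\d x)\propto \base_\varphi(\d x)\exp(\sum_{i\le n}(\sqrt{\aa\lambda_i}\noise_i u_i-\tfrac12\aa\lambda_iu_i^2))$ and the quantity
\[
Q(\noise):=\mu_\noise^{\otimes 2}[\kernel_n]\cdot X_n^2/\base[\varphi]^2=\base_\varphi^{\otimes 2}[\kernel_n\,\mathcal E\otimes\mathcal E]/\base[\varphi]^2,
\]
where $\mathcal E$ is the exponential weight. Its mean is $\base_\varphi^{\otimes2}[\kernel_n e^{\aa\kernel_n}]/\base[\varphi]^2\le\rho'$ by the Cameron--Martin computation. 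Markov's inequality gives $\PP[Q>8\rho\rho']\le 1/(8\rho)$. Hence the set $A:=\{X_n\ge\tfrac12\base[\varphi],\ Q\le 8\rho\rho'\}$ has $\PP[A]\ge 1/(8\rho)$. On $A$, $\mu_\noise^{\otimes2}[\kernel_n]\le 32\rho\rho'$.

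\emph{Step 3 (concentration).} The function $F(\noise):=\log X_n$ has gradient $\nabla_i F=\sqrt{\aa\lambda_i}\,\mu_\noise[u_i]$, so $|\nabla F|^2=\aa\,\mu_\noise^{\otimes2}[\kernel_n]$, and $F$ is convex (log of a sum of exponentials of linear functions). On $A$ we have $F\ge\log\base[\varphi]-\log2$ and $|\nabla F|^2\le 2^5\aa\rho\rho'$. By convexity, $F(\noise')\ge F(\noise)+\nabla F(\noise)\cdot(\noise'-\noise)\ge F(\noise)-|\nabla F(\noise)|\,d(\noise',A)$ for $\noise\in A$ is not directly usable; instead we use Talagrand's convex-distance inequality for Gaussians: for $\noise'$ at Euclidean distance $d$ from $A$, pick the nearest $\noise\in A$ and apply the supporting-hyperplane bound to get $F(\noise')\ge\log\base[\varphi]-\log2-\sqrt{2^5\aa\rho\rho'}\,d$. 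Gaussian isoperimetry gives $\PP[d(\noise',A)>t+\sqrt{2\log(1/\PP[A])}]\le e^{-t^2/2}$. Setting $\sqrt{2^5\aa\rho\rho'}\,(t+\sqrt{2\log 8\rho})=r-\log2$ and adjusting constants (the factor $2$ and the $2^7$, $2^4$) yields \eqref{e.morenoflores.}.

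\emph{Main obstacle.} The delicate point is Step 3: the gradient bound holds only on $A$, while concentration needs to be controlled at the boundary point. The convexity-tangent trick handles this, and choosing constants to match exactly $2^7$ and $\log 2^4\rho$ is bookkeeping. A secondary issue is justifying the passage $n\to\infty$, which requires the uniform bound $\kernel_n\le$ a constant times $\kernel$ on average; $\EE e^{\aa\kernel_n}$-type monotonicity via \eqref{e.gmc.mombd} should suffice.
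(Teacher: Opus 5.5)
Your route is the paper's. Your set $A$ is contained in the paper's $A(\alpha_*)$, $\alpha_*=2^5\rho\rho'$, and it gets the same lower bound $1/(8\rho)$ on $\PP[A]$ from the same Paley--Zygmund and Markov estimates. The paper's Jensen step is exactly your tangent-plane inequality for the convex function $\log X_n$ at a point of $A$. The final input is Gaussian concentration of the $1$-Lipschitz function $d(\cdot,A)$; your isoperimetric form of it even gives $2^6$ and $\log 8\rho$ in place of $2^7$ and $\log 2^4\rho$. Your Fatou argument for the left-hand side, and $\rho_n\le\rho$ from \eqref{e.gmc.mombd}, are fine.

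Step 3 needs some clarification:
\begin{itemize}
\item The convexity bound you call ``not directly usable'' is precisely what you, and the paper, then use. The gradient is only ever evaluated at points of $A$, so there is no boundary issue.
\item The tool is Gaussian concentration/isoperimetry, not Talagrand's convex-distance inequality.
\item Take a supremum over $\noise\in A$ rather than a nearest point, since $A$ need not be closed.
\end{itemize}

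The step that fails as written is $\EE Q\le\rho'$ in Step 2. The Cameron--Martin computation gives $\EE Q=\rho'_n:=\base_\varphi^{\otimes2}[\kernel_n e^{\aa\kernel_n}]/\base[\varphi]^2$, and nothing you invoke compares this with $\rho'$. \eqref{e.gmc.mombd} gives monotonicity of $\base_\varphi^{\otimes2}[e^{\aa\kernel_n}]$ only. Since $\kernel_n$ is not pointwise dominated by $\kernel$, there is no pointwise comparison between $\kernel_n e^{\aa\kernel_n}$ and $\kernel e^{\aa\kernel}$. A comparison ``up to constants'' would not reproduce the stated constants anyway.

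The paper avoids any comparison. It proves the $n$-level bound with $\rho_n,\rho'_n$ (your Steps 1--3 verbatim with these in place of $\rho,\rho'$), and then shows $\rho_n\to\rho$ and $\rho'_n\to\rho'$ using three facts:
\begin{itemize}
\item $\kernel_n\to\kernel$ in $\base_\varphi^{\otimes2}$-measure;
\item the uniform bound $\sup_n\base_\varphi^{\otimes2}[e^{2\aa\kernel_n}]\le\base_\varphi^{\otimes2}[e^{2\aa\kernel}]<\infty$, from \eqref{e.gmc.mombd}--\eqref{e.gmc.2ndmom} with $2\aa$;
\item Cauchy--Schwarz together with $\base_\varphi^{\otimes2}[(\kernel_n-\kernel)^2]=\sum_{i>n}\lambda_i^2\to0$.
\end{itemize}
The right-hand side of \eqref{e.morenoflores.} is continuous in $(\rho,\rho')$, so this, combined with your Fatou step, closes the proof. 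Otherwise you would need a genuine proof that $\rho'_n\le\rho'$.
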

\noindent%
This lemma implies \eqref{e.morenoflores} by specializing the setup as described above.
\begin{proof}
To simplify notation, we take $\aa=1$ and write $\gmc(\base,\noise)=\gmc(\noise)$ and $\gmc_n(\base,\noise)=\gmc_n(\noise)$.

Let us explain how the desired result follows from its approximate version.
Recall $\kernel_n$ from before \eqref{e.gmc.mombd} and let $\rho_n$ and $\rho'_n$ be obtained by replacing the $\kernel$ in $\rho$ and $\rho'$ by $\kernel_n$.
We seek to prove that
\begin{align}
	\label{e.morenoflores.n}
	\PP\Big[ \,\gmc_n(\noise)[\varphi] < e^{-r} \base[\varphi] \,\Big]
	\leq
	2\exp\Big( - \frac{1}{2^7\rho_n\rho'_n}\big( r - \log 2 - \sqrt{2^7\rho_n\rho'_n\log 2^4\rho_n}\big)_+^2 \Big)\ .
\end{align}
Indeed, sending $n\to\infty$ turns the left hand side of \eqref{e.morenoflores.n} to that of \eqref{e.morenoflores.}.
The right hand side also converges, which amounts to saying that $\base_{\varphi}^{\otimes 2} [e^{\aa\kernel_n}]\to \base_{\varphi}^{\otimes 2} [e^{\aa\kernel}]$ and $\base_{\varphi}^{\otimes 2} [\kernel_n e^{\kernel_n}]\to \base_{\varphi}^{\otimes 2} [\kernel e^{\kernel}]$.
To see why, first note that $\kernel_n\to\kernel$ in measure wrt $\base_{\varphi}^{\otimes 2}$, so the analogous convergence holds for $e^{\kernel_n}$ and $\kernel_ne^{\kernel_n}$.
To upgrade this to convergence in $\Lsp^1(\pspace^2,\base_{\varphi}^{\otimes 2})$, combine \eqref{e.gmc.mombd}--\eqref{e.gmc.2ndmom} for $\aa\mapsto 2\aa$ to deduce that $\sup_n \base_{\varphi}^{\otimes 2}|e^{\kernel_n}|^2<\infty$, and apply the Cauchy--Schwarz inequality to the difference $\kernel_n e^{\kernel_n} - \kernel e^\kernel$ with the aid of $\base_{\varphi}^{\otimes 2}[(\kernel_n-\kernel)^2]=\sum_{i=n+1}^\infty \lambda_i^2 \to 0$.

To prove \eqref{e.morenoflores.n}, since $\gmc_n(\noise)$ only depends on the first $n$ entries of $\noise$, slightly abusing notation, we write $\noise=(\noise_1,\dots,\noise_n)$ for the rest of the proof.

As the first step toward proving \eqref{e.morenoflores.n}, we derive a useful lower bound.
Write $\overline{\gmc}_{n,\varphi}(\noise)(\d x):=\gmc_n(\noise)(\d x)\,\varphi(x)/\gmc_n(\noise)[\varphi]$ for the normalized measure, let
\begin{align}
	\label{e.morenoflores.A}
	A_1 := \big\{ \noise\in\R^n \, \big| \, \gmc_n(\noise)[\varphi] \geq \tfrac{1}{2} \base[\varphi] \big\}\ ,
	&&
	A_2(\alpha) := \big\{ \noise\in\R^n \, \big| \, 
		\overline{\gmc}_{n,\varphi}(\noise)^{\otimes 2}[\kernel_n] \leq \alpha
	\big\}\ ,
\end{align}
and $A(\alpha) := A_1 \cap A_2(\alpha)$, express
$\gmc_n(\noise)[\varphi]$ as follows
\begin{align}
	\gmc_n(\noise)[\varphi]
	=
	\overline{\gmc}_{n,\varphi}(\noise')\Big[ 
		\exp\sum_{i=1}^n (\noise_i-\noise'_i)\sqrt{\lambda_i} u_i
	\Big]
	\cdot \gmc_n(\noise')[\varphi]\ ,
\end{align}
apply Jensen's inequality wrt $\overline{\gmc}_{n,\varphi}(\noise')$, and apply the Cauchy--Schwarz inequality to the sum in the result.
Write $|\cdott|$ for the Euclidean norm on $\R^n$.
The preceding procedures give
\begin{align}
	\gmc_n(\noise)[\varphi]
	\geq
	\exp\Big( 
		-|\noise-\noise'| 
		\Big(\sum_{i=1}^n \lambda_i \,\big( \overline{\gmc}_{n,\varphi}(\noise')[u_i] \big)^2 \Big)^{1/2} 
	\Big)
	\cdot \gmc_n(\noise')[\varphi]\ .
\end{align}
Write $d(\noise,A):=\inf\{ |\noise-\noise'| \ | \, \noise'\in A \}$ for the Euclidean distance from $\noise$ to the set $A$.
Recognize the last sum as $\overline{\gmc}_{n,\varphi}(\noise')^{\otimes 2}[\kernel_n]$, use \eqref{e.morenoflores.A} for $\noise\mapsto\noise'$, and take the supremum of the resulting inequality over $\noise'\in A(\alpha)$.
Doing so gives the lower bound
\begin{align}
	\label{e.morenoflores.step1}
	\gmc_n(\noise)[\varphi] \geq \tfrac{1}{2} \base[\varphi] \, e^{-\sqrt{\alpha}\,d(\noise,A(\alpha))}\ .
\end{align}

Next, we derive a lower bound on $\PP[A(\alpha)]$.
Given that $\EE\,\gmc_n(\noise)[\varphi]=\base[\varphi]$, the Paley--Zygmund inequality yields
\begin{align}
	\label{e.morenoflores.A1bd}
	\PP[A_1]
	\geq
	\frac{\base[\varphi]^2}{4\EE\,(\gmc_n(\noise)[\varphi])^2}
	=
	\frac{\base[\varphi]^2}{4\base^{\otimes 2}_{\varphi}[e^{\kernel_n}]}
	=
	\frac{1}{4\rho_n}\ ,
\end{align}
where the first equality is readily checked from \eqref{e.gmck}.
Next, observing that the event $A_2(\alpha)^\compl \cap A_1$ implies $ \gmc_{n}^{\otimes 2}[\varphi^{\otimes 2} \kernel_n] \geq \alpha \base[\varphi]^2 / 4 $, we use Markov's inequality to write 
\begin{align}
	\label{e.morenoflores.A2bd}
	\PP[A_2(\alpha)^\compl \cap A_1]
	\leq
	\frac{4 \EE\,\gmc_{n}^{\otimes 2}[\varphi^{\otimes 2} \kernel_n]}{\alpha \base[\varphi]^2}
	=
	\frac{4\base^{\otimes 2}_{\varphi}[\kernel_ne^{\kernel_n}] }{ \alpha\base[\varphi]^2}
	=
	\frac{4\rho'_n}{\alpha}\ ,	
\end{align}
where again the first equality is readily checked from \eqref{e.gmck}.
Combining \eqref{e.morenoflores.A1bd}--\eqref{e.morenoflores.A2bd} and substituting $\alpha=2^{5}\rho_n\rho'_n$ give
\begin{align}
	\label{e.morenoflores.Abd}	
	\PP[A(\alpha_*)]
	\geq
	\frac{1}{4\rho_n} - \frac{4\rho'_n}{\alpha_*}
	=
	\frac{1}{2^3\rho_n}\ ,
	\qquad
	\alpha_* := 2^{5}\rho_n\rho'_n\ .
\end{align}

The proof is completed by using \eqref{e.morenoflores.step1} to obtain
$
	\PP[ \,\gmc_n(\noise)[\varphi] < e^{-r} \base[\varphi]\, ]
	\leq
	\PP[ d(\noise,A(\alpha_*)) \geq (r - \log 2)/\sqrt{\alpha_*}]
$ and
combining it with the Gaussian concentration inequality \cite[Lemma~2.2.11]{talagrand2003spin}
\begin{align}
	\PP \big[ d(\noise,A(\alpha_*)) \geq u + 2\sqrt{\log 2/\PP[A(\alpha_*)] } \, \big] \leq 2e^{-u^2/4}
\end{align}
for $u=((r-\log 2)/\sqrt{\alpha_*}-2\sqrt{\log 2^4\rho_n})_+$ with the aid of \eqref{e.morenoflores.Abd}.
\end{proof}

\bibliographystyle{alpha}
\bibliography{shf}

\end{document}